\newcommand{\sumprime}{\if@display\sideset{}{'}\sum%
            \else\sum'\fi}
\begin{document}

\numberwithin{equation}{section}

% define theorem environments
\newtheorem{theorem}{Theorem}[section]
\newtheorem{proposition}[theorem]{Proposition}
\newtheorem{conjecture}[theorem]{Conjecture}
\def\theconjecture{\unskip}
\newtheorem{corollary}[theorem]{Corollary}
\newtheorem{lemma}[theorem]{Lemma}
\newtheorem{observation}[theorem]{Observation}
\newtheorem{definition}{Definition}
\numberwithin{definition}{section} %\def\thedefinition{\unskip}
\newtheorem{remark}{Remark}
\def\theremark{\unskip}
\newtheorem{question}{Question}
\def\thequestion{\unskip}
\newtheorem{example}{Example}
\def\theexample{\unskip}
\newtheorem{problem}{Problem}

\def\vvv{\ensuremath{\mid\!\mid\!\mid}}
\def\intprod{\mathbin{\lr54}}
\def\reals{{\mathbb R}}
\def\integers{{\mathbb Z}}
\def\N{{\mathbb N}}
\def\complex{{\mathbb C}\/}
\def\dist{\operatorname{dist}\,}
\def\spec{\operatorname{spec}\,}
\def\interior{\operatorname{int}\,}
\def\trace{\operatorname{tr}\,}
\def\cl{\operatorname{cl}\,}
\def\essspec{\operatorname{esspec}\,}
\def\range{\operatorname{\mathcal R}\,}
\def\kernel{\operatorname{\mathcal N}\,}
\def\dom{\operatorname{Dom}\,}
\def\linearspan{\operatorname{span}\,}
\def\lip{\operatorname{Lip}\,}
\def\sgn{\operatorname{sgn}\,}
\def\Z{ {\mathbb Z} }
\def\e{\varepsilon}
\def\p{\partial}
\def\rp{{ ^{-1} }}
\def\Re{\operatorname{Re\,} }
\def\Im{\operatorname{Im\,} }
\def\dbarb{\bar\partial_b}
\def\eps{\varepsilon}
\def\O{\Omega}
\def\Lip{\operatorname{Lip\,}}

\def\Hs{{\mathcal H}}
\def\E{{\mathcal E}}
\def\scriptu{{\mathcal U}}
\def\scriptr{{\mathcal R}}
\def\scripta{{\mathcal A}}
\def\scriptc{{\mathcal C}}
\def\scriptd{{\mathcal D}}
\def\scripti{{\mathcal I}}
\def\scriptk{{\mathcal K}}
\def\scripth{{\mathcal H}}
\def\scriptm{{\mathcal M}}
\def\scriptn{{\mathcal N}}
\def\scripte{{\mathcal E}}
\def\scriptt{{\mathcal T}}
\def\scriptr{{\mathcal R}}
\def\scripts{{\mathcal S}}
\def\scriptb{{\mathcal B}}
\def\scriptf{{\mathcal F}}
\def\scriptg{{\mathcal G}}
\def\scriptl{{\mathcal L}}
\def\scripto{{\mathfrak o}}
\def\scriptv{{\mathcal V}}
\def\frakg{{\mathfrak g}}
\def\frakG{{\mathfrak G}}

\def\ov{\overline}

\thanks{Research supported by the Key Program of NSFC No. 11031008.}

\address{Department of Applied Mathematics, Tongji University, Shanghai, 200092, China}
 \email{boychen@tongji.edu.cn}

\title{Weighted Bergman spaces and the $\bar{\partial}-$equation}
\author{Bo-Yong Chen}
\date{}
\maketitle

\bigskip

\centerline{\it Dedicated to Professor Jinhao Zhang on the occasion of his seventieth birthday}

\bigskip

\begin{abstract} We give a H\"ormander type $L^2-$estimate for the $\bar{\partial}-$equation with respect to the measure $\delta_\Omega^{-\alpha}dV$, $\alpha<1$, on any bounded pseudoconvex domain with $C^2-$boundary. Several applications to the function theory of weighed Bergman spaces $A^2_\alpha(\Omega)$ are given, including a corona type theorem, a Gleason type theorem,  together with a density theorem.  We investigate in particular the boundary behavior of functions in $A^2_\alpha(\Omega)$ by proving an analogue of the Levi problem for  $A^2_\alpha(\Omega)$ and giving an optimal Gehring type estimate for functions in $A^2_\alpha(\Omega)$. A vanishing theorem for $A^2_1(\Omega)$ is established for arbitrary bounded domains. Relations between the weighted Bergman kernel and the Szeg\"o kernel are also discussed.

\bigskip

\noindent{{\sc Mathematics Subject Classification} (2000): 32A25; 32A36; 32A40; 32W05.}

\smallskip

\noindent{{\sc Keywords}: $\bar{\partial}-$equation; $L^2-$estimate; Bergman space; Weighted Bergman kernel; Szeg\"o kernel.}
\end{abstract}

\section{Introduction}

Let $\Omega\subset {\mathbb C}^n$ be a pseudoconvex domain and let $\varphi$ be a $C^2$ plurisubharmonic (psh) function on $\Omega$. A fundamental theorem of H\"ormander (cf. \cite{Hormander65,Hormander90}, see also \cite{AndreottiVesentini65, Demailly82})  states that for any $\bar{\partial}-$closed $(0,1)-$form $v$, there exists a solution $u$ to the equation $\bar{\partial}u=v$ such that
\begin{equation}\label{eq:cor1}
\int_\Omega |u|^2 e^{-\varphi} dV\le \int_\Omega |v|^2_{i\partial\bar{\partial}\varphi} e^{-\varphi} dV
\end{equation}
provided the right-hand side is finite.

In 1983, Donnelly-Fefferman \cite{DonnellyFefferman83} made a striking discovery that under certain condition, the $\bar{\partial}-$equation may have solutions of finite $L^2-$norm with some {\it non-psh} weight. Such a discovery was extended and simplified substantially by a number of mathematicians (see e.g. \cite{DiederichOhsawa95,Berndtsson96,BerndtssonCharpentier00,McNeal96,BoasStraube99}), now may be formulated as follows: if $\psi$ is another $C^2$ psh function on $\Omega$ satisfying $i\alpha\partial\bar{\partial}\psi\ge i \partial\psi\wedge\bar{\partial}\psi$ for some $0<\alpha<1$, then the $L^2(\Omega,\varphi)-$minimal solution of the $\bar{\partial}-$equation enjoys the estimate
\begin{equation}\label{eq:cor2}
\int_\Omega |u|^2 e^{\psi-\varphi} dV\le {\rm const}_\alpha \int_\Omega |v|^2_{i\partial\bar{\partial}(\varphi+\psi)} e^{\psi-\varphi} dV
\end{equation}
provided the right-hand side is finite. In particular, if we take $\psi=-\frac{\alpha}{\alpha_0}\log(-\rho)$, where $\rho$ is a negative $C^2$ psh function verifying $-\rho\asymp \delta_\Omega^{\alpha_0}$, $\alpha_0>\alpha>0$ and $\delta_\Omega$ is the boundary distance function, then \eqref{eq:cor2} implies
 \begin{equation}\label{eq:cor3}
 \int_\Omega |u|^2e^{-\varphi}\delta_\Omega^{-\alpha}dV\le {\rm const}_{\alpha,\Omega}\int_\Omega |v|^2_{i\partial\bar{\partial}\varphi}e^{-\varphi}\delta_\Omega^{-\alpha}dV,
 \end{equation}
 which has significant applications in the study of regularities of the Bergman projection (cf. \cite{BerndtssonCharpentier00}, see also \cite{MichelShaw01}).
 In case $\Omega$ has a $C^2-$boundary, Diederich-Forn{\ae}ss \cite{DiederichFornaess77} proved the existence of such a $\rho$, where $\alpha_0$ is called a Diederich-Forn{\ae}ss exponent. On the other side, there are pseudoconvex domains  (so-called worm domains) whose Diederich-Forn{\ae}ss exponents are arbitrarily small (cf. \cite{DiederichFornaess77MathAnn}).

 In this paper, we shall proving the following

\begin{theorem}\label{th:main}
  Let $\Omega\subset\subset {\mathbb C}^n$ be a pseudoconvex domain with $C^2-$boundary and $\varphi$ a $C^2$ psh function on $\Omega$. Then for each $\alpha<1$ and each $\bar{\partial}-$closed $(0,1)-$form $v$ with $
\int_\Omega |v|^2_{i\partial\bar{\partial}\varphi}e^{-\varphi}\delta_\Omega^{-\alpha}dV<\infty,
$
 there is a solution $u$ to the equation  $\bar{\partial}u=v$ such that \eqref{eq:cor3} holds.
 \end{theorem}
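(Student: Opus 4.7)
The main obstacle to extending \eqref{eq:cor3} to the full range $\alpha<1$ is that its usual derivation from \eqref{eq:cor2} employs a psh defining function $\rho$ with $-\rho\asymp\delta_\Omega^{\alpha_0}$ of Diederich--Forn\ae ss type, which restricts the exponent to $\alpha<\alpha_0$. Since $\alpha_0$ can be made arbitrarily small (worm domains), this approach is inadequate. My plan is instead to work directly with $\delta_\Omega$ itself, exploiting two facts: on a $C^2$ pseudoconvex domain, $\delta_\Omega$ is $C^{1,1}$ in a collar of $b\Omega$, and $-\log\delta_\Omega$ is plurisubharmonic on all of $\Omega$.

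After the standard smoothing reductions---approximating $\varphi$ by smooth psh functions, $v$ by smooth $\bar\partial$-closed forms, and exhausting $\Omega$ by smoothly bounded pseudoconvex subdomains on which $\delta_\Omega$ (suitably regularized) is smooth---I would aim for the a priori inequality
\[
 \|u\|_{\varphi,\alpha}^{2} \leq \frac{C}{(1-\alpha)^2}\,\|\bar\partial^{*}u\|_{\varphi,\alpha}^{2}, \qquad u\in\dom\bar\partial^{*}\cap(\ker\bar\partial)^{\perp},
\]
in the weighted space $L^2(\Omega,e^{-\varphi}\delta_\Omega^{-\alpha}dV)$. The tool is Berndtsson's twisted Bochner--Kodaira--Nakano identity, applied with a twisting function $\tau=\tau(\delta_\Omega)$. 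The plurisubharmonicity of $\varphi$ gives a nonnegative curvature term, while differentiating the singular factor $\delta_\Omega^{-\alpha}$ produces a \emph{bad} contribution of size $\alpha^2|\partial\delta_\Omega|^2/\delta_\Omega^2$; the plurisubharmonicity of $-\log\delta_\Omega$ supplies a \emph{good} term of the same order $|\partial\delta_\Omega|^2/\delta_\Omega^2$, which after an appropriate twist dominates the bad term precisely when $\alpha<1$. A natural candidate is $\tau=1+s(-\log\delta_\Omega)$ for a small $s=s(\alpha)>0$, or $\tau=(-\rho)^{-c}$ with $\rho$ any Diederich--Forn\ae ss defining function and $c$ small; the exact form is fixed by a numerical inequality of Hardy type.

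With this a priori estimate in hand, the existence of a solution $u$ to $\bar\partial u=v$ satisfying \eqref{eq:cor3} follows by the classical duality argument: apply Riesz representation to the bounded linear functional $w\mapsto\langle v,w\rangle_{\varphi,\alpha}$ on $(\ker\bar\partial)^{\perp}$, then extract a weak limit of solutions on the exhausting subdomains.

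The principal obstacle is the selection of the twist $\tau$ and the execution of the corresponding numerical computation: the inequality must balance the pseudoconvexity-induced positivity of $i\partial\bar\partial(-\log\delta_\Omega)$ against the loss from the singular weight, with a constant that blows up only as $\alpha\to 1$ and is \textbf{independent of the Diederich--Forn\ae ss exponent of $\Omega$}. This is where the hypothesis $\alpha<1$ is sharp, corresponding to the Hardy endpoint. A secondary, technical obstacle is justifying the Bochner--Kodaira computation at interior points where $\delta_\Omega$ fails to be $C^2$ (points with more than one nearest boundary point); this should be handled by a standard convolution regularization of $\delta_\Omega$ away from $b\Omega$, combined with restricting the delicate integrations by parts to the $C^{1,1}$ collar neighborhood of $b\Omega$ where $\delta_\Omega$ is genuinely $C^{1,1}$.
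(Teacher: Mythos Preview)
Your approach has a genuine gap at its central step. You claim that the plurisubharmonicity of $-\log\delta_\Omega$ ``supplies a good term of the same order $|\partial\delta_\Omega|^2/\delta_\Omega^2$, which after an appropriate twist dominates the bad term precisely when $\alpha<1$.'' But plurisubharmonicity of $-\log\delta_\Omega$ only gives $i\partial\bar\partial(-\log\delta_\Omega)\ge 0$; it does \emph{not} give
\[
 i\partial\bar\partial(-\log\delta_\Omega)\ \ge\ c\, i\,\partial(-\log\delta_\Omega)\wedge\bar\partial(-\log\delta_\Omega)
\]
for any $c>0$. In fact, for a $C^2$ defining function $r$ one checks that this inequality with constant $c=\alpha_0$ is \emph{equivalent} (up to bounded corrections) to $-(-r)^{\alpha_0}$ being psh, i.e.\ to $\alpha_0$ being a Diederich--Forn\ae ss exponent. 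Thus the Donnelly--Fefferman condition for $\psi=-\alpha\log\delta_\Omega$ reduces to the global DF exponent exceeding $\alpha$, which is exactly the restriction you set out to avoid. Concretely, at a weakly pseudoconvex boundary point the cross terms $\partial\bar\partial\delta_\Omega(X_T,\bar\nu)$ produce an error of order $1/\delta_\Omega$ on vectors $X=X_T+\varepsilon\nu$ with small normal component; this error is neither bounded nor controlled by $i\partial\log\delta_\Omega\wedge\bar\partial\log\delta_\Omega$, so neither $\tau=1+s(-\log\delta_\Omega)$ nor an added $\tau|z|^2$ absorbs it.

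The idea you are missing, and which the paper exploits, is that the Diederich--Forn\ae ss exponent can be taken arbitrarily close to $1$ \emph{locally} even when it is globally small. The paper covers $\partial\Omega$ by finitely many open sets $U_j$, on each of which there is a psh $\rho_j<0$ with $-\rho_j\asymp\delta_\Omega^{(\alpha+1)/2}$; it then applies the twisted Morrey--Kohn--H\"ormander inequality to $\chi_j w$ for a partition of unity $\{\chi_j\}$, with twisting factor $\eta=e^{-\tilde\chi_j\psi_j}$, $\psi_j=-\tfrac{2\alpha}{\alpha+1}\log(-\rho_j)$. The commutators $[\bar\partial,\chi_j]$ and $[\bar\partial^*_{\varphi_\tau},\chi_j]$ introduce only \emph{bounded} error terms (they live on $\operatorname{supp}\bar\partial\chi_j$, away from the boundary singularity), and these are absorbed by replacing $\varphi$ with $\varphi_\tau=\varphi+\tau|z|^2$ for $\tau$ large. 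This localization step is the heart of the argument; without it the obstruction you note in your final paragraph---independence of the constant from the global DF exponent---cannot be overcome by any choice of global twist built from $\delta_\Omega$ alone.
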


We shall give various applications of this result to the function theory of the weighted Bergman space $A^2_\alpha(\Omega)$, that is, the Hilbert space of holomorphic functions $f$ on $\Omega$ with
 $$
 \|f\|^2_\alpha:=\int_\Omega |f|^2 \delta_\Omega^{-\alpha} dV<\infty.
 $$
 The spaces $A_\alpha^2(\Omega)$ coincide with the usual Sobolev spaces of holomorphic functions for $\alpha<1$, i.e.,
 $$
 A_\alpha^2(\Omega)={\mathcal O}(\Omega)\cap W^\alpha(\Omega)
 $$
(see Ligocka \cite{LigockaHarmonic}). Despite of deep results achieved for strongly pseudoconvex domains (see e.g., \cite{Beatrous85,Englis08}), few progress has been made in the case of weakly pseudoconvex domains.

\begin{theorem}\label{th:corona}
Let $\Omega\subset\subset {\mathbb C}^n$ be a pseudoconvex domain with $C^2-$boundary. Let $f_1,f_2\in {\mathcal O}(\Omega)$ and $\delta>0$ be such that
$$
\delta^2\le |f_1|^2+|f_2|^2\le 1.
$$
Then for each $h\in A^2_\alpha(\Omega)$, $\alpha<1$, there are functions $g_1,g_2\in A^2_\alpha(\Omega)$ satisfying
$$
f_1g_1+f_2g_2=h.
$$
\end{theorem}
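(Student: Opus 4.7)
The plan is to reduce the division problem to a $\dbar$-equation via the Koszul trick and apply Theorem~\ref{th:main} with the plurisubharmonic weight $\varphi := 2\log|F|^2$, where $|F|^2 := |f_1|^2+|f_2|^2$.

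First I would set $g_j^{(0)} := \bar f_j\, h/|F|^2$. These satisfy $f_1 g_1^{(0)} + f_2 g_2^{(0)} = h$ but are not holomorphic. I then seek holomorphic corrections of the form $g_1 := g_1^{(0)} - f_2 u$, $g_2 := g_2^{(0)} + f_1 u$; these preserve the corona identity for any scalar $u$, and become holomorphic provided $\dbar u = w$, where $w$ is the unique $(0,1)$-form satisfying $f_2 w = \dbar g_1^{(0)}$ and $-f_1 w = \dbar g_2^{(0)}$. Existence of $w$ is guaranteed by the consistency $f_1\dbar g_1^{(0)} + f_2\dbar g_2^{(0)} = \dbar h = 0$, and a short calculation yields
\[
w \;=\; \frac{h\,\eta}{|F|^4},\qquad \eta := \bar f_2\,\dbar\bar f_1 - \bar f_1\,\dbar\bar f_2.
\]
A routine check gives $\dbar w = 0$ on $\Omega$: for each $j$, $f_j\dbar w = \dbar(f_j w) = \pm\dbar^2 g_k^{(0)} = 0$, and since $|F|^2 > 0$ everywhere, $\dbar w = 0$ globally.

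The technical core is the pointwise estimate
\[
|w|^2_{i\partial\dbar\varphi}\,e^{-\varphi}\;\leq\;\frac{|h|^2}{2|F|^8}.
\]
On $\{f_2\neq 0\}$, setting $\tau := f_1/f_2$, we have $\log|F|^2 = \log|f_2|^2 + \log(1+|\tau|^2)$, so
\[
i\partial\dbar\log|F|^2 \;=\; \frac{i\,\partial\tau\wedge\dbar\bar\tau}{(1+|\tau|^2)^2},
\]
a rank-one positive semidefinite $(1,1)$-form. Working in unitary coordinates that diagonalize $\partial\tau$ at the point, one verifies the normalization $|c\,\dbar\bar\tau|^2_{i\partial\tau\wedge\dbar\bar\tau/(1+|\tau|^2)^2} = |c|^2(1+|\tau|^2)^2$. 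Since $\eta = \bar f_2^{\,2}\,\dbar\bar\tau$ and $(1+|\tau|^2)|f_2|^2 = |F|^2$, this gives $|\eta|^2_{i\partial\dbar(2\log|F|^2)} = \tfrac12|F|^4$; multiplying by $|h|^2/|F|^8$ and by $e^{-\varphi} = |F|^{-4}$ produces the claimed bound. The same expression arises on $\{f_1\neq 0\}$ by symmetry, so the inequality is global.

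Applying Theorem~\ref{th:main}, and using $|F|^2 \geq \delta^2$ on the right-hand side, produces a solution $u$ of $\dbar u = w$ with $\int_\Omega |u|^2 e^{-\varphi}\delta_\Omega^{-\alpha}\,dV \leq (2\delta^8)^{-1}\|h\|_\alpha^2$. Because $|F|^2 \leq 1$ gives $e^{-\varphi} = |F|^{-4} \geq 1$, we get $u \in L^2(\Omega, \delta_\Omega^{-\alpha}dV)$. Combined with $|g_j^{(0)}| \leq |h|/\delta$, this yields $g_j = g_j^{(0)} \mp f_{3-j}\, u \in A^2_\alpha(\Omega)$ satisfying $f_1 g_1 + f_2 g_2 = h$. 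The main obstacle is the rank-one curvature calculation: the identity $|\eta|^2_{i\partial\dbar(2\log|F|^2)} = \tfrac12|F|^4$, with no dependence on derivatives of $f_1, f_2$, is what drives the whole scheme, and reflects the precise compatibility of the obstruction form $\eta$ with the Fubini--Study-type curvature of $\log|F|^2$.
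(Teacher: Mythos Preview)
Your proof is correct and follows essentially the same Wolff--Koszul approach as the paper; the only cosmetic difference is that the paper takes $\varphi=\log|F|^2$ rather than $2\log|F|^2$, which yields the slightly sharper dependence $\delta^{-6}$ in place of your $\delta^{-8}$. One small omission: the estimate coming from Theorem~\ref{th:main} carries the multiplicative constant ${\rm const}_{\alpha,\Omega}$, which should be restored in your final bound on $\int_\Omega |u|^2 e^{-\varphi}\delta_\Omega^{-\alpha}\,dV$.
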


\begin{theorem}\label{th:gleason}
Let $\Omega\subset\subset {\mathbb C}^2$ be a pseudoconvex domain with $C^2-$boundary. If $w\in \Omega$ and $h\in A^2_\alpha(\Omega)$, $\alpha<1$, then there are functions $g_1,g_2\in A^2_\alpha(\Omega)$ satisfying $$
h(z)-h(w)=(z_1-w_1)g_1(z)+(z_2-w_2)g_2(z),\ \ \ \forall\,z\in \Omega.
$$
\end{theorem}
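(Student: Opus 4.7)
The plan is to reduce the Gleason decomposition to a single scalar $\bar{\partial}$-equation, using the Koszul complex of the regular sequence $(z_1 - w_1,\ z_2 - w_2)$ in $\mathbb{C}^2$, and then invoke Theorem~\ref{th:main}. Throughout, write $\phi_i(z) = z_i - w_i$ and $|\phi|^2 = |\phi_1|^2 + |\phi_2|^2$.

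First I would construct a smooth (non-holomorphic) global decomposition. Fix a ball $B = B(w,r) \subset\subset \Omega$; the path integrals
\[
\tilde g_i(z) \;:=\; \int_0^1 \frac{\partial h}{\partial z_i}\bigl(w + t(z-w)\bigr)\,dt, \qquad i=1,2,
\]
are holomorphic on $B$ and satisfy $h(z) - h(w) = \phi_1 \tilde g_1 + \phi_2 \tilde g_2$ there. Pick $\chi \in C_c^\infty(B)$ with $\chi \equiv 1$ on $B(w,r/2)$ and set
\[
a_i \;:=\; \chi\,\tilde g_i \;+\; (1-\chi)\,\frac{(h-h(w))\,\overline{\phi_i}}{|\phi|^2}, \qquad i=1,2.
\]
The factor $1-\chi$ kills the singular factor $\overline{\phi_i}/|\phi|^2$ near $w$, so each $a_i$ is smooth on $\Omega$; a direct check gives $\phi_1 a_1 + \phi_2 a_2 = h - h(w)$. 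Since $|a_i| \le |\tilde g_i|$ on $B$ and $|a_i| \le (|h| + |h(w)|)/r$ off $B$, each $a_i$ lies in $L^2(\Omega, \delta_\Omega^{-\alpha} dV)$ because $h \in A^2_\alpha(\Omega)$ and $\Omega$ is bounded.

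Next I correct the $a_i$ to holomorphic functions without destroying the decomposition: seek $g_i = a_i - u_i$ with $\bar{\partial} u_i = \bar{\partial} a_i$ and $\phi_1 u_1 + \phi_2 u_2 = 0$. In $\mathbb{C}^2$ the syzygies of $(\phi_1,\phi_2)$ are generated by the single element $(-\phi_2, \phi_1)$, so I parametrize $u_1 = -\phi_2 \lambda$ and $u_2 = \phi_1 \lambda$ for a scalar unknown $\lambda$. The compatibility $\phi_1 \bar{\partial} a_1 + \phi_2 \bar{\partial} a_2 = \bar{\partial}(h - h(w)) = 0$ then permits me to define a globally smooth $\bar{\partial}$-closed $(0,1)$-form $\nu$ on $\Omega$ by $\nu = -\bar{\partial}a_1/\phi_2$ on $\{\phi_2 \neq 0\}$ and $\nu = \bar{\partial}a_2/\phi_1$ on $\{\phi_1 \neq 0\}$; these two patches agree where both are defined, together cover $\Omega \setminus \{w\}$, and $\nu$ vanishes identically on $B(w,r/2)$. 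A short computation gives $|\nu| \lesssim |h - h(w)|/|\phi|^3$ off $B$, from which $\int_\Omega |\nu|^2 \delta_\Omega^{-\alpha} dV < \infty$.

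For the final step, I apply Theorem~\ref{th:main} to $\nu$ with the strictly psh weight $\varphi(z) = |z|^2$: the complex Hessian $i\partial\bar{\partial}\varphi$ is the Euclidean metric up to a constant and $e^{-\varphi}$ is bounded on $\overline{\Omega}$, so estimate \eqref{eq:cor3} produces $\lambda$ with $\bar{\partial}\lambda = \nu$ and $\int_\Omega |\lambda|^2 \delta_\Omega^{-\alpha} dV < \infty$. Boundedness of $\Omega$ then forces $u_i = \pm \phi_j \lambda \in L^2(\Omega, \delta_\Omega^{-\alpha} dV)$, so $g_i := a_i - u_i$ is holomorphic, lies in $A^2_\alpha(\Omega)$, and satisfies $\phi_1 g_1 + \phi_2 g_2 = h - h(w)$. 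The main delicate point is the global smoothness of $\nu$ across the analytic set $\{\phi_1 \phi_2 = 0\}$, which is ensured by the two overlapping formulas together with the compatibility identity; the restriction to $\mathbb{C}^2$ is essential to this direct argument, since in higher dimensions the longer Koszul complex would require iterating $\bar{\partial}$-problems on forms of higher degree.
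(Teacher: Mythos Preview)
Your argument is correct and follows the same Koszul/Wolff reduction to a single scalar $\bar\partial$-equation that the paper uses. The one genuine difference is how the singularity at $w$ is handled. The paper does \emph{not} regularize: it takes $a_i=(h-h(w))\bar\phi_i/|\phi|^2$ globally, so the data $v=(h-h(w))(\bar\phi_2\,d\bar\phi_1-\bar\phi_1\,d\bar\phi_2)/|\phi|^4$ is singular at $w$; to compensate it solves $\bar\partial u=v$ on the punctured domain $\hat\Omega=\Omega\setminus\{\phi_1=0\}$ via Theorem~\ref{th:main2} with the degenerate weight $\varphi=-\log(-\log|\phi|^2)$, whose Hessian points exactly along $v$ and yields $|v|^2_{i\partial\bar\partial\varphi}e^{-\varphi}\lesssim |h-h(w)|^2|\phi|^{-4}(\log|\phi|^2)^2$, which is integrable in $\mathbb C^2$ because $h-h(w)=O(|\phi|)$; a final appeal to Riemann's removable singularities theorem extends $g_1,g_2$ across $\{\phi_1=0\}$.

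Your route trades this delicate weight for the cutoff $\chi$ and the local Taylor integrals $\tilde g_i$: this makes $\nu$ genuinely smooth and compactly supported away from $w$, so the trivial weight $\varphi=|z|^2$ suffices in Theorem~\ref{th:main} on all of $\Omega$, and no removable-singularity step is needed. The paper's version is shorter and shows explicitly why dimension~$2$ enters through the borderline integrability $\int_{|z|<\varepsilon}|z|^{-2}(\log|z|)^2\,dV<\infty$; your version is more elementary and makes the Koszul structure (syzygy $(-\phi_2,\phi_1)$) more transparent. Either way the content is the same: one scalar $\bar\partial$-problem with the weighted estimate \eqref{eq:cor3}.
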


\begin{theorem}\label{th:density}
 Let $\Omega\subset\subset {\mathbb C}^n$ be a pseudoconvex domain with $C^2-$boundary.
\begin{enumerate}

 \item For each $\alpha<1$, $A^2_\alpha(\Omega)$ is dense in the space ${\mathcal O}(\Omega)$, equipped with the topology of uniform convergence on compact subsets.

 \item For any $\alpha_1<\alpha_2<1$, $A^2_{\alpha_2}(\Omega)$ is dense in $A^2_{\alpha_1}(\Omega)$.
  \end{enumerate}
\end{theorem}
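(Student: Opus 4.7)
My plan is to prove both parts by the classical cut-off and $\dbar$-correction scheme: multiply the target function by a smooth cutoff, solve the resulting $\dbar$-closed problem via Theorem~\ref{th:main}, and pick the plurisubharmonic weight so that the correction is small in the norm we need.

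For Part (1), fix $K \Subset \Omega$, $f \in \mathcal{O}(\Omega)$ and $\epsilon > 0$. I pick $\chi \in C_c^\infty(\Omega)$ with $\chi \equiv 1$ on a neighborhood $K' \Supset K$, and (by smoothing $\max(\varrho - a, 0)$ for a smooth strictly psh exhaustion $\varrho$ of $\Omega$) construct a smooth psh $\psi$ on $\Omega$ with $\psi \equiv 0$ on a neighborhood of $K$ and $\psi \geq c > 0$ on $\operatorname{supp}(\dbar\chi)$. Applying Theorem~\ref{th:main} with the strictly psh weight $\varphi := N\psi + \eta|z|^2$ solves $\dbar u_N = f\,\dbar\chi$ with
\[
\int_\Omega |u_N|^2 e^{-N\psi-\eta|z|^2}\delta_\Omega^{-\alpha}\,dV \;\lesssim\; \int_\Omega |f|^2|\dbar\chi|^2 e^{-N\psi-\eta|z|^2}\delta_\Omega^{-\alpha}\,dV \;\lesssim\; e^{-Nc}.
\]
Because $e^{-\varphi}$ is bounded away from zero on $K$ and $u_N$ is holomorphic on $K'$ (the $\dbar$-datum vanishes there), the Bergman mean-value inequality upgrades the $L^2$ smallness to $\sup_K|u_N| \to 0$. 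Then $g_N := \chi f - u_N$ is holomorphic on $\Omega$, belongs to $A^2_\alpha(\Omega)$ (since $\chi f$ has compact support in $\Omega$ and $u_N \in L^2(\delta_\Omega^{-\alpha})$), and $\sup_K |g_N - f| \to 0$.

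For Part (2), I cut off near $\partial\Omega$ using a Diederich--Forn{\ae}ss $C^2$ psh defining function $\rho$ with $-\rho \asymp \delta_\Omega^{\alpha_0}$. For $s > 0$ set $\chi_s(z) := \tilde\chi(-\rho(z)/s)$ with $\tilde\chi \in C^\infty([0,\infty))$ equal to $0$ on $[0,1]$ and $1$ on $[2,\infty)$, so that $\dbar\chi_s$ is a scalar multiple of $\dbar\rho$ supported on $A_s := \{s \leq -\rho \leq 2s\}$. Fix $\beta \in (0,(1-\alpha_2)/\alpha_0)$ and apply Theorem~\ref{th:main} with the Donnelly--Fefferman weight $\varphi := -\beta\log(-\rho)$ and exponent $\alpha := \alpha_0\beta + \alpha_2 < 1$, so that $e^{-\varphi}\delta_\Omega^{-\alpha} \asymp \delta_\Omega^{-\alpha_2}$. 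The key positivity
\[
 i\partial\dbar\varphi \;\geq\; \beta\,\frac{i\partial\rho \wedge \dbar\rho}{\rho^2},
\]
combined with $\dbar\chi_s \parallel \dbar\rho$, yields $|f\,\dbar\chi_s|^2_{i\partial\dbar\varphi} \lesssim |f|^2/\beta$ on $A_s$, so Theorem~\ref{th:main} gives a solution $u_s$ with $\int_\Omega|u_s|^2 \delta_\Omega^{-\alpha_2}\,dV \lesssim (1/\beta)\int_{A_s}|f|^2\delta_\Omega^{-\alpha_2}\,dV < \infty$, hence $g_s := \chi_s f - u_s \in A^2_{\alpha_2}(\Omega)$. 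A parallel application with $\alpha := \alpha_0\beta' + \alpha_1 < 1$ for $\beta' \in (0,(1-\alpha_1)/\alpha_0)$ produces a (possibly different) solution $\tilde u_s$ with
\[
 \|\tilde u_s\|_{\alpha_1}^2 \;\lesssim\; \frac{1}{\beta'}\int_{A_s} |f|^2\delta_\Omega^{-\alpha_1}\,dV \;\longrightarrow\; 0 \quad (s\to 0^+),
\]
because $f \in A^2_{\alpha_1}(\Omega)$ makes $|f|^2\delta_\Omega^{-\alpha_1}$ integrable. Combined with $\|(1-\chi_s)f\|_{\alpha_1} \to 0$ by dominated convergence, this yields $\|g_s - f\|_{\alpha_1} \to 0$ as soon as the two Hörmander solutions are coordinated.

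The main obstacle is exactly that coordination: the two applications of Theorem~\ref{th:main} in Part (2) produce a priori different solutions, differing by an element of $\mathcal{O}(\Omega)$, whereas what we need is a single solution both lying in $A^2_{\alpha_2}(\Omega)$ and enjoying the smallness bound in $A^2_{\alpha_1}(\Omega)$. I expect this is handled by taking $u_s$ to be $L^2$-minimal in the $(-\rho)^{\beta'}\delta_\Omega^{-\alpha_0\beta'-\alpha_1}$-weighted sense (securing the $A^2_{\alpha_1}$ smallness) and then verifying $u_s \in A^2_{\alpha_2}(\Omega)$ a posteriori using the thin-strip support of the $\dbar$-datum together with the holomorphicity of $u_s$ on the boundary layer $\{-\rho < s\}$. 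Part (1) is essentially routine once Theorem~\ref{th:main} is in hand.
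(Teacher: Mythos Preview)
Your Part~(1) is essentially the paper's argument: a cutoff, a psh weight that vanishes near $K$ and is bounded below on $\operatorname{supp}\bar\partial\chi$, and a large parameter $N$. One small point to clean up: you must ensure that $\psi$ (hence $\varphi$) is \emph{bounded} on $\Omega$, since otherwise $\int|u_N|^2 e^{-\varphi}\delta_\Omega^{-\alpha}<\infty$ does not imply $u_N\in L^2(\delta_\Omega^{-\alpha})$ and you lose $g_N\in A^2_\alpha(\Omega)$. The paper handles this by multiplying the unbounded part of the weight by a compactly supported cutoff and compensating with an extra bounded psh term; you can equally well truncate $\psi$ from above and re-smooth.

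Part~(2), however, has the genuine gap you yourself flag, and your proposed fix does not work. Taking $u_s$ to be $L^2$-minimal in the $\alpha_1$-weighted sense and then invoking holomorphicity of $u_s$ on the collar $\{-\rho<s\}$ gives no integrability gain: on that collar $u_s=-g_s$ with $g_s\in A^2_{\alpha_1}(\Omega)$, and there is no mechanism to upgrade an arbitrary element of $A^2_{\alpha_1}$ to $A^2_{\alpha_2}$. So the coordination problem is not resolved a~posteriori.

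The paper avoids this difficulty by a single application of Theorem~\ref{th:main} with a \emph{bounded} psh weight
\[
\varphi_\varepsilon \;=\; -\tfrac{\alpha_2-\alpha_1}{a}\,\log(-\rho+\varepsilon),
\]
using the regularization $-\rho+\varepsilon$ in place of your $-\rho$, and exponent $\alpha_2$. Because $\varphi_\varepsilon$ is bounded, the solution $u_\varepsilon$ of $\bar\partial u_\varepsilon=v_\varepsilon$ furnished by Theorem~\ref{th:main} lies automatically in $L^2(\delta_\Omega^{-\alpha_2})$, hence $f_\varepsilon:=\chi_\varepsilon f-u_\varepsilon\in A^2_{\alpha_2}(\Omega)$. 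At the same time,
\[
e^{-\varphi_\varepsilon}\;=\;(-\rho+\varepsilon)^{\frac{\alpha_2-\alpha_1}{a}}\;\ge\;(-\rho)^{\frac{\alpha_2-\alpha_1}{a}}\;\asymp\;\delta_\Omega^{\alpha_2-\alpha_1},
\]
so the \emph{same} $\bar\partial$-estimate yields
\[
\int_\Omega|u_\varepsilon|^2\delta_\Omega^{-\alpha_1}\,dV\;\lesssim\;\int_\Omega|u_\varepsilon|^2 e^{-\varphi_\varepsilon}\delta_\Omega^{-\alpha_2}\,dV\;\lesssim\;\int_{\{\varepsilon\le -\rho\le 3\varepsilon\}}|f|^2\delta_\Omega^{-\alpha_1}\,dV\;\longrightarrow\;0.
\]
The crucial point you are missing is that one well-chosen bounded weight does both jobs at once; no second solution and no coordination are needed.
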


The following result is an analogue of the Levi problem for $A^2_\alpha(\Omega)$, which also generalizes an old result of Pflug (cf. \cite{Pflug75}):

\begin{theorem}\label{th:levi}
 Let $\Omega\subset\subset {\mathbb C}^n$ be a pseudoconvex domain with $C^2-$boundary. Then for each $\alpha<1$, there are $\beta>0$ and $f\in A^2_\alpha(\Omega)$ such that for all $\zeta\in \partial \Omega$,
 $$
 \limsup_{z\rightarrow \zeta}\,|f(z)|\delta_\Omega(z)^{1-\frac{\alpha}2}\left|\log \delta_\Omega(z)\right|^\beta=\infty.
 $$
 \end{theorem}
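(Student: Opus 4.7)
The plan is to reduce Theorem~\ref{th:levi} to a sharp pointwise lower bound on the diagonal of the weighted Bergman kernel,
\begin{equation}
K_\alpha(z,z)\,\gtrsim\,\delta_\Omega(z)^{-(2-\alpha)}\,|\log\delta_\Omega(z)|^{-2\gamma},\qquad z\in\Omega\text{ near }\partial\Omega,\label{eq:klb}
\end{equation}
for some fixed $\gamma\geq 0$, and then to deduce the theorem via a Banach--Steinhaus/Baire category argument. The estimate \eqref{eq:klb} will be produced as an application of Theorem~\ref{th:main}.

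\textbf{Baire step.} Fix $\beta>\gamma$, a countable dense set $\{\zeta_k\}_{k\geq 1}\subset\partial\Omega$, and for each $k$ a sequence $z_n^k\to\zeta_k$ inside $\Omega$ (e.g.\ along the inner normal). The evaluation functionals
$$L_n^k(f):=f(z_n^k)\,\delta_\Omega(z_n^k)^{1-\alpha/2}\,|\log\delta_\Omega(z_n^k)|^\beta$$
are bounded on $A^2_\alpha(\Omega)$ with squared operator norm $K_\alpha(z_n^k,z_n^k)\,\delta_\Omega(z_n^k)^{2-\alpha}\,|\log\delta_\Omega(z_n^k)|^{2\beta}$. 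By \eqref{eq:klb} this is $\gtrsim|\log\delta_\Omega(z_n^k)|^{2(\beta-\gamma)}\to\infty$ as $n\to\infty$. Banach--Steinhaus yields, for each $k$, a residual set $R_k\subset A^2_\alpha(\Omega)$ on which $\sup_n|L_n^k(f)|=\infty$; by Baire the intersection $\bigcap_k R_k$ is nonempty, and for any $f$ in it, unboundedness of $|L_n^k(f)|$ in $n$ (each $L_n^k$ being continuous on $A^2_\alpha$) forces large values along arbitrarily large $n$, giving $\limsup_{z\to\zeta_k}|f(z)|\delta_\Omega(z)^{1-\alpha/2}|\log\delta_\Omega(z)|^\beta=\infty$ for every $k$. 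Density of $\{\zeta_k\}$ in $\partial\Omega$ upgrades this to every $\zeta\in\partial\Omega$.

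\textbf{Kernel bound via Theorem~\ref{th:main}.} To prove \eqref{eq:klb}, I construct, for each $z\in\Omega$ close to $\partial\Omega$, a function $F_z\in A^2_\alpha(\Omega)$ with $|F_z(z)|\gtrsim 1$ and $\|F_z\|_\alpha^2\lesssim\delta_\Omega(z)^{2-\alpha}|\log\delta_\Omega(z)|^{2\gamma}$, so that $K_\alpha(z,z)\geq|F_z(z)|^2/\|F_z\|_\alpha^2$. The one-variable model is a function of the form $g(w_1)=\delta_\Omega(z)^s/(w_1-\zeta_\ast)^s$ on the complex normal line $\ell$ through $z$, where $\zeta_\ast$ is a slight outward perturbation of the nearest boundary point $\zeta\in\partial\Omega$ and $s$ is chosen close to $1-\alpha/2$; a direct planar computation gives $|g(z)|\asymp 1$ and $\int_{\ell\cap\Omega}|g|^2\delta_{\ell\cap\Omega}^{-\alpha}\,dA\lesssim\delta_\Omega(z)^{2-\alpha}$ up to polylogarithmic factors. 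Extending $g$ transversely via a smooth cutoff $\chi$ adapted to the boundary geometry at $\zeta$ and solving $\bar\partial u=\bar\partial(\chi g)$ via Theorem~\ref{th:main}, with a psh weight $\varphi$ chosen to provide enough positivity of $i\partial\bar\partial\varphi$ on $\operatorname{supp}\bar\partial\chi$, produces $F_z:=\chi g-u$; a mean-value argument (using holomorphicity of $u$ near $z$ together with the Hörmander $L^2$-bound) confirms $|F_z(z)|\gtrsim 1$.

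\textbf{Main obstacle.} The principal technical difficulty is the construction of $F_z$: one must design the cutoff $\chi$ and the psh weight $\varphi$ so that the right-hand side of the estimate in Theorem~\ref{th:main} is comparable to the one-variable slice norm $\delta_\Omega(z)^{2-\alpha}$ without losing the critical exponent $2-\alpha$. This forces $\chi$ to be narrow in the complex normal direction (width $\asymp\delta_\Omega(z)$) yet broader in the transverse directions dictated by the Levi form at $\zeta$, while $\varphi$ must retain enough positivity on $\operatorname{supp}\bar\partial\chi$ to control $|\bar\partial\chi|^2_{i\partial\bar\partial\varphi}$. Polylogarithmic losses in $\|F_z\|_\alpha^2$ are harmless since the Banach--Steinhaus step absorbs them by enlarging $\beta$, so the plan is robust to these technicalities.
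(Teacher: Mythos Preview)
Your overall architecture --- prove a pointwise lower bound $K_\alpha(z,z)\gtrsim\delta_\Omega(z)^{-(2-\alpha)}|\log\delta_\Omega(z)|^{-2\gamma}$ and then deduce the statement by Banach--Steinhaus --- is exactly what the paper does; the Baire step is fine and essentially equivalent to the paper's Proposition~\ref{prop:pflug}.

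The gap is in the construction of $F_z$. Your mean-value argument for $|u(z)|$ fails as soon as $n\ge 2$: if $\int_\Omega|u|^2\delta_\Omega^{-\alpha}\,dV\lesssim\delta_\Omega(z)^{2-\alpha}$, the sub-mean-value inequality on a ball of radius $\tfrac12\delta_\Omega(z)$ only gives
\[
|u(z)|^2\;\lesssim\;\delta_\Omega(z)^{-2n}\cdot\delta_\Omega(z)^{\alpha}\cdot\delta_\Omega(z)^{2-\alpha}\;=\;\delta_\Omega(z)^{2-2n},
\]
which blows up. You cannot conclude $|F_z(z)|\gtrsim 1$ this way. The remedy is to put into $\varphi$ a term with a logarithmic pole at $z$ (of the type $2n\log|\cdot-z|$), forcing $u(z)=0$; you do not mention this, and once you add it you must also keep the right-hand side of Theorem~\ref{th:main} finite, which constrains the cutoff far more tightly than you indicate.

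More substantively, the scheme ``narrow in the normal direction, broader in the transverse directions dictated by the Levi form'' is not available on a general $C^2$ pseudoconvex boundary: at weakly pseudoconvex points the Levi form can degenerate arbitrarily, there is no canonical anisotropic scale, and the complex tangent hyperplane can cut into $\Omega$, so your one-variable $g$ need not even be holomorphic outside a small neighbourhood of $\zeta$. The paper avoids all of this. Instead of an explicit peak function and a geometric cutoff, it takes the \emph{unweighted} Bergman kernel $K_\Omega(\cdot,w)/\sqrt{K_\Omega(w)}$ as the model, cuts it off by $\chi(-\log(-g_\Omega(\cdot,w)))$ using the pluricomplex Green function, and applies Theorem~\ref{th:main} with weight $\varphi=2n\,g_\Omega(\cdot,w)-\log(-g_\Omega(\cdot,w)+1)$. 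The Green function simultaneously supplies the logarithmic pole that forces $u(w)=0$ and, via Blocki's estimate on the sublevel set $\{g_\Omega(\cdot,w)\le -1\}$ (Theorem~\ref{th:blocki}), the correct localization with only a polylogarithmic loss --- no appeal to Levi geometry is needed. The Ohsawa--Takegoshi lower bound $K_\Omega(w)\gtrsim\delta_\Omega(w)^{-2}$ then finishes the kernel estimate.
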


 It should be pointed out that each bounded pseudoconvex domain with {\it $C^\infty-$boundary} is the domain of existence of a function in $A^\infty(\overline{\Omega}):={\mathcal O}(\Omega)\cap C^\infty(\overline{\Omega})$ (cf. \cite{Catlin80}, see also \cite{HakimSibony80}).

On the other side, we have the following Gehring type estimate:

\begin{theorem}\label{th:gehring}
 Let $\Omega\subset {\mathbb C}^n$ be a bounded domain with $C^2-$boundary and let $f\in A^2_\alpha(\Omega)$, $\alpha<1$. Then for almost all $\zeta\in \partial \Omega$
$$
|f(z)|={\rm o}(\delta_\zeta(z)^{-\frac{1-\alpha}2})\ \ \ \ \ \  {\it uniformly},
$$
as $z$ approaches $\zeta$ admissibly. Here $\delta_\zeta(z)=$minimum of $\delta_\Omega(z)$ and the distance from $z$ to the tangent space at $\zeta$, and $A={\rm o}(B)$ means $\lim A/B= 0$.
\end{theorem}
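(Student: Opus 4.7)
\emph{Plan.} The proof is elementary: it combines the sub-mean value inequality for $|f|^2$ with the Besicovitch differentiation theorem applied to the finite Radon measure $d\mu:=|f|^2\delta_\Omega^{-\alpha}\,dV$. Neither Theorem~\ref{th:main} nor pseudoconvexity of $\Omega$ is needed.

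\emph{Pointwise estimate.} Since $|f|^2$ is plurisubharmonic and $B(z,\delta_\Omega(z)/2)\subset\Omega$, the sub-mean value inequality, together with the fact that $\delta_\Omega(w)\asymp\delta_\Omega(z)$ throughout this ball, gives
\[
|f(z)|^2\,\delta_\Omega(z)^{1-\alpha}\;\le\;\frac{C_n}{\delta_\Omega(z)^{2n-1}}\,\mu\!\bigl(B(z,\delta_\Omega(z)/2)\bigr).
\]
Since $\delta_\zeta(z)\le\delta_\Omega(z)$ and $1-\alpha>0$, the same bound holds with $\delta_\zeta(z)^{1-\alpha}$ on the left.

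\emph{Boundary density of $\mu$.} The measure $\mu$ has finite total mass $\|f\|_\alpha^2$ and support in the open set $\Omega$, hence is mutually singular with the $(2n-1)$-dimensional Hausdorff measure $\sigma$ on $\partial\Omega$. Besicovitch differentiation on $\C^n$ then yields $\mu(B(\zeta,r))/\sigma(B(\zeta,r))\to 0$ as $r\downarrow 0$ for $\sigma$-a.e.\ $\zeta\in\partial\Omega$. Since $\partial\Omega$ is $C^2$, $\sigma(B(\zeta,r))\asymp r^{2n-1}$ uniformly in $\zeta$, so
\[
\lim_{r\downarrow 0}\frac{\mu(B(\zeta,r))}{r^{2n-1}}=0\quad\text{for a.e.\ }\zeta\in\partial\Omega.
\]
Fix such a $\zeta$ and let $z\to\zeta$ admissibly, so that $|z-\zeta|\le C_0\,\delta_\Omega(z)$. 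Then $B(z,\delta_\Omega(z)/2)\subset B(\zeta,r_z)$ with $r_z:=(C_0+\tfrac{1}{2})\delta_\Omega(z)$, and the right-hand side of the pointwise estimate is at most a constant multiple of $\mu(B(\zeta,r_z))/r_z^{2n-1}$, which tends to $0$. Because the bound depends on $z$ only through the scalar $r_z$, the decay is uniform over the admissible cone, which is exactly the ``o''-statement in the theorem.

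\emph{Main obstacle.} The only substantive subtlety is matching the statement's notion of admissible approach with the Euclidean ball covering: one needs $|z-\zeta|=O(\delta_\Omega(z))$, so that Euclidean balls at interior $z$ can be absorbed into Euclidean balls at $\zeta$ of comparable radius. This is the content of the standard non-tangential admissible region attached to a $C^2$ boundary. Sharpness of the exponent $(1-\alpha)/2$ (the ``optimal'' in the title) is verified separately by testing against explicit extremal functions on a ball or half-space.
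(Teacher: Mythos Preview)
Your argument is clean and correct \emph{for non-tangential approach}, but it does not prove the theorem as stated. The gap is precisely where you flagged the ``only substantive subtlety'': you assume that admissible approach means $|z-\zeta|\le C_0\,\delta_\Omega(z)$, and in the last paragraph you even identify this with ``the standard non-tangential admissible region''. That identification is wrong. In this paper (following Stein) the admissible region is
\[
{\mathcal A}_t(\zeta)=\bigl\{z\in\Omega:\ |(z-\zeta)\cdot\bar\nu_\zeta|<(1+t)\,\delta_\zeta(z),\ \ |z-\zeta|^2<t\,\delta_\zeta(z)\bigr\},
\]
so in the complex-tangential directions one only has $|z-\zeta|\lesssim\sqrt{\delta_\zeta(z)}$, not $|z-\zeta|\lesssim\delta_\zeta(z)$. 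With this, the inclusion $B(z,\delta_\Omega(z)/2)\subset B(\zeta,r_z)$ forces $r_z\asymp\sqrt{\delta_\Omega(z)}$; plugging into your display gives
\[
|f(z)|^2\,\delta_\zeta(z)^{1-\alpha}\ \le\ C\,\frac{r_z^{2n-1}}{\delta_\Omega(z)^{2n-1}}\cdot\frac{\mu(B(\zeta,r_z))}{r_z^{2n-1}}
\ \asymp\ \delta_\Omega(z)^{-(2n-1)/2}\cdot\frac{\mu(B(\zeta,r_z))}{r_z^{2n-1}},
\]
and the prefactor $\delta_\Omega(z)^{-(2n-1)/2}$ blows up, so Besicovitch differentiation of $\mu$ against \emph{isotropic} Euclidean balls no longer yields the conclusion.

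This is not a cosmetic mismatch: the whole point of the Gehring-type statement here is that it holds on the wider parabolic approach regions, and that is exactly why the paper invokes Stein's non-isotropic maximal function $Mf=(f_1^\ast)_2^\ast$ (built from the balls $B_2(\zeta_0,r)=\{|(\zeta-\zeta_0)\cdot\bar\nu_{\zeta_0}|<r,\ |\zeta-\zeta_0|^2<r\}$) together with the Gehring device of integrating the resulting admissible maximal function against $\varepsilon^{-\alpha}\,d\varepsilon$. Your isotropic sub-mean-value/Besicovitch approach recovers the easier non-tangential version of the theorem; to reach the stated admissible conclusion you would at minimum need to replace Euclidean balls at $\zeta$ by the anisotropic ones $B_2(\zeta,r)$ and use a differentiation/maximal theorem adapted to that family, which is essentially what the paper does.
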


The concept of admissible approach was introduced by Stein \cite{Stein72} in his far-reaching generalization of Fatou's theorem for holomorphic functions in a bounded domain with $C^2-$boundary.

It turns out that the above bound is optimal for the case of the unit ball:

\begin{theorem}\label{th:ball}
Let ${\mathbb B}^n$ be the unit ball in ${\mathbb C}^n$ and ${\mathbb S}^n$ the unit sphere. For each $\alpha<1$,  there is a number $t_\alpha>1$ such that for each $\varepsilon>0$, there exists a  function $f\in A^2_\alpha({\mathbb B}^n)$ so that for each $\zeta\in {\mathbb S}^n$,
$$
\limsup |f(z)|(1-|z|)^{\frac{1-\alpha}2}\left|\log (1-|z|)\right|^{\frac{1+\varepsilon}2}>0
$$
as $z\rightarrow \zeta$ from the inside of the Koranyi region ${\mathcal A}_{t_\alpha}(\zeta)$ defined by
$$
{\mathcal A}_{t_\alpha}(\zeta)=\left\{z\in {\mathbb B}^n:|1-z\cdot\bar{\zeta}|<t_\alpha(1-|z|)\right\}.
$$
\end{theorem}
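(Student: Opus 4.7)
The plan is to reduce to a one-variable lacunary construction on the disk, lift to the ball by composition with the linear functional $z\mapsto z\cdot\bar\zeta$, and stitch over all boundary points using a Baire category argument. First I would observe that for any $\zeta\in\mathbb{S}^n$ and any $g\in{\mathcal O}(\mathbb{D})$, the function $F_\zeta(z):=g(z\cdot\bar\zeta)$ satisfies, by unitary invariance of $dV$ and Fubini,
$$
\|F_\zeta\|_{A^2_\alpha(\mathbb{B}^n)}^2 \asymp \int_{\mathbb{D}}|g(\lambda)|^2(1-|\lambda|^2)^{n-1-\alpha}dA(\lambda),
$$
since the inner integration of $(1-|z|)^{-\alpha}$ over $\{|z'|^2<1-|z_1|^2\}$ produces the factor $(1-|z_1|^2)^{n-1-\alpha}$, which is integrable on $\mathbb{D}$ because $\alpha<1\leq n$. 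Thus $F_\zeta\in A^2_\alpha(\mathbb{B}^n)$ iff $g$ lies in a standard weighted Bergman space on the disk. Moreover, for any $t_\alpha>1$ the radial segment $\{r\zeta:0<r<1\}$ sits inside $\mathcal{A}_{t_\alpha}(\zeta)$, and along it $F_\zeta(r\zeta)=g(r)$, so a radial lower bound on $|g|$ transfers to a lower bound for $|F_\zeta|$ inside the Koranyi region.

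Next I would take a fast lacunary sequence $n_k:=q^k$ with $q>1$ chosen large depending on $n-\alpha$, together with coefficients $c_k:=n_k^{(n-\alpha)/2}/k^{(1+\varepsilon)/2}$, and set $g_0(\lambda):=\sum_k c_k\lambda^{n_k}$. Orthogonality of the monomials and the beta-integral asymptotic $\|\lambda^m\|^2\asymp m^{\alpha-n}$ in the disk weighted by $(1-|\lambda|^2)^{n-1-\alpha}$ give $\|g_0\|^2\asymp\sum_k k^{-(1+\varepsilon)}<\infty$. At the peaking radii $r_K:=1-1/n_K$ the $K$-th term is of order $c_K/e$, while the sum over $k<K$ is geometric and bounded by $c_K/10$ once $q$ is large, and the terms with $k>K$ decay super-exponentially in $q^{k-K}$; hence
$$
|g_0(r_K)|\gtrsim c_K\asymp(1-r_K)^{-(n-\alpha)/2}|\log(1-r_K)|^{-(1+\varepsilon)/2},
$$
so that $|g_0(r_K)|(1-r_K)^{(1-\alpha)/2}|\log(1-r_K)|^{(1+\varepsilon)/2}\gtrsim (1-r_K)^{(1-n)/2}$, which is a positive constant when $n=1$ and blows up when $n\geq 2$.

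To obtain the property at every $\zeta\in\mathbb{S}^n$ simultaneously, I would fix apertures $1<s_\alpha<t_\alpha$, choose a countable dense subset $\{\zeta_j\}\subset\mathbb{S}^n$, and apply Baire category in the Hilbert space $A^2_\alpha(\mathbb{B}^n)$. For each $j,N\in\mathbb{N}$ the set
$$
E_{j,N}:=\bigl\{f\in A^2_\alpha(\mathbb{B}^n):\exists\,z\in\mathcal{A}_{s_\alpha}(\zeta_j),\ 1-|z|<1/N,\ |f(z)|(1-|z|)^{(1-\alpha)/2}|\log(1-|z|)|^{(1+\varepsilon)/2}>1\bigr\}
$$
is open by continuity of point evaluation on $A^2_\alpha(\mathbb{B}^n)$ and dense because, given any $h\in A^2_\alpha(\mathbb{B}^n)$ and any $\eta>0$, the quantities $\eta|F_{\zeta_j}(r_K\zeta_j)|(1-r_K)^{(1-\alpha)/2}|\log(1-r_K)|^{(1+\varepsilon)/2}$ are unbounded in $K$, so either $h$ already lies in $E_{j,N}$ or the reverse triangle inequality places $h+\eta F_{\zeta_j}$ in $E_{j,N}$. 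Baire then yields $f\in\bigcap_{j,N}E_{j,N}$; for arbitrary $\zeta\in\mathbb{S}^n$ one takes $\zeta_j\to\zeta$ and invokes the elementary inclusion $\mathcal{A}_{s_\alpha}(\zeta_j)\subset\mathcal{A}_{t_\alpha}(\zeta)$ (valid once $\zeta_j$ is close enough to $\zeta$) to transfer the growth to $\mathcal{A}_{t_\alpha}(\zeta)$.

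The main obstacle is the quantitative radial lower bound in the lacunary step with the sharp log exponent $(1+\varepsilon)/2$: making the non-peaking tails negligible compared to $c_K$ dictates a careful choice of $q$, and the exponent $(1+\varepsilon)/2$ is forced by matching the summability threshold for $\sum k^{-(1+\varepsilon)}$ appearing in the $A^2_\alpha$ norm of $g_0$. Everything else---the slicing identity, the openness and density of the sets $E_{j,N}$, and the passage from a dense set of boundary points to the full sphere---should be routine once the one-variable construction is in hand.
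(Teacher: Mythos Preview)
Your approach---lacunary slices lifted from the disk, then Baire category---is quite different from the paper's, which interpolates prescribed values on a maximal $2\tau$-separated net in the Bergman metric by solving a $\bar\partial$-equation with the weighted estimate of Theorem~1.1. The paper's single function $f$ already takes the value $(1-|w_j|)^{-(1-\alpha)/2}\bigl(\log\frac{1}{1-|w_j|}\bigr)^{-(1+\varepsilon)/2}$ at \emph{every} net point $w_j$, and since each Koranyi region $\mathcal{A}_{t_\alpha}(\zeta)$ contains infinitely many net points approaching $\zeta$, the conclusion for all $\zeta$ is immediate---no density or category argument is needed.

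Your proposal has two gaps. The minor one is in the density step when $n=1$: you yourself computed that $|g_0(r_K)|(1-r_K)^{(1-\alpha)/2}|\log(1-r_K)|^{(1+\varepsilon)/2}$ is only a positive constant in that case, not unbounded, so the reverse-triangle argument does not force $h+\eta F_{\zeta_j}\in E_{j,N}$ for small $\eta$. This is easily repaired by taking coefficients $c_k=n_k^{(n-\alpha)/2}/k^{(1+\varepsilon')/2}$ with some $0<\varepsilon'<\varepsilon$, which makes the quantity blow up like $K^{(\varepsilon-\varepsilon')/2}$ in all dimensions while keeping $g_0$ in the weighted Bergman space.

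The serious gap is the claimed ``elementary inclusion'' $\mathcal{A}_{s_\alpha}(\zeta_j)\subset\mathcal{A}_{t_\alpha}(\zeta)$ for $\zeta_j$ close to $\zeta$. This is false: take $z=r\zeta_j$ with $r\to 1$; then $z\in\mathcal{A}_{s_\alpha}(\zeta_j)$ always, yet $|1-z\cdot\bar\zeta|\to|1-\zeta_j\cdot\bar\zeta|>0$ while $t_\alpha(1-|z|)\to 0$, so $z\notin\mathcal{A}_{t_\alpha}(\zeta)$ once $r$ is near $1$. The inclusion holds only on the slab where $1-|z|\gtrsim|\zeta-\zeta_j|$, but the witness $z$ supplied by $E_{j,N}$ satisfies only the \emph{upper} bound $1-|z|<1/N$, so you cannot transfer it into $\mathcal{A}_{t_\alpha}(\zeta)$. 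A fix is to build a two-sided shell into the definition, say
\[
E_{j,N}=\bigl\{f:\ \exists\,z\in\mathcal{A}_{s_\alpha}(\zeta_j),\ q^{-N-1}\le 1-|z|\le q^{-N},\ |f(z)|(1-|z|)^{(1-\alpha)/2}|\log(1-|z|)|^{(1+\varepsilon)/2}>1\bigr\},
\]
which is still open and still dense (the point $r_N\zeta_j$ lies in this shell). Then for arbitrary $\zeta$ and each $N$ pick $\zeta_{j(N)}$ with $|\zeta_{j(N)}-\zeta|<(t_\alpha-s_\alpha)q^{-N-1}$; the witness $z$ from $E_{j(N),N}$ now satisfies $|\zeta_{j(N)}-\zeta|<(t_\alpha-s_\alpha)(1-|z|)$ and hence lies in $\mathcal{A}_{t_\alpha}(\zeta)$. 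With these two repairs your more elementary route does go through.
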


 Stein \cite{Stein72} suggested to study the relation between the Bergman and Szeg\"o kernels. In \cite{ChenFu11}, Chen-Fu obtained a comparison of the Szeg\"o and Bergman kernels for so-called $\delta-$regular domains including domains of finite type and domains with psh defining functions. Here we shall prove the following natural connection between the weighted Bergman kernelss $K_\alpha$ and the Szeg\"o kernel $S$, which seems not to have been noticed in the literature:

 \begin{theorem}\label{th:bergmanszego}
  Let $\Omega\subset {\mathbb C}^n$ be a bounded domain with $C^2-$boundary. Then
 $$
 (1-\alpha)^{-1}K_\alpha(z,w)\rightarrow S(z,w)
 $$
  locally uniformly in $z,w$ as $\alpha\rightarrow 1-$. In particular,
 $$
\left.\frac{\partial K_\alpha(z,w)}{\partial \alpha}\right|_{1-}:=\lim_{\alpha\rightarrow 1-}\frac{K_\alpha(z,w)-K_1(z,w)}{\alpha-1}=-S(z,w).
$$
\end{theorem}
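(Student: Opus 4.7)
The plan is to reduce the off-diagonal convergence to convergence on the diagonal, and then sandwich $(1-\alpha)^{-1}K_\alpha(z,z)$ between expressions that both tend to $S(z,z)$ by exploiting the coarea formula with respect to $\delta_\Omega$ together with the Szeg\"o extremal problem. The reduction is standard: Cauchy--Schwarz gives $|K_\alpha(z,w)|^2\le K_\alpha(z,z)K_\alpha(w,w)$, so a locally uniform diagonal bound makes $(1-\alpha)^{-1}K_\alpha$ a normal family of sesqui-holomorphic functions on $\Omega\times\Omega$; any subsequential limit $F$ equals $S(z,z)$ on the diagonal, and expanding $F-S$ as a joint power series in $(z-z_0,\overline{w-z_0})$ around any point $(z_0,z_0)$ forces all coefficients to vanish by polarization, so $F\equiv S$.

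\emph{Coarea engine and lower bound.} Near $\partial\Omega$ the distance $\delta_\Omega$ is $C^2$ with $|\nabla\delta_\Omega|\equiv 1$, so coarea gives
\[
\int_{\{\delta_\Omega<\eps_0\}}|f|^2\delta_\Omega^{-\alpha}\,dV=\int_0^{\eps_0}r^{-\alpha}g_f(r)\,dr,\qquad g_f(r):=\int_{\{\delta_\Omega=r\}}|f|^2\,d\sigma_r.
\]
The elementary asymptotic $(1-\alpha)\int_0^{\eps_0}r^{-\alpha}\varphi(r)\,dr\to\varphi(0^+)$, valid whenever $\varphi$ is continuous at $0$, is the workhorse. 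For $\eta>0$ pick $f\in\mathcal{O}(\Omega)\cap C(\overline{\Omega})$ with $f(z)=1$ and $\|f\|_{L^2(\partial\Omega)}^2\le 1/S(z,z)+\eta$ (possible by density of $A(\overline{\Omega})$ in $H^2(\Omega)$); the engine then yields $(1-\alpha)\|f\|_\alpha^2\to\|f\|_{L^2(\partial\Omega)}^2$, so $(1-\alpha)^{-1}K_\alpha(z,z)\ge 1/((1-\alpha)\|f\|_\alpha^2)\to S(z,z)/(1+\eta S(z,z))$, and $\eta\to 0$ gives $\liminf\ge S(z,z)$.

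\emph{Upper bound.} For arbitrary $f\in A^2_\alpha$ with $f(z)=1$ and any $r$ with $g_f(r)<\infty$, the restriction $f|_{\Omega_r}$ to $\Omega_r:=\{\delta_\Omega>r\}$ belongs to $H^2(\Omega_r)$ with norm $\sqrt{g_f(r)}$, so the Szeg\"o extremal inequality on $\Omega_r$ forces $g_f(r)\ge 1/S_{\Omega_r}(z,z)$, a bound independent of $f$. Invoking the stability $S_{\Omega_r}(z,z)\to S_\Omega(z,z)$ as $r\to 0^+$ locally uniformly in $z$, choose $r_0$ with $S_{\Omega_r}\le S+\eta$ for $r<r_0$; then $(1-\alpha)\|f\|_\alpha^2\ge(1-\alpha)\int_0^{r_0}r^{-\alpha}g_f(r)\,dr\ge r_0^{1-\alpha}/(S(z,z)+\eta)$ uniformly in $f$, so $\limsup(1-\alpha)^{-1}K_\alpha(z,z)\le S(z,z)+\eta$, and $\eta\to 0$ closes the sandwich.

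\emph{Main obstacle and derivative formula.} The one nontrivial analytic input is the domain-stability $S_{\Omega_r}(z,z)\to S_\Omega(z,z)$ under the inner $C^2$-approximation $\Omega_r\uparrow\Omega$, needed to convert the $f$-independent slice bound $g_f(r)\ge 1/S_{\Omega_r}(z,z)$ into a bound involving $S_\Omega$; everything else is routine coarea manipulation plus the normal-families reduction. The derivative assertion then follows immediately: the paper's vanishing theorem for $A^2_1(\Omega)$ gives $K_1\equiv 0$, so $(K_\alpha-K_1)/(\alpha-1)=-K_\alpha/(1-\alpha)\to -S(z,w)$.
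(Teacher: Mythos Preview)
Your argument is correct and takes a genuinely different route from the paper's. You reduce the off-diagonal statement to the diagonal via Cauchy--Schwarz, normal families, and polarization, then sandwich $(1-\alpha)^{-1}K_\alpha(z,z)$ using the coarea formula: the lower bound from a near-extremal Hardy-space test function, the upper bound from the uniform slice inequality $g_f(r)\ge 1/S_{\Omega_r}(z,z)$ (the Szeg\"o extremal problem on each inner domain $\Omega_r$), and both ends are closed by the stability $S_{\Omega_r}(z,z)\to S_\Omega(z,z)$. The paper, by contrast, works with the off-diagonal kernel throughout: after the analogous lower bound with $f=S(\cdot,w)$, it gets the upper bound by locating, for each $\alpha$, a level $\varepsilon^\ast\in[\varepsilon_1,\varepsilon_2]$ minimizing $\|K_\alpha(\cdot,w)\|_{\partial\Omega_\varepsilon}^2$, which yields $\|K_\alpha(\cdot,w)\|_{\partial\Omega_{\varepsilon^\ast}}^2\le(1-\alpha)(\varepsilon_2^{1-\alpha}-\varepsilon_1^{1-\alpha})^{-1}K_\alpha(w)$; it then expands $\|(1-\alpha)^{-1}K_\alpha(\cdot,w)-S_{\varepsilon^\ast}(\cdot,w)\|_{\partial\Omega_{\varepsilon^\ast}}^2$ via the reproducing properties and passes to the interior with the Poisson kernel. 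Both arguments hinge on the same Boas stability input; yours is more elementary and transparent, while the paper's produces as a byproduct an $L^2(\partial\Omega_{\varepsilon^\ast})$-smallness of the kernel difference, which is a slightly sharper intermediate statement. One small caveat on your lower bound: density of $A(\overline{\Omega})$ in $H^2(\Omega)$ is not automatic for an arbitrary bounded $C^2$ domain; you can sidestep this entirely by testing directly with $f=S(\cdot,z)/S(z,z)$ and using the standard Hardy-space fact that $g_f(r)\to\|f\|_{H^2}^2$ (this is essentially how the paper handles the lower bound, invoking Boas for $\|S(\cdot,w)\|_{\partial\Omega_\varepsilon}^2\to S(w)$).
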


For general bounded domains, a fundamental question immediately arises:

\medskip

{\it When is $A^2_\alpha(\Omega)$ trivial or nontrivial?  }

\medskip

Clearly, $A^2_\alpha(\Omega)$ is always nontrivial for $\alpha\le 0$. On the other side, we have the following vanishing theorem:

\begin{theorem}\label{th:vanish}
Let $\Omega$ be a bounded domain in ${\mathbb C}^n$.
\begin{enumerate}
\item For each $f\in {\mathcal O}(\Omega)$ with
$
\int_\Omega |f|^2 \delta_\Omega^{-1}\left(1+|\log \delta_\Omega|\right)^{-1}dV<\infty,
$
 we have $f=0$. In particular, $A^2_\alpha(\Omega)=\{0\}$ for each $\alpha\ge 1$.
\item
 Let $\Omega_\varepsilon=\{z\in \Omega:\delta_\Omega(z)>\varepsilon\}$ and let $c(\varepsilon):={\rm cap}\left(\overline{\Omega}_\varepsilon,\Omega\right)$ denote the capacity of $\overline{\Omega}_\varepsilon$ in $\Omega$. Suppose there is a sequence $\varepsilon_j\rightarrow 0+$, so that $c(\varepsilon_j)=O(\varepsilon_j^{-\alpha})$, then $A^2_\alpha(\Omega)=\{0\}$.
 \end{enumerate}
 \end{theorem}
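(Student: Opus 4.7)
The proof splits into two independent arguments, both exploiting plurisubharmonicity of $|f|^2$ on $\Omega$.

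For assertion (1), I argue by contradiction. Assume $f\not\equiv 0$ and pick $z_0\in\Omega$ with $f(z_0)\ne 0$. For small $t>0$ set $\Omega_t:=\{\delta_\Omega>t\}$; since $\partial\Omega\in C^2$, $\Omega_t$ is a $C^2$-bounded subdomain and $|\nabla\delta_\Omega|\equiv 1$ in a one-sided tubular neighborhood of $\partial\Omega$. As $|f|^2$ is subharmonic on $\Omega\subset\mathbb{R}^{2n}$, the sub-mean-value inequality on $\Omega_t$ yields
$$|f(z_0)|^2\le\int_{\partial\Omega_t}|f|^2\,d\omega_{\Omega_t}(z_0,\cdot),$$
where $\omega_{\Omega_t}(z_0,\cdot)$ is harmonic measure at $z_0$. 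Because $\partial\Omega$ is $C^2$, $\{\Omega_t\}_{t\downarrow 0}$ is a smooth family converging to $\Omega$ and the Poisson kernel $P_{\Omega_t}(z_0,\cdot)$ on $\partial\Omega_t$ is uniformly bounded above (boundary Harnack principle applied to the smooth family). Hence
$$\psi(t):=\int_{\{\delta_\Omega=t\}}|f|^2\,d\sigma\ \ge\ c_0>0$$
for all small $t$, and the coarea formula together with $|\nabla\delta_\Omega|\equiv 1$ gives
$$\int_\Omega|f|^2\,\delta_\Omega^{-1}(1+|\log\delta_\Omega|)^{-1}\,dV\ \ge\ c_0\int_0^{d_0}\frac{dt}{t(1+|\log t|)}\ =\ +\infty,$$
contradicting the hypothesis; so $f\equiv 0$. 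The special case $A^2_\alpha(\Omega)=\{0\}$ for $\alpha\ge 1$ then follows from the trivial pointwise inequality $\delta_\Omega^{-\alpha}\ge\delta_\Omega^{-1}(1+|\log\delta_\Omega|)^{-1}$ near $\partial\Omega$.

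For assertion (2), the plan is to invoke Bedford--Taylor pluripotential theory. Let $u_\epsilon^*$ denote the relative extremal plurisubharmonic function of $\overline\Omega_\epsilon$ in $\Omega$: $u_\epsilon^*\equiv -1$ on $\overline\Omega_\epsilon$, $-1\le u_\epsilon^*\le 0$ on $\Omega$, and $\int_\Omega(dd^cu_\epsilon^*)^n=c(\epsilon)$. Supposing, for contradiction, $f\in A^2_\alpha(\Omega)\setminus\{0\}$ with $f(z_0)\ne 0$, I would exploit the fact that the Monge--Amp\`ere mass of $u_\epsilon^*$ is supported on $\partial\Omega_\epsilon\subset\Omega\setminus\Omega_{\epsilon/2}$, where $\delta_\Omega^{-\alpha}\ge(\epsilon/2)^{-\alpha}$, so that a Chern--Levine--Nirenberg comparison of $u_\epsilon^*$ with $\log|f|^2$ is expected to yield an inequality of shape
$$|f(z_0)|^2\ \le\ C\,c(\epsilon_j)\,\epsilon_j^{\alpha}\,\|f\|_\alpha^2.$$
Since the hypothesis $c(\epsilon_j)\,\epsilon_j^{\alpha}=O(1)$ only bounds the right-hand side, the contradiction must be closed by an iteration step---e.g.\ by replacing $f$ with the normalized reproducing-kernel sections $K_\alpha(\cdot,z)/\sqrt{K_\alpha(z,z)}$ and letting $z$ range through $\Omega$, so as to force the weighted Bergman kernel $K_\alpha(z,z)$ to vanish identically, which is equivalent to $A^2_\alpha(\Omega)=\{0\}$. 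The main obstacle is exactly this capacity-weighted pluripotential estimate together with the iteration argument closing the contradiction; by contrast, the only subtlety in part (1) is the uniform Poisson-kernel bound, which is classical for $C^2$ bounded domains.
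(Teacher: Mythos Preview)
Your argument for part (1) is essentially correct \emph{provided} $\partial\Omega$ is $C^2$, but the theorem is stated for an \emph{arbitrary} bounded domain. You invoke a tubular neighborhood with $|\nabla\delta_\Omega|\equiv 1$, the smooth family $\{\Omega_t\}$, the coarea formula, and uniform Poisson-kernel bounds---none of which is available without boundary regularity. The paper avoids this by a different mechanism: it mollifies $\delta_\Omega$ to a $C^1$ function $\delta_{\varepsilon_1}$ with $|\nabla\delta_{\varepsilon_1}|\le C_n$, then applies the Bochner--Martinelli formula to $\chi\bigl(\log\log(1/\delta_{\varepsilon_1})-\log\log(1/\varepsilon)\bigr)f^2$. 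The boundary term in B--M vanishes because of the cutoff, and the interior term is bounded by $\int_{\Omega_{\varepsilon/2}\setminus\Omega_{2\sqrt{\varepsilon}}}|f|^2\delta_\Omega^{-1}|\log\delta_\Omega|^{-1}\,dV$, which tends to $0$ as $\varepsilon\to 0$. No harmonic measure, no Poisson kernel, and no regularity of $\partial\Omega$ are needed.

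For part (2) there are two genuine problems. First, the capacity in the statement is the classical variational capacity
\[
{\rm cap}(M,\Omega)=\inf\Bigl\{\int_\Omega|\nabla\phi|^2\,dV:\ \phi\in C_0^\infty(\Omega),\ 0\le\phi\le1,\ \phi\equiv1\text{ near }M\Bigr\},
\]
not the Bedford--Taylor Monge--Amp\`ere capacity; your pluripotential approach is aimed at the wrong object. Second, even granting an inequality of the form $|f(z_0)|^2\le C\,c(\varepsilon_j)\varepsilon_j^{\alpha}\|f\|_\alpha^2$, the hypothesis $c(\varepsilon_j)\varepsilon_j^{\alpha}=O(1)$ gives no contradiction, and the proposed ``iteration with reproducing kernels'' does not close the gap. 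The paper's proof again uses Bochner--Martinelli, now applied to $\phi_j f$ with a near-extremal cutoff $\phi_j$ for $c(\varepsilon_j)$, and the decisive observation is that the resulting bound is
\[
|f(z_0)|\le C\,c(\varepsilon_j)^{1/2}\varepsilon_j^{\alpha/2}\left(\int_{\Omega\setminus\Omega_{\varepsilon_j}}|f|^2\delta_\Omega^{-\alpha}\,dV\right)^{1/2},
\]
where the last factor is a \emph{tail} of a convergent integral and hence tends to $0$. That is what forces $f(z_0)=0$; the $O(1)$ factor alone would not suffice.
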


As a consequence Theorem~\ref{th:vanish}, we have

\begin{theorem}\label{th:hyperconvex}
 Let $\Omega\subset {\mathbb C}^n$ be a bounded domain. For each $\varepsilon>0$, there does not exist a continuous psh function $\rho<0$ on $\Omega$ such that
 $$
 -\rho\le {\rm const}_\varepsilon \delta_\Omega\left(1+|\log \delta_\Omega|\right)^{-\varepsilon}.
 $$
 \end{theorem}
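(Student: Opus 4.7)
The plan is to argue by contradiction using Theorem~\ref{th:vanish}(1). Suppose, for some $\varepsilon>0$, such a $\rho$ exists. Since $\rho<0$ is continuous psh with $-\rho\to 0$ at $\partial\Omega$, $\rho$ is a continuous negative psh exhaustion, and $\Omega$ is hyperconvex. A direct computation shows that $\psi:=-\log(-\rho)$ is psh on $\Omega$ and satisfies the self-bounded-gradient relation
\begin{equation*}
i\partial\dbar\psi\ge i\partial\psi\wedge\dbar\psi
\end{equation*}
in the sense of currents---exactly the borderline Donnelly--Fefferman regime.

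Fix an interior point $z_0\in\Omega$ and a cutoff $\chi\in C^\infty_c(\Omega)$ with $\chi\equiv 1$ near $z_0$, and set $v:=\dbar\chi$, a smooth $\dbar$-closed $(0,1)$-form supported compactly in $\Omega\setminus\{z_0\}$. Applying the Donnelly--Fefferman/Berndtsson--Charpentier type $L^2$ estimate with weight $e^{\psi}=(-\rho)^{-1}$ (valid on any bounded pseudoconvex domain after standard regularization), twisted by the singular psh weight $2n\log|z-z_0|$ to force vanishing at $z_0$, I solve $\dbar u=v$ on $\Omega$ with
\begin{equation*}
\int_\Omega|u|^2(-\rho)^{-1}|z-z_0|^{-2n}\,dV\le C\int_\Omega|v|^2_{\,i\partial\dbar(\psi+2n\log|z-z_0|)}(-\rho)^{-1}|z-z_0|^{-2n}\,dV,
\end{equation*}
whose right-hand side is finite since $v$ is smooth and supported compactly in $\Omega\setminus\{z_0\}$. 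The singularity $|z-z_0|^{-2n}$ on the left forces $u(z_0)=0$, so $f:=\chi-u\in\mathcal O(\Omega)$ satisfies $f(z_0)=1$ and $\int_\Omega|f|^2(-\rho)^{-1}\,dV<\infty$.

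The hypothesis on $\rho$ now gives, for $\varepsilon>0$ and $z$ near $\partial\Omega$,
\begin{equation*}
(-\rho(z))^{-1}\ge C^{-1}\delta_\Omega(z)^{-1}(1+|\log\delta_\Omega(z)|)^{\varepsilon}\ge C^{-1}\delta_\Omega(z)^{-1}(1+|\log\delta_\Omega(z)|)^{-1},
\end{equation*}
so $\int_\Omega|f|^2\delta_\Omega^{-1}(1+|\log\delta_\Omega|)^{-1}\,dV<\infty$. Theorem~\ref{th:vanish}(1) then forces $f\equiv 0$, contradicting $f(z_0)=1$.

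The main obstacle is technical: $\psi=-\log(-\rho)$ is only continuous and satisfies the self-boundedness condition with the borderline constant $\alpha=1$, whereas the Berndtsson--Charpentier estimate delivers a finite constant only for $\alpha<1$. This is handled by approximation: replace $\psi$ by $\psi_t:=-t\log(-\rho)$ with $t<1$ (which satisfies the condition with $\alpha=t$), carry out the Hörmander-type construction on smooth pseudoconvex subdomains $\Omega_j\Subset\Omega$ obtained by smoothing the sublevel sets $\{\rho<-1/j\}$, and pass to a weak limit letting first $j\to\infty$ and then $t\nearrow 1$. Uniform control from the Donnelly--Fefferman estimate (coupled with the compact support of $v$ away from both $\partial\Omega$ and $z_0$) allows a diagonal extraction yielding a limiting $f$ with the required properties at the endpoint $t=1$.
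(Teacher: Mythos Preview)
Your overall strategy coincides with the paper's: argue by contradiction, build a nonzero holomorphic $f$ with $f(z_0)=1$ by solving $\bar\partial u=\bar\partial\chi$ with the singular weight $2n\log|z-z_0|$, and then invoke Theorem~\ref{th:vanish}(1). The gap is in the last paragraph, where you claim ``uniform control'' in the Donnelly--Fefferman estimate as $t\nearrow 1$. That control is not uniform. With $\psi_t=-t\log(-\rho)$ one has $i\,t\,\partial\bar\partial\psi_t\ge i\partial\psi_t\wedge\bar\partial\psi_t$, i.e.\ the self-bounded-gradient condition holds with constant $\alpha=t$, and the constant in the Donnelly--Fefferman/Berndtsson--Charpentier estimate behaves like $(1-t)^{-1}$ (or worse). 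The right-hand side of your estimate stays bounded as $t\to 1$ because $v$ has fixed compact support, but the left-hand side bound blows up, so a weak limit yields at best $\int_\Omega|f|^2(-\rho)^{-t}\,dV<\infty$ for each fixed $t<1$, which is strictly weaker than $\int_\Omega|f|^2\delta_\Omega^{-1}(1+|\log\delta_\Omega|)^{-1}\,dV<\infty$ and does not trigger Theorem~\ref{th:vanish}(1).

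The paper resolves this by \emph{not} pushing $t\to 1$. Instead it sets $\phi=-\log(-\rho)$ and uses the modified weight
\[
\psi=\phi-\tfrac{\varepsilon}{2}\log\phi,
\]
whose point is that $|\bar\partial\psi|^2_{i\partial\bar\partial\psi}\le 1-c\,\phi^{-1}$ for large $\phi$. Running the Berndtsson--Charpentier absorption with the auxiliary factor $(1+\varepsilon/(4\phi))$ then leaves a residual $\phi^{-1}$ on the left, giving
\[
\int_\Omega |u|^2\,\phi^{-1}e^{\psi}\,|z-z_0|^{-2n}\,dV<\infty,
\]
and $\phi^{-1}e^{\psi}=(-\rho)^{-1}(-\log(-\rho))^{-1-\varepsilon/2}$. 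The hypothesis $-\rho\le C\,\delta_\Omega(1+|\log\delta_\Omega|)^{-\varepsilon}$ yields $(-\rho)^{-1}(-\log(-\rho))^{-1-\varepsilon/2}\ge c\,\delta_\Omega^{-1}|\log\delta_\Omega|^{-1}$, which is exactly the weight needed for Theorem~\ref{th:vanish}(1). In short, the $\varepsilon$ in the hypothesis is what buys the logarithmic slack that replaces the divergent constant; your proposed limit $t\to 1$ does not exploit this $\varepsilon$ and therefore cannot close.
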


In particular, the order of hyperconvexity of $\Omega$ is no larger than $1$. In case $\partial\Omega$ is of class $C^2$, this result is a direct consequence of the Hopf lemma.

\section{Proof of Theorem~\ref{th:main}}

Let $\Omega\subset\subset {\mathbb C}^n$ be a pseudoconvex domain with $C^2-$boundary. Let $\varphi$ be a real-valued $C^2-$smooth function on $\Omega$. Let  $L^{p,q}_{(2)}(\Omega,\varphi)$ denote the space of $(p,q)-$forms $u$ on $\Omega$ satisfying
$$
\|u\|^2_{\varphi}:=\int_\Omega |u|^2 e^{-\varphi} dV<\infty.
$$
Let $\bar{\partial}^\ast_{\varphi}$ denote the adjoint of the operator $\bar{\partial}$ with respect to the corresponding inner product $(\cdot,\cdot)_{\varphi}$. We recall the the following twisted Morrey-Kohn-H\"ormander formula, which goes back to Ohsawa-Takegoshi (cf. \cite{OhsawaTakegoshi87,Berndtsson96,Siu96,McNeal96,Ohsawa01,BoasStraube99}):

\begin{proposition}\label{prop:twist}
 Let $\rho$ be a $C^2-$definining function of $\Omega$. Let $u$ be a $(0,1)-$form that is continuously differentiable on $\overline{\Omega}$ and satisfies the $\bar{\partial}-$Neumann boundary conditions on $\partial \Omega$, $\partial \rho \cdot u=0$, and let $\eta$ and $\varphi$ be real-valued functions that are twice continuously differentiable on $\overline{\Omega}$ with $\eta\ge 0$. Then
\begin{eqnarray*}
\|\sqrt{\eta}\bar{\partial}u\|_\varphi^2+\|\sqrt{\eta}\bar{\partial}^\ast_\varphi u\|_\varphi^2 & = & \sum_{j,k=1}^n \int_{\partial \Omega} \eta \frac{\partial^2\rho}{\partial z_j\partial\bar{z}_k}u_j\bar{u}_k e^{-\varphi}\frac{d\sigma}{|\nabla \rho|}+\sum_{j=1}^n \int_\Omega \eta\left|\frac{\partial u_j}{\partial \bar{z}_j}\right|^2e^{-\varphi} dV\\
&& +\sum_{j,k=1}^n \int_\Omega \left(\eta\frac{\partial^2\varphi}{\partial z_j\partial\bar{z}_k}-\frac{\partial^2\eta}{\partial z_j\partial\bar{z}_k}\right)u_j\bar{u}_k e^{-\varphi} dV\\
&& +2{\rm Re} \int _\Omega (\partial \eta \cdot u) \overline{\bar{\partial}^\ast_\varphi u} e^{-\varphi} dV.
\end{eqnarray*}
\end{proposition}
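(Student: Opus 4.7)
The plan is to derive the identity by the classical Kohn--Morrey integration-by-parts method, adapted to the twisted weight $\eta e^{-\varphi}dV$: expand the two squared norms component-by-component and then transform the cross terms by integration by parts to match each summand on the right-hand side.

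Since $u$ satisfies the $\bar\partial$-Neumann condition $\partial\rho\cdot u = 0$, one has the representation $\bar\partial^\ast_\varphi u = -\sum_j \delta_j u_j$, where $\delta_j := \partial/\partial z_j - \partial\varphi/\partial z_j$ is the formal $\varphi$-weighted adjoint of $-\partial/\partial\bar z_j$. Writing out the squared norms,
$$\|\sqrt\eta\,\bar\partial u\|_\varphi^2 = \sum_{j<k}\int_\Omega \eta\left|\frac{\partial u_k}{\partial\bar z_j} - \frac{\partial u_j}{\partial\bar z_k}\right|^2 e^{-\varphi}dV,\qquad \|\sqrt\eta\,\bar\partial^\ast_\varphi u\|_\varphi^2 = \int_\Omega\eta\left|\sum_j \delta_j u_j\right|^2 e^{-\varphi}dV,$$
and expanding the squares yields the diagonal contribution $\sum_j \int_\Omega \eta|\partial u_j/\partial \bar z_j|^2 e^{-\varphi}dV$ plus off-diagonal cross terms of the forms $\int_\Omega \eta(\partial u_j/\partial \bar z_k)\overline{(\partial u_k/\partial \bar z_j)}e^{-\varphi}dV$ and $\int_\Omega \eta(\delta_j u_j)\overline{\delta_k u_k}e^{-\varphi}dV$.

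The crucial step is to transform the $\delta_j u_j$ cross terms into $\partial/\partial\bar z_k$-type terms via integration by parts, using the fundamental commutator identity $[\partial/\partial\bar z_k, \delta_j] = \partial^2\varphi/\partial z_j\partial\bar z_k$. This simultaneously produces (i) the bulk curvature integral $\eta\sum_{j,k}(\partial^2\varphi/\partial z_j\partial\bar z_k) u_j \bar u_k$, (ii) boundary contributions from Stokes' theorem, and (iii) residual terms where a derivative falls on $\eta$ rather than on $u_j$, $u_k$, or $e^{-\varphi}$. The $\eta$-derivative terms are processed by a second integration by parts, producing the $-\partial^2\eta/\partial z_j\partial\bar z_k$ correction to the Hessian together with the leftover mixed term $2\,{\rm Re}\int_\Omega(\partial\eta\cdot u)\overline{\bar\partial^\ast_\varphi u}e^{-\varphi}dV$ that cannot be absorbed further.

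The main obstacle is the boundary bookkeeping. Each individual integration by parts generates a boundary integral which does not vanish on its own; only after summation over $(j,k)$ and use of the tangential differentiation of $\partial\rho\cdot u = 0$ do the non-Hessian contributions cancel, leaving the single Levi-form integral $\int_{\partial\Omega} \eta\sum_{j,k}(\partial^2\rho/\partial z_j\partial\bar z_k) u_j \bar u_k e^{-\varphi}d\sigma/|\nabla\rho|$. Verifying this cancellation, while standard (cf.\ the Kohn, Ohsawa--Takegoshi, and Straube expositions cited in the statement), is the technically delicate step; once it is carried out, the identity follows by collecting all bulk and boundary contributions with the correct signs.
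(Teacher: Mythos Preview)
Your outline is the standard Kohn--Morrey--H\"ormander derivation with a twisting factor, and it is correct in substance. Note, however, that the paper does not actually prove this proposition: it is merely \emph{recalled} with a string of references (Ohsawa--Takegoshi, Berndtsson, Siu, McNeal, Ohsawa, Boas--Straube), so there is no ``paper's own proof'' to compare against. Your sketch is exactly the argument found in those references and would serve as a complete proof once the boundary bookkeeping you flag is written out in full.
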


Now we prove Theorem~\ref{th:main}. It is well-known that {\it locally} the Diederich-Forn{\ae}ss exponents can be arbitrarily close to $1$ (cf. \cite{DiederichFornaess77}, Remark b), p. 133). Thus for any given $\alpha<1$, there exists a cover $\{U_j\}_{1\le j\le m_\alpha}$ of $\partial \Omega$ and $C^2$ psh functions $\rho_j<0$ on $\Omega\cap U_j$ such that
$$
C^{-1}\delta_\Omega(z)^{\frac{\alpha+1}2}\le -\rho_j(z)\le C \delta_\Omega(z)^{\frac{\alpha+1}2},\ \ \ z\in \Omega\cap U_j,\ 1\le j\le m_\alpha
$$
(Throughout this section, $C$ denotes a generic positive constant depending only on $\alpha$ and $\Omega$).
Take an open subset $U_0\subset\subset \Omega$ such that $\{U_j\}_{0\le j\le m_\alpha}$ forms a cover of $\overline{\Omega}$. Clearly, we can take a negative $C^2$
psh function $\rho_0$ on $U_0$ such that
$$
C^{-1}\delta_\Omega(z)^{\frac{\alpha+1}2}\le -\rho_0(z)\le C \delta_\Omega(z)^{\frac{\alpha+1}2},\ \ \ z\in  U_0
$$
(for example, $\rho_0(z)=|z|^2-\sup_\Omega |z|^2-1$).

Put $\varphi_\tau(z)=\varphi(z)+\tau|z|^2$, $\tau>0$, and $\Omega_\varepsilon:=\{z\in \Omega:\delta_\Omega(z)>\varepsilon\}$, $\varepsilon\ll 1$. By Proposition 2.1, we have
\begin{eqnarray}
&&\int_{\Omega_\varepsilon}(\eta+c(\eta)^{-1})|\bar{\partial}^\ast_{\varphi_{\tau}} w|^2 e^{-\varphi_{\tau}}dV+\int_{\Omega_\varepsilon}\eta |\bar{\partial}w|^2 e^{-\varphi_\tau}dV\nonumber\\
& \ge & \sum_{k,l}\int_{\Omega_\varepsilon} \left(\eta\frac{\partial^2\varphi_\tau}{\partial z_k{\partial}\bar{z}_l}-\frac{\partial^2\eta}{\partial z_k{\partial}\bar{z}_l}\right) w_k\bar{w}_l\ e^{-\varphi_\tau} dV-\int_{\Omega_\varepsilon}c(\eta)\left|\sum_k \frac{\partial\eta}{\partial z_k}w_k\right|^2 e^{-\varphi_\tau}dV
\end{eqnarray}
where $w=\sum_k w_k d\bar{z}_k$ lies in ${\rm Dom\,}\bar{\partial}^\ast_{\varphi_\tau}$  and is continuously differentiable on $\overline{\Omega}_\varepsilon$ (i.e., it satisfies the $\bar{\partial}-$Neumann boundary condition on $\partial\Omega_\varepsilon$), $\eta\ge 0$, $\eta\in C^2(\Omega)$ and $c$ is a positive continuous function on ${\mathbb R}^+$.

Let $\{\chi_j\}_{0\le j\le m_\alpha}$ be a partition of unity subordinate to the cover $\{U_j\}_{0\le j\le m_\alpha}$ of $\overline{\Omega}$. The point is that $w^j=\chi_j w$ still lies in ${\rm Dom\,}\bar{\partial}^\ast_{\varphi_\tau}$. Now we choose a real-valued function $\tilde{\chi}_j\in C_0^\infty(U_j)$ so that $\tilde{\chi}_j=1$ on ${\rm supp\,}\chi_j$.
Put $\psi_j=-\frac{2\alpha}{\alpha+1}\log(-\rho_j)$. Applying (2.1) to each $w^j$ with $\eta=e^{-\tilde{\chi}_j\psi_j}$ and $c(\eta)=\frac{1-\alpha}{2\alpha}e^{\tilde{\chi}_j\psi_j}$, we get
\begin{eqnarray*}
&& \sum_{k,l}\int_{\Omega_\varepsilon\cap U_j} \frac{\partial^2\varphi_\tau}{\partial z_k{\partial}\bar{z}_l}|\chi_j|^2 w_k\bar{w}_l\ e^{-\varphi_\tau-\psi_j} dV\\
& \le & \int_{\Omega_\varepsilon\cap U_j}|\bar{\partial} (\chi_j w)|^2 e^{-\varphi_\tau-\psi_j}dV+\frac{1+\alpha}{1-\alpha}\int_{\Omega_\varepsilon\cap U_j} |\bar{\partial}^\ast_{\varphi_\tau} (\chi_j w)|^2 e^{-\varphi_\tau-\psi_j}dV
\end{eqnarray*}
because
$$
-i(\partial\bar{\partial}\eta+c(\eta)\partial\eta\wedge \bar{\partial}\eta) = i e^{-\psi_j}\left(\partial\bar{\partial}\psi_j-\frac{\alpha+1}{2\alpha}\partial\psi_j\wedge \bar{\partial}\psi_j\right)\ge 0
$$
 holds on $\Omega\cap {\rm supp\,}\chi_j$.
Since $e^{-\psi_j} \asymp \delta_\Omega^\alpha$ on $\Omega\cap U_j$, we get
\begin{equation}
 \sum_{k,l}\int_{\Omega_\varepsilon\cap U_j} \frac{\partial^2\varphi_\tau}{\partial z_k{\partial}\bar{z}_l}|\chi_j|^2 w_k\bar{w}_l\ e^{-\varphi_\tau}\delta_\Omega^\alpha dV\le C\int_{\Omega_\varepsilon\cap U_j}(|\bar{\partial} (\chi_j w)|^2+|\bar{\partial}^\ast_{\varphi_\tau} (\chi_j w)|^2) e^{-\varphi_\tau}\delta_\Omega^\alpha dV.
\end{equation}
Thus
\begin{eqnarray*}
&& \sum_{k,l}\int_{\Omega_\varepsilon} \frac{\partial^2\varphi_\tau}{\partial z_k{\partial}\bar{z}_l} w_k\bar{w}_l\ e^{-\varphi_\tau}\delta_\Omega^\alpha dV\\
& =  & \sum_{k,l}\int_{\Omega_\varepsilon} \frac{\partial^2\varphi_\tau}{\partial z_k{\partial}\bar{z}_l}\left(\sum_{j=0}^{m_\alpha} \chi_j\right)^2 w_k \bar{w}_l e^{-\varphi_\tau}\delta_\Omega^\alpha dV \\
& \le & (m_\alpha+1)\sum_{j=0}^{m_\alpha} \sum_{k,l}\int_{\Omega_\varepsilon\cap U_j} \frac{\partial^2\varphi_\tau}{\partial z_k{\partial}\bar{z}_l}|\chi_j|^2 w_k\bar{w}_l\ e^{-\varphi_\tau}\delta_\Omega^\alpha dV\\
& \le & (m_\alpha+1)C\sum_{j=0}^{m_\alpha}\int_{\Omega_\varepsilon\cap U_j}(|\bar{\partial} (\chi_j w)|^2+|\bar{\partial}^\ast_{\varphi_\tau} (\chi_j w)|^2) e^{-\varphi_\tau}\delta_\Omega^\alpha dV
\end{eqnarray*}
by (2.2).
Since
$$
\bar{\partial}(\chi_j w)=\chi_j \bar{\partial}w+\bar{\partial}\chi_j\wedge w,\ \ \ \bar{\partial}^\ast_{\varphi_\tau}(\chi_j w)=\chi_j \bar{\partial}^\ast_{\varphi_\tau}w-\bar{\partial}\chi_j\lrcorner w,
$$
thus by Schwarz's inequality,
\begin{eqnarray}
&& \sum_{k,l}\int_{\Omega_\varepsilon} \frac{\partial^2\varphi_\tau}{\partial z_k{\partial}\bar{z}_l} w_k\bar{w}_l\ e^{-\varphi_\tau}\delta_\Omega^\alpha dV\nonumber\\
& \le & 2 (m_\alpha+1)C\sum_{j=0}^{m_\alpha}\int_{\Omega_\varepsilon\cap U_j}(|\bar{\partial}  w|^2+|\bar{\partial}^\ast_{\varphi_\tau}  w|^2+2|w|^2 |\bar{\partial}\chi_j|^2)e^{-\varphi_\tau}\delta_\Omega^{\alpha} dV\nonumber\\
& \le & 2 (m_\alpha+1)^2C\int_{\Omega_\varepsilon}(|\bar{\partial}  w|^2+|\bar{\partial}^\ast_{\varphi_\tau}  w|^2)e^{-\varphi_\tau}\delta_\Omega^{\alpha}\nonumber dV\\
&&+4(m_\alpha+1)C\int_{\Omega_\varepsilon} |w|^2 \sum_j |\bar{\partial}\chi_j|^2e^{-\varphi_\tau}\delta_\Omega^{\alpha} dV.
\end{eqnarray}
Since $\partial\bar{\partial}\varphi_\tau=\partial\bar{\partial}\varphi+\tau\partial\bar{\partial}|z|^2$, thus when $\tau=\tau(\alpha,\Omega)$ is sufficiently large, the term in (2.3) may be absorbed by the left-hand side and we get the following basic inequality
\begin{equation}
\sum_{k,l}\int_{\Omega_\varepsilon} \frac{\partial^2\varphi}{\partial z_k{\partial}\bar{z}_l} w_k\bar{w}_l\ e^{-\varphi_\tau}\delta_\Omega^\alpha dV\le C\int_{\Omega_\varepsilon}(|\bar{\partial}  w|^2+|\bar{\partial}^\ast_{\varphi_\tau}  w|^2)e^{-\varphi_\tau}\delta_\Omega^{\alpha} dV.
\end{equation}
The remaining argument is standard. By H\"ormander \cite{Hormander65}, Proposition 2.1.1, the same inequality holds for any $w\in L^{0,1}_{(2)} (\Omega_\varepsilon,\varphi_\tau)\cap  {\rm Dom\,}\bar{\partial}\,\cap {\rm Dom\,}\bar{\partial}^\ast_{\varphi_\tau}$ (Note that  $C_\varepsilon^{-1}\le \delta_\Omega^\alpha \le C_\varepsilon$ on $\Omega_\varepsilon$). In particular, if $\bar{\partial}w=0$, then
$$
\sum_{k,l}\int_{\Omega_\varepsilon} \frac{\partial^2\varphi}{\partial z_k{\partial}\bar{z}_l} w_k\bar{w}_l\ e^{-\varphi_\tau}\delta_\Omega^\alpha dV\le C\int_{\Omega_\varepsilon} |\bar{\partial}^\ast_{\varphi_\tau}  w|^2 e^{-\varphi_\tau}\delta_\Omega^{\alpha} dV.
$$
By Schwarz's inequality,
\begin{eqnarray*}
\left|\int_{\Omega_\varepsilon} \langle {v},w\rangle e^{-\varphi_\tau}dV\right|^2 & \le & \int_{\Omega_\varepsilon} |v|^2_{i\partial\bar{\partial}\varphi}e^{-\varphi_\tau}\delta_\Omega^{-\alpha} dV\sum_{k,l}\int_{\Omega_\varepsilon} \frac{\partial^2\varphi}{\partial z_k{\partial}\bar{z}_l} w_k\bar{w}_l\ e^{-\varphi_\tau}\delta_\Omega^\alpha dV\\
& \le & C \int_\Omega |v|^2_{i\partial\bar{\partial}\varphi}e^{-\varphi_\tau}\delta_\Omega^{-\alpha} dV\int_{\Omega_\varepsilon} |\bar{\partial}^\ast_{\varphi_\tau}  w|^2 e^{-\varphi_\tau}\delta_\Omega^{\alpha} dV.
\end{eqnarray*}
For general $w\in {\rm Dom\,}\bar{\partial}^\ast_{\varphi_\tau}$, one has the orthogonal decomposition $w=w_1+w_2$ where $w_1\in {\rm Ker\,}\bar{\partial}$ and $w_2\in ({\rm Ker\,}\bar{\partial})^\bot\subset {\rm Ker\,}\bar{\partial}^\ast_{\varphi_\tau}$. Thus
\begin{eqnarray*}
&&\left|\int_{\Omega_\varepsilon} \langle {v},w\rangle e^{-\varphi_\tau}dV\right|^2 =  \left|\int_{\Omega_\varepsilon} \langle {v},w_1\rangle e^{-\varphi_\tau}dV\right|^2\\
& \le & C \int_\Omega |v|^2_{i\partial\bar{\partial}\varphi}e^{-\varphi_\tau}\delta_\Omega^{-\alpha} dV\int_{\Omega_\varepsilon} |\bar{\partial}^\ast_{\varphi_\tau}  w_1|^2 e^{-\varphi_\tau}\delta_\Omega^{\alpha} dV \\
& = & C \int_\Omega |v|^2_{i\partial\bar{\partial}\varphi}e^{-\varphi_\tau}\delta_\Omega^{-\alpha} dV\int_{\Omega_\varepsilon} |\bar{\partial}^\ast_{\varphi_\tau}  w|^2 e^{-\varphi_\tau}\delta_\Omega^{\alpha} dV.
\end{eqnarray*}
Applying the Hahn-Banach theorem to the anti-linear map
$$
\delta_\Omega^{\frac{\alpha}2}\bar{\partial}^\ast_{\varphi_\tau}w\mapsto \int_{\Omega_\varepsilon} \langle {v},w\rangle e^{-\varphi_\tau}dV
$$
together with the Riesz representation theorem, we get a solution $u_\varepsilon$ of the equation
$
\bar{\partial}(\delta_\Omega^{\frac{\alpha}2}{u}_\varepsilon)={v}
$
on $\Omega_\varepsilon$
with the estimate
$$
\int_{\Omega_\varepsilon} |{u}_\varepsilon|^2 e^{-\varphi_\tau}dV\le C\int_\Omega |v|^2_{i\partial\bar{\partial}\varphi}e^{-\varphi_\tau}\delta_\Omega^{-\alpha} dV.
$$
Taking a weak limit of  $\delta_\Omega^{\frac{\alpha}2} u_\varepsilon$ as $\varepsilon\rightarrow 0+$, we immediately obtain the desired solution. Q.E.D.

\begin{remark}\label{rk:kohn}

\begin{enumerate}
\item The additional weight $t|z|^2$ is somewhat inspired by Kohn \cite{Kohn73}.

 \item The following variation of Theorem 1.1 is more convenient for applications, which may be proved similarly, together with an additional approximation argument.
\end{enumerate}
\end{remark}

\begin{theorem}\label{th:main2}
Let $\Omega\subset\subset {\mathbb C}^n$ be a pseudoconvex domain with $C^2-$boundary and let $\hat{\Omega}\subset \Omega$ be a pseudoconvex domain. Let $\varphi$  be a  psh function on $\hat{\Omega}$ such that $i\partial\bar{\partial}\varphi\ge i\partial\bar{\partial}\psi$ in the sense of distribution, where $\psi$ is a $C^2$ psh function on $\hat{\Omega}$. Then for each $\alpha<1$ and each $\bar{\partial}-$closed $(0,1)-$form $v$ with $
\int_{\hat{\Omega}} |v|^2_{i\partial\bar{\partial}\psi}e^{-\varphi}\delta_\Omega^{-\alpha}dV<\infty,
$
there is a solution $u$ to the equation  $\bar{\partial}u=v$ on $\hat{\Omega}$ such that
$$
\int_{\hat{\Omega}} |u|^2e^{-\varphi}\delta_\Omega^{-\alpha}dV\le {\rm const}_{\alpha,\Omega}\int_{\hat{\Omega}} |v|^2_{i\partial\bar{\partial}\psi}e^{-\varphi}\delta_\Omega^{-\alpha}dV.
$$
\end{theorem}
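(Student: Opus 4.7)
The plan is to reduce Theorem~\ref{th:main2} to the proof of Theorem~\ref{th:main} via two approximations (of the domain and of the weight), followed by a careful weak-limit argument. First, since $\hat{\Omega}$ is pseudoconvex, exhaust it by relatively compact pseudoconvex subdomains $\hat{\Omega}_\nu\subset\subset\hat{\Omega}_{\nu+1}$ with $C^\infty$-boundary, for example as sublevel sets of a Richberg-type smooth strictly plurisubharmonic exhaustion function of $\hat{\Omega}$. Second, observe that the hypothesis $i\partial\bar{\partial}\varphi\ge i\partial\bar{\partial}\psi$ makes $\varphi-\psi$ plurisubharmonic on $\hat{\Omega}$; convolve it with a standard mollifier to obtain smooth psh $(\varphi-\psi)_\nu\searrow \varphi-\psi$ on $\hat{\Omega}_\nu$, and set $\varphi_\nu:=(\varphi-\psi)_\nu+\psi$. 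Then $\varphi_\nu$ is $C^2$ psh on $\hat{\Omega}_\nu$, $\varphi_\nu\searrow\varphi$ pointwise, and $i\partial\bar{\partial}\varphi_\nu\ge i\partial\bar{\partial}\psi$ classically.

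On each pair $(\hat{\Omega}_\nu,\varphi_\nu)$ the proof of Theorem~\ref{th:main} transplants essentially verbatim. The partition of unity $\{\chi_j\}_{0\le j\le m_\alpha}$ and the local psh functions $\rho_j$ with $-\rho_j\asymp\delta_\Omega^{(\alpha+1)/2}$ are attached to the outer $C^2$ domain $\Omega$, hence still usable on $\hat{\Omega}_\nu\subset\Omega$; the boundary integral in Proposition~\ref{prop:twist} along $\partial\hat{\Omega}_\nu$ is nonnegative by pseudoconvexity of $\hat{\Omega}_\nu$. Choosing $\tau=\tau(\alpha,\Omega)$ large enough (independently of $\nu$) to absorb the cutoff error, the same computations yield the basic inequality
\[
\sum_{k,l}\int_{\hat{\Omega}_\nu}\frac{\partial^2\varphi_\nu}{\partial z_k\partial\bar{z}_l}w_k\bar{w}_l\,e^{-\varphi_\nu-\tau|z|^2}\delta_\Omega^\alpha\,dV\le C\int_{\hat{\Omega}_\nu}\bigl(|\bar{\partial}w|^2+|\bar{\partial}^{\ast}w|^2\bigr)e^{-\varphi_\nu-\tau|z|^2}\delta_\Omega^\alpha\,dV.
\]
In the Schwarz step, the matrix inequality $i\partial\bar{\partial}\varphi_\nu\ge i\partial\bar{\partial}\psi$ inverts to $|v|^2_{i\partial\bar{\partial}\varphi_\nu}\le |v|^2_{i\partial\bar{\partial}\psi}$, and the Hahn--Banach/Riesz machinery then produces a solution $u_\nu$ of $\bar{\partial}u_\nu=v$ on $\hat{\Omega}_\nu$ with
\[
\int_{\hat{\Omega}_\nu}|u_\nu|^2 e^{-\varphi_\nu}\delta_\Omega^{-\alpha}\,dV\le C\int_{\hat{\Omega}_\nu}|v|^2_{i\partial\bar{\partial}\psi}e^{-\varphi_\nu}\delta_\Omega^{-\alpha}\,dV.
\]

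The subtle point, and the main obstacle, is extracting the limit $u$ with the correct weight. Since $\varphi_\nu\ge\varphi$, the $a\ priori$ bound has $e^{-\varphi_\nu}\le e^{-\varphi}$ on the left, which is \emph{weaker} than the desired estimate; the right-hand side, on the other hand, is uniformly bounded by $C\int_{\hat{\Omega}}|v|^2_{i\partial\bar{\partial}\psi}e^{-\varphi}\delta_\Omega^{-\alpha}\,dV<\infty$. Local boundedness of $\varphi$ makes $\{u_\nu\}$ locally $L^2$-bounded on $\hat{\Omega}$, so after extraction $u_\nu\rightharpoonup u$ weakly in $L^2_{\mathrm{loc}}(\hat{\Omega})$ with $\bar{\partial}u=v$ distributionally. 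To recover the sharp estimate, fix $\nu_0$: for $\nu\ge\nu_0$ we have $\varphi_\nu\le\varphi_{\nu_0}$ on $\hat{\Omega}_{\nu_0}$, so
\[
\int_{\hat{\Omega}_{\nu_0}}|u_\nu|^2 e^{-\varphi_{\nu_0}}\delta_\Omega^{-\alpha}\,dV\le \int_{\hat{\Omega}_\nu}|u_\nu|^2 e^{-\varphi_\nu}\delta_\Omega^{-\alpha}\,dV\le C\int_{\hat{\Omega}}|v|^2_{i\partial\bar{\partial}\psi}e^{-\varphi}\delta_\Omega^{-\alpha}\,dV.
\]
Weak lower semicontinuity with the now fixed weight $e^{-\varphi_{\nu_0}}\delta_\Omega^{-\alpha}$ (which is bounded on the compact set $\overline{\hat{\Omega}_{\nu_0}}\subset\hat{\Omega}$) transfers this bound to $u$; letting $\nu_0\to\infty$ and invoking monotone convergence ($e^{-\varphi_{\nu_0}}\nearrow e^{-\varphi}$, $\hat{\Omega}_{\nu_0}\nearrow\hat{\Omega}$) completes the proof.
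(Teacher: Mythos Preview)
Your proposal is correct and follows exactly the approach the paper indicates: the paper does not give a detailed proof of Theorem~\ref{th:main2} but merely states that it ``may be proved similarly, together with an additional approximation argument,'' and your two-step approximation (exhaust $\hat\Omega$ by smooth pseudoconvex $\hat\Omega_\nu$, regularize $\varphi$ via mollification of the psh function $\varphi-\psi$) together with the monotone weak-limit extraction is precisely the natural way to fill in those details. The only minor point worth noting is that your transplanted basic inequality should display $\bar\partial^\ast_{\varphi_\nu+\tau|z|^2}$ rather than an unspecified $\bar\partial^\ast$, but this is cosmetic.
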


\section{Some consequences of Theorem 1.1}

3.1. We first prove Theorem~\ref{th:corona}. Following Wolff's approach to Carleson's theorem (cf. \cite{Garnett07}, p. 315), we put
$$
g_1=h\frac{\bar{f}_1}{|f|^2}-uf_2,\ \ \ g_2=h\frac{\bar{f}_2}{|f|^2}+uf_1
$$
where $|f|^2=|f_1|^2+|f_2|^2$. Clearly, $f_1g_1+f_2g_2=h$, so the problem is reduced to choose $u\in L^2_\alpha(\Omega)$, i.e., $\int_\Omega |u|^2 \delta_\Omega^{-\alpha}dV<\infty$, so that $g_1,g_2$ are holomorphic. Thus it suffices to solve
$$
\bar{\partial}u=h\frac{\overline{f_2\partial f_1}-\overline{f_1\partial f_2}}{|f|^4}=:v
$$
such that $u\in L^2_\alpha(\Omega)$. Applying Theorem~\ref{th:main} with $\varphi=\log |f|^2$, we get a solution $u$ satisfying
$$
\int_\Omega |u|^2|f|^{-2}\delta_\Omega^{-\alpha}dV\le {\rm const}_{\alpha,\Omega}\int_\Omega |v|^2_{i\partial\bar{\partial}\varphi}|f|^{-2}\delta_\Omega^{-\alpha}dV.
$$
A straightforward calculation shows
$$
\partial\bar{\partial}\varphi=\frac{(f_1\partial f_2-f_2\partial f_1)\wedge \overline{(f_1\partial f_2-f_2\partial f_1)}}{|f|^4}
$$
so that $|v|^2_{i\partial\bar{\partial}\varphi}\le  |h|^2/|f|^{4}\le |h|^2/\delta^{4}$. Thus
$$
\int_\Omega |u|^2 \delta_\Omega^{-\alpha}dV\le {\rm const}_{\alpha,\Omega}\, \delta^{-6}\int_\Omega |h|^2\delta_\Omega^{-\alpha} dV.
$$
Q.E.D.

\medskip

 3.2. Next we prove Theorem~\ref{th:gleason}. The argument is a slightly modification of 3.1. Without loss of generality, we assume $w=0$, $h(0)=0$, $|z|^2<e^{-1}$ on $\Omega$. Put $f_k=z_k$, $k=1,2$ and $\varphi=-\log(-\log|f|^2)$. Then we have
$$
\partial\bar{\partial}\varphi\ge \frac{(f_1\partial f_2-f_2\partial f_1)\wedge \overline{(f_1\partial f_2-f_2\partial f_1)}}{|f|^4(-\log|f|^2)}.
$$
Let $g_k$, $v$ be defined as above and put
$\hat{\Omega}=\Omega\backslash \{f_1=0\}$. By Theorem~\ref{th:main2}, we may solve the equation $\bar{\partial}u=v$ on $\hat{\Omega}$ such that
$$
\int_{\hat{\Omega}} |u|^2 \delta_\Omega^{-\alpha} dV\le \int_{\hat{\Omega}} |u|^2e^{-\varphi}\delta^{-\alpha}_\Omega dV\le {\rm const}_{\alpha,\Omega}\int_{\hat{\Omega}} |v|^2_{i\partial\bar{\partial}\varphi} e^{-\varphi}\delta_\Omega^{-\alpha} dV
$$
since the last term is bounded by
\begin{eqnarray*}
&& {\rm const}_{\alpha,\Omega}\int_{\hat{\Omega}}|h|^2|f|^{-4}(\log |f|^2)^2 \delta_\Omega^{-\alpha} dV\\
& = & {\rm const}_{\alpha,\Omega}\int_{\hat{\Omega}\cap \{|z|<\varepsilon\}}|h|^2|f|^{-4}(\log |f|^2)^2 \delta_\Omega^{-\alpha} dV+{\rm const}_{\alpha,\Omega}\int_{\hat{\Omega}\backslash \{|z|<\varepsilon\}}|h|^2|f|^{-4}(\log |f|^2)^2 \delta_\Omega^{-\alpha} dV\\
& \le & {\rm const}_{\alpha,\Omega} \int_{\{|z|<\varepsilon\}} |z|^{-2}(\log |z|)^2dV+{\rm const}_{\alpha,\Omega}\int_{\Omega} |h|^2\delta_\Omega^{-\alpha} dV<\infty
\end{eqnarray*}
where $\varepsilon>0$ is so small that $\{|z|\le \varepsilon\}\subset \Omega$. Thus $g_1,g_2$ are holomorphic on $\hat{\Omega}$ such that
$$
\int_{\hat{\Omega}}|g_k|^2\delta_\Omega^{-\alpha}dV<\infty, \ \ \ k=1,2.
$$
The assertion follows immediately from Riemann's removable singularities theorem. Q.E.D.

\begin{remark}\label{rk:hormander}
It is possible to extend both the Corona and Gleason type theorems to general cases by using the Koszul complex technique introduced by H\"ormander \cite{Hormander67a}. But the argument will be substantially longer and not very enlightening, so that we shall not treat here.
\end{remark}

3.3. Finally, we prove Theorem~\ref{th:density}. (a) Let $K$ be a compact subset of $\Omega$ and $f\in {\mathcal O}(\Omega)$. We take a strictly psh exhaustion function $\psi\in C^\infty(\Omega)$ such that $K\subset \{\psi<0\}$. Let $\kappa$ be a $C^\infty$ convex increasing function such that $\kappa=0$ on $(-\infty,0]$ and $\kappa'>0$, $\kappa''>0$ on $(0,+\infty)$. Let $\rho<0$ be a bounded strictly psh exhaustion function on $\Omega$. Choose $\varepsilon>0$ so small that $\{\psi\le 0\}\subset \{\rho<-\varepsilon\}$. Let $\chi\in C^\infty_0(\Omega)$ be a real-valued function satisfying $\chi=1$ in a neighborhood of $\{\rho\le -\varepsilon\}$.   We construct a $2-$parameter family of weight functions as follows
$$
\varphi_{t,s}(z)=|z|^2+t\chi(z)\kappa( \psi(z))+s\kappa(\rho(z)+\varepsilon),\ \ \ t,s>0.
$$
It is easy to see that for any $t>0$ there is a sufficiently large number $s=s(t)>0$ such that $\partial\bar{\partial}\varphi_{t,s}\ge \partial\bar{\partial}|z|^2$. Let $\hat{\chi}\in C^\infty_0(\Omega)$ such that $\hat{\chi}=1$ in a neighborhood of $\{\psi\le 0\}$ and $\hat{\chi}(z)=0$ if $\rho(z)\ge -\varepsilon$. By Theorem 1.1, we may solve the equation
$$
\bar{\partial}u_t=f\bar{\partial}\hat{\chi}
$$
such that
\begin{eqnarray*}
\int_\Omega |u_t|^2 e^{-\varphi_{t,s}}\delta_\Omega^{-\alpha}dV &\le & {\rm const}_{\alpha,\Omega}\int_\Omega |f|^2|\bar{\partial}\hat{\chi}|^2 e^{-\varphi_{t,s}}\delta_\Omega^{-\alpha}dV\\
& \le &
 {\rm const}_{\alpha,\Omega}\int_{{\rm supp\,}\bar{\partial}\hat{\chi}}|f|^2e^{-t\kappa\circ \psi}\delta_\Omega^{-\alpha}dV\rightarrow 0
\end{eqnarray*}
as $t\rightarrow +\infty$. Since $\varphi_{t,s}(z)=|z|^2$ whenever $\psi(z)\le 0$, we conclude that
$$
\int_{\{\psi\le 0\}} |u_t|^2dV\rightarrow 0
$$
as $t\rightarrow +\infty$, so is the function $f_t-f$ where $f_t:=\hat{\chi}f-u_t$. On the other hand, $f_t\in A^2_\alpha(\Omega)$ because $\varphi_{t,s}$ is a bounded function. Since $f_t-f$ is holomorphic on $\{\psi<0\}$, a standard compactness argument yields
$$
\sup_K |f_t-f|\rightarrow 0
$$
as $t\rightarrow +\infty$.

(b) We take a $C^2$ psh function $\rho<0$ on $\Omega$ such that $-\rho\asymp \delta_\Omega^a$ for some $a>0$. Let $0\le \tilde{\chi}\le 1$ be a cut-off function on ${\mathbb R}$ such that $\tilde{\chi}|_{(-\infty,-\log 2)}=1$ and $\tilde{\chi}|_{(0,\infty)}=0$. Let $f\in A^2_{\alpha_1}(\Omega)$ be given. For each $\varepsilon>0$, we define
$$
v_\varepsilon=f\bar{\partial}\tilde{\chi}(-\log(-\rho+\varepsilon)+\log 2\varepsilon),\ \ \ \varphi_\varepsilon=-\frac{\alpha_2-\alpha_1}a \log(-\rho+\varepsilon).
$$
By Theorem 1.1, we have a solution of $\bar{\partial}u_\varepsilon=v_\varepsilon$ so that
\begin{eqnarray*}
\int_\Omega |u_\varepsilon|^2 e^{-\varphi_\varepsilon} \delta_\Omega^{-\alpha_2}dV & \le & {\rm const.}\int_\Omega |v_\varepsilon|^2_{i\partial\bar{\partial}\varphi_\varepsilon}e^{-\varphi_\varepsilon}\delta_\Omega^{-\alpha_2}dV\\
& \le & {\rm const.} \int_{\varepsilon\le -\rho\le 3\varepsilon}|f|^2 \delta_\Omega^{-\alpha_1}dV
\end{eqnarray*}
for $i\partial\bar{\partial}\varphi_\varepsilon\ge \frac{\alpha_2-\alpha_1}a i\partial \log (-\rho+\varepsilon)\wedge \bar{\partial}\log (-\rho+\varepsilon)$. Put
$$
f_\varepsilon=f\tilde{\chi} (-\log(-\rho+\varepsilon)+\log 2\varepsilon)-u_\varepsilon.
$$
Since $\varphi_\varepsilon$ is bounded and
$$
e^{-\varphi_\varepsilon}\ge e^{\frac{\alpha_2-\alpha_1}a\log (-\rho)}\asymp \delta_\Omega^{\alpha_2-\alpha_1},
$$
we conclude that $f_\varepsilon\in A^2_{\alpha_2}(\Omega)$ and
\begin{eqnarray*}
\int_\Omega |f_\varepsilon-f|^2 \delta_\Omega^{-\alpha_1}dV & \le & 2 \int_{-\rho\le 3\varepsilon}|f|^2 \delta_\Omega^{-\alpha_1}dV+2\int_\Omega |u_\varepsilon|^2\delta_\Omega^{-\alpha_1}dV\\
& \le & 2 \int_{-\rho\le 3\varepsilon}|f|^2 \delta_\Omega^{-\alpha_1}dV+{\rm const.}\int_\Omega |u_\varepsilon|^2 e^{-\varphi_\varepsilon}\delta_\Omega^{-\alpha_2}dV\\
& \le & 2 \int_{-\rho\le 3\varepsilon}|f|^2 \delta_\Omega^{-\alpha_1}dV+ {\rm const.} \int_{\varepsilon\le -\rho\le 3\varepsilon}|f|^2 \delta_\Omega^{-\alpha_1}dV\\
& \rightarrow & 0
\end{eqnarray*}
as $\varepsilon\rightarrow 0+$. Q.E.D.

\begin{problem}\label{prob:dense}
 Is the Hardy space $H^2(\Omega)$ dense in $A^2_\alpha(\Omega)$ for each $\alpha<1$?
\end{problem}

\begin{remark}\label{rk:regeree}
 The referee of this paper pointed out the following
\begin{enumerate}

\item Bell and Boas have proved a theorem related to Theorem~\ref{th:density} (cf. \cite{BellBoas}, Theorem 1\,).

 \item There is a standard argument as follows,  which is perhaps more straightforward than the author's proof. Choose a cover $\{U_j\}_{j=1}^m$ of the boundary and vectors $n_j$ such that $z-\varepsilon n_j\in \Omega$ for $1\le j\le m$, $z\in U_j$, $\varepsilon\le \varepsilon_0$. Choose $\phi_0\in C^\infty_0(\Omega)$ and $\phi_j\in C^\infty_0(U_j)$, $1\le j\le m$, with $\sum \phi_j=1$ in a neighborhood of $\overline{\Omega}$. Set
$$
f_\varepsilon(z)=\phi_0(z)f(z)+\sum_{j=1}^m \phi_j(z)f(z-\varepsilon n_j).
$$
Then $f_\varepsilon\rightarrow f$ in the norm with weight $\delta_\Omega^{-\alpha}$.
The theorem now follows by correcting $f_\varepsilon$ via
$$
\bar{\partial}f_\varepsilon=f\bar{\partial}\phi_0+\sum_{j=1}^m f(z-\varepsilon n_j)\bar{\partial}\phi_j=\sum_{j=1}^m [f(z-\varepsilon n_j)-f(z)]\bar{\partial}\phi_j
$$
(because $\sum_{j=0}^m \bar{\partial}\phi_j=0$ on $\Omega$). The norm of the right hand side tends to zero; so if we solve the $\bar{\partial}-$equation with the estimate that was shown, the corrections we make to the $f_\varepsilon$ tend to zero as well in norm, and we are done.
\end{enumerate}
\end{remark}

\section{Proof of Theorem~\ref{th:levi}}

4.1. Let $\Omega\subset {\mathbb C}^n$ be a bounded domain. We define the pluricomplex Green function $g_\Omega(\cdot,w)$ with pole at $w\in \Omega$ as
$$
g_\Omega(z,w)=\sup\left\{u(z):u\in PSH(\Omega),u<0,\limsup_{z\rightarrow w}\,(u(z)-\log|z-w|)<\infty\right\}.
$$
It is well-known that $g_\Omega(\cdot,w)\in PSH(\Omega)$ for each fixed $w$ and $g_\Omega\in C(\overline{\Omega}\times {\Omega}\backslash\{z=w\})$ when $\Omega$ is hyperconvex (cf. \cite{Klimek91}). We need the following  estimate of $g_\Omega$ due to Blocki \cite{Blocki04}:

\begin{theorem}\label{th:blocki}
 Let $\Omega\subset\subset {\mathbb C}^n$ be a pseudoconvex domain. Suppose there is a negative psh function $\rho$ on $\Omega$ satisfying
$$
C_1 \delta_\Omega^a(z)\le -\rho(z)\le C_2\delta_\Omega^b(z),\ \ \ z\in \Omega
$$
 where $C_1,C_2>0$ and $a\ge b\ge 0$ are constants. Then there are positive numbers $\delta_0,C$ such that $$
\{g_\Omega(\cdot,w)\le -1\}\subset \{C^{-1}\delta_\Omega(w)^{\frac{a}b}|\log \delta_\Omega(w)|^{-\frac1b}\le \delta_\Omega\le C\delta_\Omega(w)^{\frac{b}a}|\log\delta_\Omega(w)|^{\frac{n}a}\}
$$
 holds for any $w\in \Omega$ with $\delta_\Omega(w)\le \delta_0$.
 \end{theorem}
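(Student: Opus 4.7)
The strategy is to prove the two inclusions separately by producing, for each one, an admissible plurisubharmonic competitor $u$ in the extremal problem defining $g_\Omega(\cdot,w)$: a psh $u$ on $\Omega$ with $u<0$ and $\limsup_{z\to w}(u(z)-\log|z-w|)<\infty$. Any such $u$ satisfies $u\le g_\Omega(\cdot,w)$, so showing $u(z)>-1$ forces $z\notin\{g_\Omega(\cdot,w)\le -1\}$. The principal psh building block is the Diederich--Forn{\ae}ss type regulator $\varphi_\eta(z):=-(-\rho(z))^\eta$ for $0<\eta<1$, which is psh on $\Omega$ (it is the composition of the psh function $\rho$ with the convex, increasing function $t\mapsto-(-t)^\eta$ on $(-\infty,0)$) and negative, with $C_1^\eta\delta_\Omega^{a\eta}\le -\varphi_\eta\le C_2^\eta\delta_\Omega^{b\eta}$ by hypothesis. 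This will be combined with a log-pole contribution of the form $\log|\phi(z)|$, where $\phi$ is a bounded holomorphic function on $\Omega$ with $\phi(w)=0$.

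For the outer inclusion $\{g_\Omega(\cdot,w)\le -1\}\subset\{\delta_\Omega\le R_{\max}\}$ with $R_{\max}:=C\delta^{b/a}|\log\delta|^{n/a}$ and $\delta:=\delta_\Omega(w)$, I propose showing directly that any $z$ with $\delta_\Omega(z)>R_{\max}$ has $g_\Omega(z,w)>-1$. Since $b\le a$, one has $R_{\max}\gg\delta$ as $\delta\to 0^+$, so by the triangle inequality any such $z$ satisfies $|z-w|\ge\delta_\Omega(z)-\delta\gtrsim R_{\max}$. The admissible competitor will be built in the form $u=\log|\phi|+A_\eta\varphi_\eta$, where $\phi$ is constructed via H\"ormander's $L^2$ machinery to keep $|\phi(z)|$ bounded below on the target region (with weight tailored to the geometry near $w$), and $A_\eta,\eta$ are chosen so that $u\le 0$ on $\Omega$ while $u(z)>-1$ at the target. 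The optimal choice $\eta\sim 1/|\log\delta|$ converts the polynomial factor $\delta_\Omega(z)^{a\eta}$ from the lower envelope of $-\rho$ into an $O(1)$ constant, producing exactly the logarithmic correction $|\log\delta|^{n/a}$ in $R_{\max}$.

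For the inner inclusion $\{g_\Omega(\cdot,w)\le -1\}\subset\{\delta_\Omega\ge R_{\min}\}$ with $R_{\min}:=C^{-1}\delta^{a/b}|\log\delta|^{-1/b}$, I would similarly show that $z$ with $\delta_\Omega(z)<R_{\min}$ has $g_\Omega(z,w)>-1$. Here $z$ is extremely close to $\partial\Omega$, so the \emph{upper} envelope $-\rho(z)\le C_2\delta_\Omega(z)^b$ forces $-\varphi_\eta(z)$ to be very small, while the log-pole contribution is tempered by $|z-w|\gtrsim\delta-\delta_\Omega(z)\asymp\delta$. The same family of competitors $u=\log|\phi|+A_\eta\varphi_\eta$ is used, but with $\phi$ and parameters adapted to this boundary-hugging regime; once again the scaling $\eta\sim 1/|\log\delta|$ produces the logarithmic factor $|\log\delta|^{-1/b}$.

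The principal difficulty is two-fold. First, the auxiliary holomorphic function $\phi$ (whose two-sided size control underpins the log-pole contribution) must be constructed concretely, most naturally by solving a $\bar\partial$-equation with a carefully chosen weight; this construction depends on the geometry of $\partial\Omega$ near $w$ and is coupled to the choice of $\eta$. Second, and more delicate, is the joint optimization of $\eta$ and $A_\eta$: global non-positivity of $u$ on $\Omega$ imposes a ceiling of the form $A_\eta\sup_\Omega(-\rho)^\eta\lesssim 1$, which must be balanced against the pointwise requirement $u(z)>-1$. The asymmetric appearance of $b/a,n/a$ in the outer bound versus $a/b,-1/b$ in the inner bound directly reflects which direction of the two-sided hypothesis $C_1\delta_\Omega^a\le-\rho\le C_2\delta_\Omega^b$ is ``exposed'' at $w$ versus at $z$ in each regime, and tracking both envelopes correctly through the test-function construction is what produces the claimed exponents.
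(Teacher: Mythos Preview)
The paper does not prove this theorem; it is quoted from Blocki \cite{Blocki04} and used as a black box in \S4.2, so there is no in-paper proof to compare your proposal against.

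Independently of that, your proposal has a genuine gap. Your competitor $u=\log|\phi|+A_\eta\varphi_\eta$ is a sum of two non-positive terms: the requirement $u<0$ forces $\log|\phi|\le A_\eta(-\rho)^\eta$, hence $|\phi|\le 1$ near $\partial\Omega$, while $A_\eta\varphi_\eta\le 0$ can only push $u$ further down. Thus $u(z)>-1$ essentially forces $|\phi(z)|$ to be bounded below by a \emph{fixed} positive constant independent of $\delta_\Omega(w)$. But for holomorphic $\phi$ with $|\phi|\le 1$ and $\phi(w)=0$, the value $|\phi(z)|$ is bounded above by $\tanh c_\Omega(w,z)$, the Carath\'eodory pseudodistance, and on a general bounded pseudoconvex domain this goes to $0$ as $\delta_\Omega(w)\to 0$ even for $z$ with $\delta_\Omega(z)>R_{\max}$ (since then $|z-w|$ may still be only of order $R_{\max}\to 0$). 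Your ``construct $\phi$ via H\"ormander's $L^2$ machinery'' cannot circumvent this: $L^2$ estimates control norms from above, not pointwise values from below. In dimension $n\ge 2$ there is the additional obstruction that any such $\phi$ vanishes along an entire hypersurface through $w$, so no single $\phi$ can serve for all target $z$. Blocki's actual argument avoids holomorphic functions entirely: the competitor is a \emph{glued} psh function, equal to a multiple of $\log(|\zeta-w|/\delta_\Omega(w))$ on a small ball $B(w,c\,\delta_\Omega(w))$ and to a multiple of $-(-\rho)^\eta$ outside, patched via a maximum so that the logarithmic pole at $w$ survives; the desired bound on $\delta_\Omega(z)$ is then read off directly from $u(z)\le -1$ and the explicit outer formula for $u$. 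The scaling $\eta\sim 1/|\log\delta_\Omega(w)|$ does appear, but it arises from matching the two pieces across the gluing annulus, not from any $\bar\partial$ construction.
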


4.2. Let $K_\alpha$ be the Bergman kernel of $A^2_\alpha(\Omega)$.

\begin{proposition}\label{prop:pflug}
Suppose $\lim_{z\rightarrow \partial\Omega} K_{\alpha}(z)\eta(z)= \infty$ where $\eta$ is a positive continuous function on $\Omega$. Then there exists a function $f\in A^2_\alpha(\Omega)$ such that
$$
\limsup_{z\rightarrow \zeta}\,|f(z)|\sqrt{\eta(z)}=\infty,\ \ \  \forall\,\zeta\in \partial\Omega.
$$
\end{proposition}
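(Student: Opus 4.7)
The plan is to construct $f$ by a Baire category argument in the Hilbert space $A^2_\alpha(\Omega)$, using the extremal characterization
$$K_\alpha(z)=\sup\{|g(z)|^{2}:g\in A^2_\alpha(\Omega),\ \|g\|_\alpha\le 1\}$$
of the diagonal Bergman kernel. The hypothesis $K_\alpha(z)\eta(z)\to\infty$ supplies, for each $\zeta\in\partial\Omega$, many functions that are large in the $\sqrt{\eta}$-scale near $\zeta$; the category argument assembles these local blow-ups into a single $f$.

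For each $\zeta\in \partial\Omega$, $r>0$ and $N\in\mathbb{N}$ set
$$A_{\zeta,r,N}:=\{f\in A^2_\alpha(\Omega):|f(z)|^{2}\eta(z)\le N\text{ for all }z\in\Omega\cap B(\zeta,r)\}.$$
Since point evaluation is continuous on $A^2_\alpha(\Omega)$ and $\eta$ is continuous on $\Omega$, each $A_{\zeta,r,N}$ is closed. A function fails the desired conclusion at some $\zeta\in\partial\Omega$ precisely when it lies in some $A_{\zeta,r,N}$. The first key step is to replace this a priori uncountable union by a countable one: since $\partial\Omega$ is separable, pick a countable dense sequence $\{\zeta_k\}\subset\partial\Omega$ and note that $A_{\zeta,r,N}\subset A_{\zeta_k,r/2,N}$ whenever $|\zeta-\zeta_k|<r/2$, so the set of ``bad'' functions is
$$B=\bigcup_{k,m,N\in\mathbb{N}} A_{\zeta_k,\,1/m,\,N}.$$

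The second step, and the only place the hypothesis enters, is to verify that each $A_{\zeta_k,1/m,N}$ has empty interior. Supposing $\{f_0+g:\|g\|_\alpha\le\varepsilon\}\subset A_{\zeta_k,1/m,N}$ for some $\varepsilon>0$, apply the definition to both $f_0+g$ and $f_0-g$ and use the triangle inequality at each $z\in\Omega\cap B(\zeta_k,1/m)$:
$$|g(z)|\sqrt{\eta(z)}\le\tfrac12\bigl(|(f_0+g)(z)|+|(f_0-g)(z)|\bigr)\sqrt{\eta(z)}\le\sqrt{N}.$$
Hence $|g(z)|^{2}\eta(z)\le N$ whenever $\|g\|_\alpha\le\varepsilon$, which after homogenization and the extremal property yields $K_\alpha(z)\eta(z)\le N/\varepsilon^{2}$ on $\Omega\cap B(\zeta_k,1/m)$, contradicting the hypothesis as $z\to\zeta_k$.

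Baire's theorem then renders $B$ meager in $A^2_\alpha(\Omega)$, so any $f$ in the complement satisfies $\limsup_{z\to\zeta}|f(z)|\sqrt{\eta(z)}=\infty$ for every $\zeta\in\partial\Omega$. The main conceptual point is the reduction of $B$ to a countable union; after that, the empty-interior step reduces, via the symmetrization $g=\tfrac12((f_0+g)-(f_0-g))$, to the Bergman-kernel bound, and no further machinery (in particular, not Theorem~\ref{th:main}) is required.
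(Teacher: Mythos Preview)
Your proof is correct and takes a genuinely different route from the paper's. The paper first proves, via the Banach--Steinhaus theorem applied to the functionals $f\mapsto f(z_j)\sqrt{\eta(z_j)}$, that for each fixed boundary point $\zeta$ and each sequence $z_j\to\zeta$ there is \emph{some} $f\in A^2_\alpha(\Omega)$ with $\sup_j|f(z_j)|\sqrt{\eta(z_j)}=\infty$; it then assembles a single global $f$ by an explicit series construction $f=\sum_\nu \nu^{-3}(1+\|f_\nu\|_{K_{j_\nu}})^{-1}e^{i\theta_\nu}f_\nu$ over a carefully reordered dense sequence, choosing the phases $\theta_\nu$ and auxiliary compacts $K_{j_\nu}$ inductively so that the partial sums already dominate $\nu/\sqrt{\eta(\zeta_\nu)}$ at well-placed points $\zeta_\nu$.

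Your Baire-category argument bypasses this construction entirely: you show directly that the set of ``bad'' functions is a countable union of closed nowhere-dense sets $A_{\zeta_k,1/m,N}$, with the nowhere-dense step reducing (via the symmetrization $g=\tfrac12((f_0+g)-(f_0-g))$) to the extremal characterization of $K_\alpha$. This is shorter and yields the stronger statement that the set of $f$ satisfying the conclusion is residual in $A^2_\alpha(\Omega)$. The paper's approach, by contrast, produces an explicit $f$ and is closer in spirit to the classical Jarnicki--Pflug argument it cites. Both proofs ultimately rest on completeness of the Hilbert space (Banach--Steinhaus being itself a consequence of Baire), so the distinction is one of packaging rather than depth.
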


\begin{proof} The argument is standard (see e.g. \cite{JarnickiPflug00}, p. 416--417).  We claim that the following assertion holds:

{\it For each $\zeta\in \partial \Omega$ and each sequence of points in $\Omega$ with $z_j\rightarrow \zeta$, there exists a function $f\in A^2_\alpha(\Omega)$ such that $\sup_j|f(z_j)|\sqrt{\eta(z_j)}=\infty$.}

Suppose there is a point $\zeta\in \partial \Omega$ and a sequence of points in $\Omega$ such that $z_j\rightarrow \zeta$ such that $\sup_j|f(z_j)|\sqrt{\eta(z_j)}<\infty$, $\forall\,f\in A^2_\alpha(\Omega)$. Applying the Banach-Steinhaus theorem to the linear functional $f\rightarrow f(z_j)\sqrt{\eta(z_j)}$, we get
$$
\sup_j|f(z_j)|\sqrt{\eta(z_j)}\le {\rm const.}\|f\|
$$
for all $f\in A^2_\alpha(\Omega)$. Thus $K_\alpha(z_j)\sqrt{\eta(z_j)}\le {\rm const.}$, contradictory.

Now we construct the desired function $f$. Pick a non-decreasing sequence of compact subsets $\{K_j\}$ of $\Omega$ such that $D=\cup K_j$. Fix a dense sequence $\{z_j\}\subset \Omega$. We reorder the points of the sequence as follows
$$
z_1,z_1,z_2,z_1,z_2,z_3,z_1,\cdots
$$
and denote the new sequence by $\{w_j\}$. Put $B_j=B(w_j,\delta_\Omega(w_j))$ where $B(z,r)$ is the euclidean ball with center $z$ and radius $r$. By the above claim, we may construct inductively sequences
$$
\{j_\nu\}\subset {\mathbb Z}^+,\ \ \ \{\zeta_\nu\}\subset \Omega,\ \ \ \{\theta_\nu\}\subset {\mathbb R},\ \ \ \{f_\nu\}\subset A^2_\alpha(\Omega)
$$
such that
$$
\zeta_\nu\in (B_\nu\backslash K_{j_\nu})\cap K_{j_{\nu+1}},\ \ \ \|f_\nu\|=1,\ \ \ \left|\sum_{\mu=1}^\nu \frac{f_\mu(\zeta_\nu)e^{i\theta_\nu}}{\mu^3(1+\|f_\mu\|_{K_{j_\mu}})}\right|\ge \frac{\nu}{\sqrt{\eta(\zeta_\nu)}}
$$
where $\|f_\mu\|_{K_{j_\mu}}=\sup_{K_{j_\mu}}|f_\mu|$. It suffices to take $f(z)=\sum_{\nu=1}^\infty \frac{f_\nu(z) e^{i\theta_\nu}}{\nu^3(1+\|f_\nu\|_{K_{j_\nu}})}$.  Q.E.D.
\end{proof}

\medskip

 Now we prove Theorem~\ref{th:levi}. The argument is essentially same as \cite{ChenFu11}. Fix first an arbitrary  point $w$ sufficiently close to $\partial \Omega$. Put $g_j=\max\{g_\Omega(\cdot,w),-j\}$, $j=1,2,\cdots$. Since $\Omega$ is hyperconvex, $g_j$ is continuous on $\Omega$ and $g_j\downarrow g_\Omega(\cdot,w)$ as $j\rightarrow \infty$. By Richberg's theorem (cf. \cite{Richberg68}), there is a $C^\infty$ strictly psh function $\psi_j<0$ on $\Omega$ such that $|\psi_j(z)-g_j(z)|<1/j$, $z\in \Omega$. Put
$$
\varphi=2ng_\Omega(\cdot,w)-\log(-g_\Omega(\cdot,w)+1),\ \ \ \varphi_j=2n\psi_j-\log(-\psi_j+1).
$$
Let $\chi:{\mathbb R}\rightarrow [0,1]$ be a $C^\infty$ cut-off function satisfying $\chi|_{(-\infty,-1)}=1$ and $\chi|_{(-\log 2,\infty)}=0$. Put
$$
v_j=\bar{\partial}\chi(-\log(-\psi_j))\frac{K_\Omega(\cdot,w)}{\sqrt{K_\Omega(w)}}
$$
where $K_\Omega$ denotes the unweighted Bergman kernel of $\Omega$.
By Theorem 1.1, there is a solution of the equation $\bar{\partial}u_j=v_j$ such that
\begin{eqnarray*}
\int_\Omega |u_j|^2e^{-\varphi_j}\delta_\Omega^{-\alpha}dV & \le & {\rm const}_{\alpha,\Omega} \int_\Omega |v_j|^2_{i\partial\bar{\partial}\varphi_j}e^{-\varphi_j}\delta_\Omega^{-\alpha}dV\\
& \le & {\rm const}_{\alpha,\Omega} \int_{{\rm supp\,}\bar{\partial}\chi(\cdot)}\frac{|K_\Omega(\cdot,w)|^2}{K_\Omega(w)}\delta_\Omega^{-\alpha} dV
\end{eqnarray*}
where the second inequality follows from
$$
i\partial\bar{\partial}\varphi_j\ge \frac{i\partial \psi_j\wedge \bar{\partial}\psi_j}{(-\psi_j+1)^2}.
$$
By Blocki's theorem, we have
$$
{\rm supp\,}\bar{\partial}\chi(\cdot)\subset \{\psi_j\le -2\}\subset \{g_\Omega(\cdot,w)\le -1\}\subset \{C^{-1}\delta_\Omega(w)|\log\delta_\Omega(w)|^{-\frac1a}\le \delta_\Omega\},\ \ \ j\gg 1,
$$
where $a$ is a Diederich-Fornaess exponent for $\Omega$. Thus
$$
\int_\Omega |u_j|^2e^{-\varphi_j}\delta_\Omega^{-\alpha}dV\le {\rm const}_{\alpha,\Omega}\, \frac{|\log \delta_\Omega(w)|^{\frac{\alpha}a}}{\delta_\Omega(w)^\alpha}.
$$
Let $u$ be a weak limit of a subsequence of $\{u_j\}$. Thus
$$
f:=\chi(-\log(-g_\Omega(\cdot,w)))K_\Omega(\cdot,w)/\sqrt{K_\Omega(w)}-u
$$
is holomorphic on $\Omega$. Since $u$ is holomorphic in a neighborhood of $w$ and
$$
\int_\Omega |u|^2 e^{-\varphi}\delta_\Omega^{-\alpha}dV\le {\rm const}_{\alpha,\Omega}\,  \frac{|\log \delta_\Omega(w)|^{\frac{\alpha}a}}{\delta_\Omega(w)^\alpha},
$$
we conclude that $u(w)=0$. Thus $f(w)=\sqrt{K_\Omega(w)}$ and
$$
\int_\Omega |f|^2 \delta_\Omega^{-\alpha}dV \le {\rm const}_{\alpha,\Omega}\,  \frac{|\log \delta_\Omega(w)|^{\frac{\alpha}a}}{\delta_\Omega(w)^\alpha}.
$$
Thus
$$
K_{\alpha}(w)\ge \frac{|f(w)|^2}{\int_\Omega |f|^2 \delta_\Omega^{-\alpha}dV }\ge {\rm const}_{\alpha,\Omega}\,K_\Omega(w)\frac{\delta_\Omega(w)^\alpha}{|\log\delta_\Omega(w)|^{\frac{\alpha}a}}\ge \frac{{\rm const}_{\alpha,\Omega}}{\delta_\Omega(w)^{2-\alpha}|\log \delta_\Omega(w)|^{\frac{\alpha}a}}
$$
as $w\rightarrow \partial \Omega$ where the last inequality follows from the Ohsawa-Takegoshi extension theorem (cf. \cite{OhsawaTakegoshi87}). Applying Proposition 4.2 with $\eta(z)=\delta_\Omega(z)^{2-\alpha}|\log \delta_\Omega(z)|^{\frac{2\alpha}{a}}$, we conclude the proof. Q.E.D.

\section{Proof of Theorem~\ref{th:gehring}}

We follows closely along Stein's book \cite{Stein72}. For each $\zeta\in \partial \Omega$, let $\nu_\zeta$ denote the unit outward normal at $\zeta$ and $T_\zeta$ the tangent plane at $\zeta$. For each $t>0$, we define an approach region ${\mathcal A}_t(\zeta)$ with vertex $\zeta$ by
$$
{\mathcal A}_t(\zeta)=\left\{z\in \Omega:|(z-\zeta)\cdot\bar{\nu}_\zeta|<(1+t)\delta_\zeta(z),\,|z-\zeta|^2<t\delta_\zeta(z)\right\}
$$
where  $\delta_\zeta(z)=\min\{\delta_\Omega(z),d(z,T_\zeta)\}$. We shall say that $|f(z)|={\rm o}(\delta_\Omega(z)^{-\beta})$ uniformly as $z\rightarrow \zeta$ admissibly for some $\beta\ge 0$ if for each $t>0$
$$
\limsup \delta_\Omega(z)^\beta|f(z)|=0
$$
as $z\rightarrow \zeta$ from the inside of ${\mathcal A}_t(\zeta)$.
For each $\zeta_0\in \partial \Omega$ and $r>0$, we put
\begin{eqnarray*}
B_1(\zeta_0,r) & = & \left\{\zeta\in \partial\Omega:|\zeta-\zeta_0|<r\right\}\\
B_2(\zeta_0,r) & = & \left\{\zeta\in \partial \Omega:|(\zeta-\zeta_0)\cdot \bar{\nu}_{\zeta_0}|<r,\,|\zeta-\zeta_0|^2<r\right\}
\end{eqnarray*}
and
$$
f_j^\ast(\zeta_0)=\sup_{r>0}\frac1{\sigma(B_j(\zeta_0,r))}\int_{B_j(\zeta_0,r)}|f(\zeta)|d\sigma(\zeta),\ \ \ j=1,2
$$
where $f\in L^p(\partial \Omega)$ and $d\sigma$ is the surface measure for $\partial \Omega$. The maximal function is defined by
$$
(Mf)(\zeta)=(f_1^\ast)_2^\ast(\zeta).
$$

\begin{theorem}\label{th:stein}
(cf. \cite{Stein72}, see also \cite{Hormander67b}).
\begin{enumerate}
\item $\|Mf\|_p\le {\rm const}_p\,\|f\|_p$, $\forall\,f\in L^p(\partial \Omega),\ 1<p\le \infty$.

 \item  Let $u$ be a psh function on $\Omega$ which is continuous on $\overline{\Omega}$ and let $f=u|_{\partial \Omega}$. Then
$$
\sup_{z\in {\mathcal A}_t(\zeta)}|u(z)|\le {\rm const}_p\, (Mf)(\zeta).
$$
\end{enumerate}
\end{theorem}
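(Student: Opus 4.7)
The plan is to treat the two assertions separately, using classical harmonic-analysis machinery adapted to the anisotropic geometry of $\partial\Omega$.

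For (1), I would derive $\|Mf\|_p\le C_p\|f\|_p$ by iterating the Hardy-Littlewood-Wiener maximal theorem. The Euclidean caps $B_1(\zeta_0,r)$ make $(\partial\Omega,\sigma)$ a space of homogeneous type (a compact $C^2$ hypersurface has doubling surface measure), so Vitali covering together with the weak-type $(1,1)$ bound and Marcinkiewicz interpolation yield $\|f_1^\ast\|_p\le C_p\|f\|_p$ for $1<p\le\infty$. For the second step, the family $\{B_2(\zeta_0,r)\}$ is the Koranyi-type system associated with the quasi-metric $d_K(\zeta,\zeta_0):=|(\zeta-\zeta_0)\cdot\bar\nu_{\zeta_0}|+|\zeta-\zeta_0|^2$; the $C^2$-regularity of $\partial\Omega$ gives uniform comparability of $d_K$ across base-points and hence doubling of $\sigma$ (with $\sigma(B_2(\zeta_0,r))\asymp r^n$). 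The same covering-interpolation machinery then yields $\|g_2^\ast\|_p\le C_p\|g\|_p$, and composing gives (1).

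For (2), fix $\zeta\in\partial\Omega$ and $z\in\mathcal{A}_t(\zeta)$, and set $\delta=\delta_\Omega(z)$. Since $u$ is subharmonic and continuous up to $\partial\Omega$, the maximum principle gives $u(z)\le h(z)$, where $h$ is the Poisson extension of $|f|$ to $\Omega$; in the application of (2) to Theorem~\ref{th:gehring}, where $u=|g|^{2}$ for some holomorphic $g$, one has $u\ge 0$ and hence $|u(z)|\le h(z)$. It remains to estimate $h(z)$ pointwise by $(Mf)(\zeta)$. I would write $h(z)=\int_{\partial\Omega}P_\Omega(z,\eta)|f(\eta)|\,d\sigma(\eta)$, decompose $\partial\Omega$ into dyadic anisotropic annuli $A_k=B_2(\zeta,2^{k+1}\delta)\setminus B_2(\zeta,2^k\delta)$, and establish, by local flattening of $\partial\Omega$ via a $C^2$ diffeomorphism and comparison with the half-space Poisson kernel, the size estimate
$$
P_\Omega(z,\eta)\le \frac{C\cdot 2^{-k}}{\sigma(B_2(\zeta,2^k\delta))}\qquad\text{for }\eta\in A_k.
$$
Summing the resulting geometric series and recognizing the inner averages of $|f|$ over small $B_1$-caps produces $(f_1^\ast)_2^\ast(\zeta)=(Mf)(\zeta)$ as the final bound.

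The main obstacle is precisely this Poisson-kernel estimate on the anisotropic shells: for generic $C^2$ domains the kernel does not decouple neatly into normal and tangential factors, and one must absorb the curvature of $\partial\Omega$ into the constant $C$ via careful local straightening. Once that estimate is in hand, the dyadic summation is routine. A shortcut that avoids the Poisson kernel altogether is to iterate the sub-mean-value inequality on anisotropic balls centered at $z$ of radii $\sim\delta$ in the direction $\bar\nu_\zeta$ and $\sim\sqrt{\delta}$ in the complex-tangential directions: the inner averaging produces $f_1^\ast$ and the outer averaging produces $(f_1^\ast)_2^\ast$, provided one checks that the boundary trace of these balls lies uniformly in $B_2(\zeta,C\delta)$ as $z$ varies in $\mathcal{A}_t(\zeta)$.
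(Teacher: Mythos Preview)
The paper does not prove this theorem at all: it is simply quoted, with attribution, from Stein's monograph \cite{Stein72} (and H\"ormander \cite{Hormander67b}). There is therefore no ``paper's own proof'' to compare against; the result functions here purely as a black box feeding into the proof of Theorem~\ref{th:gehring}.

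Your sketch is a reasonable reconstruction of the classical argument. A couple of remarks. For part~(2), the statement as written in the paper has $|u(z)|$ on the left, but the maximum-principle step you invoke only controls $u(z)$ from above, not $-u(z)$; you correctly note that in the actual application $u=|f_\varepsilon|\ge 0$, so this is harmless, but strictly speaking the theorem as stated needs either $u\ge 0$ or a separate argument for the lower bound. More substantively, the Poisson-kernel decay you write down,
\[
P_\Omega(z,\eta)\le \frac{C\,2^{-k}}{\sigma(B_2(\zeta,2^k\delta))}\quad\text{for }\eta\in A_k,
\]
is not what one gets for a generic $C^2$ domain: the Poisson kernel of $\Omega$ is essentially isotropic (comparable to $\delta_\Omega(z)/|z-\eta|^{2n}$), and does not by itself see the complex-tangential/normal splitting encoded in $B_2$. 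Stein's actual route is closer to your ``shortcut'': one first controls $u(z)$ by an \emph{isotropic} Poisson or sub-mean-value average over a small Euclidean ball, producing $f_1^\ast$ evaluated at a nearby boundary point, and only then exploits the anisotropic shape of $\mathcal A_t(\zeta)$ to show that this nearby point ranges over a set contained in some $B_2(\zeta,Cr)$ --- which is exactly why $M$ is defined as the composite $(f_1^\ast)_2^\ast$ rather than a single anisotropic maximal function. Your alternative paragraph captures this correctly; I would lead with it rather than the direct Poisson-kernel estimate.
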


Now choose a cover of $\Omega$ by finitely many subdomains $\Omega_0,\Omega_1,\cdots,\Omega_m\subset \Omega$ with the following properties:

(a) $\partial \Omega_j$ is $C^2$.

(c) $\partial\Omega_j-(\partial \Omega_j\cap \partial \Omega)\subset \Omega$.

(b) There exists a domain $W_j\subset \partial \Omega_j\cap \partial \Omega$ such that $\{W_j\}_{j=0}^m$ forms a cover of $\partial \Omega$.

(d) There exists an outward unit normal $\nu_j$ at a point in $\partial\Omega_j\cap \partial \Omega$ such that
$$
\overline{\Omega}_j-\varepsilon v_j\subset \Omega,\ \ \ \ \ \forall\,0\le \varepsilon\ll 1.
$$
It suffices to work on a single subdomain, say $\Omega_0$.
Let $\varepsilon_0$ be a sufficiently small number. In order to apply Gehring's method (cf. \cite{Gehring57}), we define for each $t>0$, $0<\varepsilon<\varepsilon_0/2$, $\zeta\in W_0$,
\begin{eqnarray*}
U^{(t)}_\varepsilon(\zeta) & = & \left\{z\in {\mathcal A}_t(\zeta):2\varepsilon<\delta_\zeta(z)<\varepsilon_0\right\}\\
V^{(t)}_{\varepsilon}(\zeta) & = & \left\{z\in {\mathcal A}_t(\zeta)-\varepsilon\nu_0: \delta_\zeta(z)<\frac32 \varepsilon_0\right\}.
\end{eqnarray*}

\begin{lemma}\label{lem:1}
 For each $t>0$, we may choose $\varepsilon_0>0$ so that
$$
U^{(t)}_\varepsilon(\zeta)\subset V^{(s)}_{\varepsilon}(\zeta)\subset \Omega_0,\ \ \ s:=2+4t,
$$
 for all $\varepsilon< \varepsilon_0/2$ and $\zeta\in W_0$.
 \end{lemma}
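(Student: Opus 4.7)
The plan is to verify the two inclusions of Lemma 5.1 separately by a triangle-inequality computation followed by geometric bookkeeping; $\varepsilon_0$ will be fixed at the end, depending on $t$ and the geometry of $\Omega_0$.

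For the first inclusion $U^{(t)}_\varepsilon(\zeta)\subset V^{(s)}_\varepsilon(\zeta)$, I will take $z\in U^{(t)}_\varepsilon(\zeta)$, set $w=z+\varepsilon\nu_0$, and show $w\in {\mathcal A}_s(\zeta)$ (the side condition $\delta_\zeta(z)<(3/2)\varepsilon_0$ is automatic from $\delta_\zeta(z)<\varepsilon_0$). The crucial observation is that the defining constraint $\delta_\zeta(z)>2\varepsilon$ makes the $\varepsilon$-shift small relative to $\delta_\zeta$: applying the triangle inequality separately to $\delta_\Omega(\cdot)$ and $d(\cdot,T_\zeta)$ yields $\delta_\zeta(w)\ge \delta_\zeta(z)-\varepsilon\ge \frac{1}{2}\delta_\zeta(z)>\varepsilon$, hence $w\in\Omega$, $\delta_\zeta(z)\le 2\delta_\zeta(w)$, and $\varepsilon<\delta_\zeta(w)$. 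Plugging these into
$$|(w-\zeta)\cdot \bar\nu_\zeta|\le |(z-\zeta)\cdot \bar\nu_\zeta|+\varepsilon,\qquad |w-\zeta|^2\le 2|z-\zeta|^2+2\varepsilon^2$$
together with the membership $z\in {\mathcal A}_t(\zeta)$ will give $|(w-\zeta)\cdot \bar\nu_\zeta|<(3+2t)\delta_\zeta(w)\le (1+s)\delta_\zeta(w)$ and, once $\varepsilon_0\le 1$, $|w-\zeta|^2<(4t+2)\delta_\zeta(w)=s\delta_\zeta(w)$. This is precisely how the constant $s=2+4t$ arises: the shift by $\varepsilon\nu_0$ costs a factor of $2$ on top of $2t$.

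For the second inclusion $V^{(s)}_\varepsilon(\zeta)\subset\Omega_0$, I will write $z=w-\varepsilon\nu_0$ with $w\in{\mathcal A}_s(\zeta)$. The estimates $|w-\zeta|^2< s\delta_\zeta(w)\le s(\delta_\zeta(z)+\varepsilon)<2s\varepsilon_0$ and $|z-\zeta|<\sqrt{2s\varepsilon_0}+\varepsilon_0/2$ confine both $w$ and $z$ to a ball of radius $O(\sqrt{\varepsilon_0})$ around $\zeta\in W_0$. Now by property (c) the ``interior face'' $\partial\Omega_0-(\partial\Omega_0\cap\partial\Omega)$ is a compact subset of the open set $\Omega$, so it sits at a positive distance $d_0>0$ from the compact set $\overline{W_0}$. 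I then shrink $\varepsilon_0$ so that (i) the $O(\sqrt{\varepsilon_0})$-neighborhood of $\overline{W_0}$ lies within $d_0/2$ of $W_0$ and within a tubular neighborhood of $\partial\Omega$ on which the $C^2$ surfaces $\partial\Omega_0$ and $\partial\Omega$ coincide, and (ii) $\varepsilon_0/2$ lies in the admissible $\varepsilon$-range of property (d). In that neighborhood $\Omega_0$ and $\Omega$ are defined by the same $C^2$ inequality, so $w\in\Omega$ forces $w\in\Omega_0$; property (d) then gives $z=w-\varepsilon\nu_0\in\Omega$, and the local coincidence upgrades this to $z\in\Omega_0$.

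The main obstacle will be the coordination of these conditions on $\varepsilon_0$ in the second step, uniformly in $\zeta\in W_0$: the size of the tubular neighborhood on which $\partial\Omega_0=\partial\Omega$, the distance from $\overline{W_0}$ to the interior face of $\partial\Omega_0$, and the threshold in property (d) must all be controlled by a single $\varepsilon_0$. Compactness of $\overline{W_0}$ together with the $C^2$ regularity of $\partial\Omega_0$ guarantee that such an $\varepsilon_0$ (depending only on $t$ and the geometry of $\Omega_0$) exists, and the lemma follows.
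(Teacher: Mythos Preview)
Your proof is correct and follows essentially the same route as the paper. The first inclusion is handled by the identical triangle-inequality computation (the paper obtains $\delta_\zeta(z+\varepsilon\nu_0)>\varepsilon$, $\delta_\zeta(z)\le 2\delta_\zeta(z+\varepsilon\nu_0)$, and then the two bounds $(3+2t)\le 1+s$ and $2+4t=s$ exactly as you do); for the second inclusion the paper simply notes $|z-\zeta|^2<s\delta_\zeta(z)\le\tfrac32 s\varepsilon_0$ and declares ``hence $V^{(s)}_\varepsilon(\zeta)\subset\Omega_0$ \dots\ provided $\varepsilon_0$ small enough,'' whereas you spell out the compactness/tubular-neighborhood argument using properties~(c) and~(d) --- a welcome elaboration of what the paper leaves implicit.
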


\begin{proof}
  For each $z\in U^{(t)}_\varepsilon(\zeta)$, we have $\delta_\zeta(z)>2\varepsilon$. Thus
\begin{eqnarray*}
\delta_\zeta(z+\varepsilon\nu_0) & \ge & \delta_\zeta(z)-\varepsilon> \varepsilon\\
\delta_\zeta(z+\varepsilon \nu_0) & \le & \delta_\zeta(z)+\varepsilon<\frac32 \varepsilon_0
\end{eqnarray*}
for all $\varepsilon<\varepsilon_0/2$.
Since
$$
|(z-\zeta)\cdot \bar{\nu}_\zeta|<(1+t)\delta_\zeta(z),\ \ \ |z-\zeta|<(t\delta_\zeta(z))^{1/2},
$$
we get
\begin{eqnarray*}
|(z+\varepsilon\nu_0-\zeta)\cdot \bar{\nu}_\zeta| & \le & |(z-\zeta)\cdot \bar{\nu}_\zeta|+\varepsilon< (1+t)\delta_\zeta(z)+\varepsilon\le (3+2t)\delta_\zeta(z+\varepsilon\nu_0)\\
|z+\varepsilon\nu_0-\zeta|^2 & \le & 2|z-\zeta|^2+2\varepsilon^2<2t\delta_\zeta(z)+2\varepsilon\le (2+4t)\delta_\zeta(z+\varepsilon\nu_0).
\end{eqnarray*}
Thus $z+\varepsilon\nu_0\in V_\varepsilon^{(s)}(\zeta)$ where $s=2+4t$ and we get the first inclusion in the lemma.

 On the other hand, for each $z\in V_\varepsilon^{(s)}(\zeta)$, we have $|z-\zeta|^2<s\delta_\zeta(z)\le \frac32 s\varepsilon_0$, hence $V^{(s)}_\varepsilon(\zeta)\subset \Omega_0$ for all $\varepsilon< \varepsilon_0/2$, provided $\varepsilon_0$ small enough. Q.E.D.

\medskip

For each $f\in A^2_\alpha(\Omega)$, we define
$$
u^{(t)}_\varepsilon(\zeta)=\sup_{z\in U^{(t)}_\varepsilon(\zeta)}|f(z)|\ \ \ \ \ {\rm and}\ \ \ \ \  v^{(s)}_\varepsilon(\zeta)=\sup_{z\in V^{(s)}_\varepsilon(\zeta)} |f(z)|.
$$
Put $f_\varepsilon(z)=f(z-\varepsilon \nu_0)$, $z\in \Omega_0$. Clearly, $|f_\varepsilon|$ is psh in $\Omega_0$ and continuous on $\overline{\Omega}_0$. Let $M_0f_\varepsilon$ be the corresponding maximal function on $\partial\Omega_0$. Take $0<c<1$ so that
 $$
 \Omega_0-\varepsilon \nu_0=:\Omega_0^\varepsilon\subset \Omega_{c\varepsilon}:=\left\{z\in \Omega:\delta_\Omega(z)>c\varepsilon\right\}.
 $$
Let $d\sigma_0$ and $d\sigma_{c\varepsilon}$ denote the surface measures on $\partial \Omega_0$ and $\partial \Omega_{c\varepsilon}$ respectively and let $C$ denote a generic constant which is independent of $\varepsilon$ but probably depends on $\alpha,t,s$. By Theorem 5.1 and Lemma 5.2,  we have
$$
u_\varepsilon^{(t)}(\zeta)\le v_\varepsilon^{(s)}(\zeta)\le C(M_0 f_\varepsilon)(\zeta),\ \ \ \forall\,\zeta\in W_0,
$$
so that
\begin{eqnarray*}
\int_{W_0} |u_\varepsilon^{(t)}(\zeta)|^2d\sigma_0(\zeta) & \le  & C\int_{\partial \Omega_0}|M_0 f_\varepsilon|^2 d\sigma_0
 \le  C\int_{\partial \Omega_0} |f_\varepsilon|^2d\sigma_0\\
 & = & C\int_{\partial \Omega^\varepsilon_0}|f|^2 d\sigma_{0}\le C\int_{\partial \Omega_{c\varepsilon}}|f|^2d\sigma_{c\varepsilon}
\end{eqnarray*}
because of the following

\begin{lemma}\label{lem:2}
There is a constant $C>0$ independent of $\varepsilon$ and $f$ such that
$$
\int_{\partial \Omega^\varepsilon_0}|f|^2d\sigma_0\le C \int_{\partial \Omega_{c\varepsilon}}|f|^2d\sigma_{c\varepsilon}
$$
for all sufficiently small $\varepsilon>0$.
\end{lemma}

Thus for suitable small number $c_0>0$ we have
$$
\int_{0}^{c_0}\varepsilon^{-\alpha}\int_{W_0} |u_\varepsilon^{(t)}(\zeta)|^2d\sigma_0(\zeta) d\varepsilon\le C\int_{0}^{c_0}\int_{\partial \Omega_{c\varepsilon}}|f|^2\varepsilon^{-\alpha} d\sigma_{c\varepsilon} d\varepsilon\le C
\int_{\Omega}|f|^2\delta_\Omega^{-\alpha}dV<\infty,
$$
so that for $\sigma_0-$almost every $\zeta\in W_0$,
$$
\int_{0}^{c_0}\varepsilon^{-\alpha} |u_\varepsilon^{(t)}(\zeta)|^2 d\varepsilon<\infty.
$$
Hence
$$
\int_{0}^{\varepsilon'}\varepsilon^{-\alpha} |u_\varepsilon^{(t)}(\zeta)|^2 d\varepsilon={\rm o}(1)
$$
as $\varepsilon'\rightarrow 0$. Given $z\in {\mathcal A}_t(\zeta)$, we let $\varepsilon'=\delta_\zeta(z)/2$. Since $z\in U_\varepsilon^{(t)}(\zeta)$ for each $\varepsilon<\varepsilon'$, we have $u_\varepsilon^{(t)}(\zeta)\ge |f(z)|$, thus
$$
|f(z)|={\rm o}(\delta_\zeta(z)^{-\frac{1-\alpha}2})\ \ \ \ \ {\rm uniformly}
$$
as $z\rightarrow \zeta$ from the inside of ${\mathcal A}_t(\zeta)$.  Q.E.D.
\end{proof}

\medskip

 Finally we prove Lemma~\ref{lem:2}. The argument is essentially implicit in \cite{ChenFu11}. Let $P(z,w)$, $P_\varepsilon(z,w)$, $P_0(z,w)$ and $P_{0,\varepsilon}(z,w)$ denote the Poisson kernels of $\Omega$, $\Omega_{c\varepsilon}$, $\Omega_0$ and $\Omega^\varepsilon_0$ respectively. Put
$$
g(z)=\int_{\partial \Omega_{c\varepsilon}}P_\varepsilon(z,w)|f(w)|^2d\sigma_\varepsilon(w).
$$
Then $g$ is a harmonic majorant of $|f|^2$ on $\Omega_{c\varepsilon}$. Fix a point $z_0$ in $\Omega_0$. Since $P_\varepsilon(z_0,\pi_\varepsilon^{-1}(\zeta))$ converges uniformly on $\partial \Omega$ to $P(z_0,\zeta)$ where $\pi_\varepsilon$ is the normal projection from $\partial\Omega_{c\varepsilon}$ to $\partial \Omega$,
  $$
  g(z_0)\le 2C_1 \int_{\partial \Omega_{c\varepsilon}}|f(w)|^2d\sigma_\varepsilon(w)
  $$
  for all sufficiently small $\varepsilon>0$ where $C_1=\sup_{\zeta\in \partial \Omega} P(z_0,\zeta)$. On the other hand,
\begin{eqnarray*}
g(z_0) & = & \int_{\partial \Omega^\varepsilon_0} P_{0,\varepsilon}(z_0,w)g(w)d\sigma_{0}\\
& \ge & \frac{C_2}2 \int_{\partial \Omega^\varepsilon_0} g(w)d\sigma_{0}\ge \frac{C_2}2 \int_{\partial \Omega^\varepsilon_0} |f(w)|^2d\sigma_{0}
\end{eqnarray*}
for all sufficiently small $\varepsilon>0$ where $C_2=\inf_{\zeta\in \partial \Omega_0}P_0(z_0,\zeta)$. The proof is complete. Q.E.D.

\begin{remark}
 In various studies of boundary behavior of functions in Hardy spaces, the approach region defined as above is only best possible for strongly pseudoconvex domains (see e.g., \cite{Nagel-Stein-Wainger81,Krantz92}). It is probably same in the case of weighted Bergman spaces.
 \end{remark}

\section{Proof of Theorem~\ref{th:bergmanszego}}
Let $\|\cdot\|_\alpha$ and $\|\cdot\|_{\partial \Omega}$ denote the corresponding norms of the weighted Bergman space $A^2_\alpha(\Omega)$ and the Hardy space $H^2(\Omega)$ respectively. Note first that for each $f\in H^2(\Omega)$,
 and any sufficiently small $\varepsilon_0>0$,
\begin{eqnarray*}
(1-\alpha)\int_\Omega |f|^2\delta_\Omega^{-\alpha}dV & = & (1-\alpha) \int_{\Omega_{\varepsilon_0}} |f|^2\delta_\Omega^{-\alpha}dV+(1-\alpha) \int_{\Omega\backslash \Omega_{\varepsilon_0}} |f|^2\delta_\Omega^{-\alpha}dV\\
& \le & (1-\alpha) \int_{\Omega_{\varepsilon_0}} |f|^2\delta_\Omega^{-\alpha}dV+\varepsilon_0^{1-\alpha}\sup_{0<\varepsilon<\varepsilon_0}\|f\|^2_{\partial \Omega_\varepsilon}.
\end{eqnarray*}
Applying this inequality with $f(z)=S(z,w)$ for fixed $w\in \Omega$, we get
$$
\liminf_{\alpha\rightarrow 1-}\,(1-\alpha)^{-1}K_\alpha(w)\ge \liminf_{\alpha\rightarrow 1-}\,(1-\alpha)^{-1} \frac{|f(w)|^2}{\|f\|^2_{\alpha}}=\frac{S(w)^2}{\sup_{0<\varepsilon<\varepsilon_0}\|S(\cdot,w)\|^2_{\partial \Omega_\varepsilon}}
$$
locally uniformly in $w$ and uniformly in $\varepsilon_0$. Let $S_\varepsilon$ denote the Szeg\"o kernel of $\Omega_\varepsilon$. It was proved by Boas \cite{Boas87} that $S_\varepsilon(z,w)\rightarrow S(z,w)$ locally uniformly in $z,w$ and
$$
\|S_\varepsilon(\cdot,w)-S(\cdot,w)\|_{\partial \Omega_\varepsilon}\rightarrow 0
$$
locally uniformly in $w$ as $\varepsilon\rightarrow 0+$. Thus
$$
\liminf_{\alpha\rightarrow 1-}\,(1-\alpha)^{-1}K_\alpha(w)\ge \lim_{\varepsilon_0\rightarrow 0+}\frac{S(w)^2}{\sup_{0<\varepsilon<\varepsilon_0}\|S_\varepsilon(\cdot,w)\|^2_{\partial \Omega_\varepsilon}}
=\lim_{\varepsilon_0\rightarrow 0+}\frac{S(w)^2}{\sup_{0<\varepsilon<\varepsilon_0}S_\varepsilon(w)}=S(w)
$$
locally uniformly in $w$. On the other side, for any sufficiently small $\varepsilon>0$
\begin{eqnarray*}
 && \int_{\partial\Omega_\varepsilon}\left|(1-\alpha)^{-1}K_\alpha(z,w)-S_\varepsilon(z,w)\right|^2d\sigma_\varepsilon(z)\\
 & = & (1-\alpha)^{-2}\|K_\alpha(\cdot,w)\|^2_{\partial \Omega_\varepsilon}+\|S_\varepsilon(\cdot,w)\|^2_{\partial \Omega_\varepsilon}-2(1-\alpha)^{-1}{\rm Re\,}\int_{\partial\Omega_\varepsilon}K_\alpha(z,w)\overline{S_\varepsilon(z,w)}d\sigma_\varepsilon(z)\\
 & = & (1-\alpha)^{-2}\|K_\alpha(\cdot,w)\|^2_{\partial \Omega_\varepsilon}+S_\varepsilon(w)-2(1-\alpha)^{-1}K_\alpha(w).
\end{eqnarray*}
Put $f_\alpha(z):=(1-\alpha)^{-1/2}K_\alpha(z,w)/\sqrt{K_\alpha(w)}$.
Following \cite{ChenFu11}, we introduce
$$
\lambda_\alpha(\varepsilon):=\|f_\alpha\|^2_{\partial \Omega_\varepsilon}=\int_{\partial \Omega_\varepsilon}|f_\alpha|^2 d\sigma_\varepsilon.
$$
Clearly, $\lambda_\alpha$ is continuous on $(0,a]$ for some sufficiently small $a>0$ (independent of $\alpha$). For any sufficiently small $0<\varepsilon_1<\varepsilon_2<a$, $\lambda_\alpha$ assumes the minimum at some point $\varepsilon^\ast=\varepsilon^\ast(\varepsilon_1,\varepsilon_2,\alpha)$ in $[\varepsilon_1,\varepsilon_2]$. Thus
$$
1=(1-\alpha)\|f_\alpha\|^2_\alpha\ge (1-\alpha)\int_{\varepsilon_1\le \delta_\Omega \le \varepsilon_2} |f_\alpha|^2 \delta_\Omega^{-\alpha}dV\ge \left(\varepsilon_2^{1-\alpha}-\varepsilon_1^{1-\alpha}\right)\lambda_\alpha(\varepsilon^\ast),
$$
so that
$$
\|K_\alpha(\cdot,w)\|^2_{\partial \Omega_{\varepsilon^\ast}}\le (1-\alpha)\left(\varepsilon_2^{1-\alpha}-\varepsilon_1^{1-\alpha}\right)^{-1}K_\alpha(w).
$$
Thus
\begin{eqnarray*}
&& \int_{\partial\Omega_{\varepsilon^\ast}}\left|(1-\alpha)^{-1}K_\alpha(z,w)-S_{\varepsilon^\ast}(z,w)\right|^2d\sigma_{\varepsilon^\ast}(z)\\
& \le & S_{\varepsilon^\ast}(w)-(1-\alpha)^{-1}\left(2-\left(\varepsilon_2^{1-\alpha}-\varepsilon_1^{1-\alpha}\right)^{-1}\right)K_\alpha(w)\\
& = &\left(2-\left(\varepsilon_2^{1-\alpha}-\varepsilon_1^{1-\alpha}\right)^{-1}\right)\left(S(w)-(1-\alpha)^{-1}K_\alpha(w)\right)\\
&& + \left(\left(\varepsilon_2^{1-\alpha}-\varepsilon_1^{1-\alpha}\right)^{-1}-1\right) S(w)+S_{\varepsilon^\ast}(w)-S(w).
\end{eqnarray*}
It follow that
$$
\limsup_{\varepsilon_2\rightarrow 0+}\,\limsup_{\alpha\rightarrow 1-}\,\limsup_{\varepsilon_1\rightarrow 0+}\,\int_{\partial\Omega_{\varepsilon^\ast}}\left|(1-\alpha)^{-1}K_\alpha(z,w)-S_{\varepsilon^\ast}(z,w)\right|^2d\sigma_{\varepsilon^\ast}(z)= 0
$$
locally uniformly in $w$. Let $P_\varepsilon(z,\zeta)$ denote the Poisson kernel of $\Omega_\varepsilon$. For each compact set $M$ in $\Omega$ and $z,w\in M$, we have
\begin{eqnarray*}
&& \left|(1-\alpha)^{-1}K_\alpha(z,w)-S_{\varepsilon^\ast}(z,w)\right|^2\\
 & \le &  \int_{\partial\Omega_{\varepsilon^\ast}}P_{\varepsilon^\ast}(z,\zeta)\left|(1-\alpha)^{-1}K_\alpha(\zeta,w)-S_{\varepsilon^\ast}(\zeta,w)\right|^2d\sigma_{\varepsilon^\ast}(\zeta)\\
& \le & {\rm const}_M \int_{\partial\Omega_{\varepsilon^\ast}}\left|(1-\alpha)^{-1}K_\alpha(\zeta,w)-S_{\varepsilon^\ast}(\zeta,w)\right|^2d\sigma_{\varepsilon^\ast}(\zeta)
\end{eqnarray*}
provided $\varepsilon^\ast$ sufficiently small. Thus
 $(1-\alpha)^{-1}K_\alpha(z,w)\rightarrow S(z,w)$ uniformly in $z,w\in M$ as $\alpha\rightarrow 1-$. The second assertion follows immediately from this fact and Theorem~\ref{th:vanish}. Q.E.D.

\begin{problem}
 Does $(1-\alpha)^{-1}K_\alpha(z,w)$ admit an asymptotic expansion in powers of $1-\alpha$ as $\alpha\rightarrow 1-$?
\end{problem}

\section{Proof of Theorem~\ref{th:ball} }
Let $ds^2_{{\mathbb B}^n}=\partial\bar{\partial}(-\log(1-|z|^2))$ be the Bergman metric of ${\mathbb B}^n$ and $d(z,w)$ the Bergman distance between two points $z,w$. Here we omit the factor $n+1$ in the classical definition of the Bergman metric for the sake of convenience. For each $w\in {\mathbb B}^n$, $\tau>0$ and $0<r<1$, we put
$$
{B}_\tau(w)=\left\{z\in {\mathbb B}^n: d(z,w)<\tau\right\},\ \ \ \ \ {\mathbb B}_r(w)=\left\{z\in {\mathbb B}^n:|z-w|<r\right\}.
$$
Note that
$$
B_\tau(0)={\mathbb B}_r(0)\iff \tau=\frac12 \log \frac{1+r}{1-r}.
$$
Let ${\rm vol}_B$ and ${\rm vol}_E$ denote the Bergman and Euclidean volumes respectively.

\begin{proposition}\label{prop:elementary}
The following conclusions hold:
\begin{enumerate}
\item
 For each $\tau>0$, there is a constant $C_\tau>1$ such that for each $w\in {\mathbb B}^n$,
$$
{B}_\tau(w)\subset \left\{z\in {\mathbb B}^n:C_\tau^{-1}(1-|w|)<1-|z|<C_\tau(1-|w|)\right\}.
$$
$$
C_\tau^{-1}(1-|w|)^{n+1}\le {\rm vol}_E\left({B}_\tau(w)\right)\le C_\tau (1-|w|)^{n+1}.
$$

\item
 For each $r<1$,
$$
{\rm vol }_B\left({\mathbb B}_r(0)\right)\le {\rm const}_n(1-r)^{-n}.
$$

\item
For each $\tau>0$, there is a constant $t>1$ such that for each $\zeta\in {\mathbb S}^n$ and each $w\in L_\zeta$, where $L_\zeta$ is the segment determined by $0,\zeta$, we have
 $$
 {B}_\tau(w)\subset  {\mathcal A}_{t}(\zeta).
 $$
 \end{enumerate}
 \end{proposition}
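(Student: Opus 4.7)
The plan is to reduce all three assertions to explicit computations on Euclidean balls centered at the origin, leveraging the automorphism invariance of the Bergman metric under the M\"obius involutions $\varphi_w\in\operatorname{Aut}(\mathbb{B}^n)$ satisfying $\varphi_w(0)=w$ and $\varphi_w(w)=0$. Throughout, I will use the standard identities
\[
1-|\varphi_w(z)|^2=\frac{(1-|w|^2)(1-|z|^2)}{|1-\langle z,w\rangle|^2},\qquad 1-\langle\varphi_w(z),w\rangle=\frac{1-|w|^2}{1-\langle z,w\rangle},
\]
together with the real Jacobian $J\varphi_w(z)=\bigl((1-|w|^2)/|1-\langle z,w\rangle|^2\bigr)^{n+1}$.

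For part (1), since the Bergman metric is radially symmetric at the origin, the Bergman ball $B_\tau(0)$ coincides with a Euclidean ball $\mathbb{B}_{r_\tau}(0)$ for some $r_\tau<1$ depending only on $\tau$, and the Bergman isometry $\varphi_w$ gives $B_\tau(w)=\varphi_w(\mathbb{B}_{r_\tau}(0))$. Writing $z=\varphi_w(z')$ with $|z'|\le r_\tau$ and combining the first identity above with the trivial bounds $1-r_\tau\le|1-\langle z',w\rangle|\le 1+r_\tau$ and $1-r_\tau^2\le 1-|z'|^2\le 1$ yields $1-|z|^2\asymp 1-|w|^2$, hence $1-|z|\asymp 1-|w|$, with constants depending only on $\tau$. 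The Euclidean volume estimate then follows by the change of variables $z=\varphi_w(z')$, since $J\varphi_w$ is comparable to $(1-|w|^2)^{n+1}$ uniformly on $\mathbb{B}_{r_\tau}(0)$.

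Part (2) is a direct computation: the Bergman volume form on $\mathbb{B}^n$ is a constant multiple of $(1-|z|^2)^{-(n+1)}\,dV$, so passing to spherical coordinates,
\[
\operatorname{vol}_B(\mathbb{B}_r(0))=\mathrm{const}_n\int_0^r\frac{\rho^{2n-1}}{(1-\rho^2)^{n+1}}\,d\rho\le\mathrm{const}_n\int_0^r\frac{d\rho}{(1-\rho)^{n+1}}\le\mathrm{const}_n(1-r)^{-n}.
\]

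For part (3), I would split according to the size of $|w|$. When $|w|\le 1/2$, the Bergman distance from $0$ to $w$ is bounded by a constant depending only on $\tau$, so $B_\tau(w)\subset\mathbb{B}_R(0)$ for some $R=R(\tau)<1$; on this set $1-|z|\ge 1-R$ while $|1-\langle z,\zeta\rangle|\le 2$, so the Koranyi inclusion holds for any $t\ge 2/(1-R)$. When $|w|\ge 1/2$, write $\zeta=w/|w|$ and apply the second key identity with $z=\varphi_w(z')$, $|z'|\le r_\tau$, to obtain $|1-\langle z,w\rangle|\le C_\tau(1-|w|)$. Then
\[
1-\langle z,\zeta\rangle=|w|^{-1}\bigl[(|w|-1)+(1-\langle z,w\rangle)\bigr],
\]
so $|1-\langle z,\zeta\rangle|\le 2[(1-|w|)+C_\tau(1-|w|)]$, which by part (1) is bounded by a constant times $1-|z|$. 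Taking $t$ to be the maximum of the two constants produced in the two cases completes the proof. The main bookkeeping obstacle is ensuring the estimates are uniform along $L_\zeta$, and in particular handling the degeneracy of $|w|^{-1}$ at the origin; this is precisely what forces the case split.
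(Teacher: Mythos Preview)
Your argument is correct and, for parts (1) and (2), essentially identical to the paper's: the paper simply cites Zhu's book for (1) (the very identities you wrote out are the content of those lemmas) and performs the same volume integral for (2).

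For part (3) your route diverges slightly from the paper's and is a bit more laborious than necessary. The paper also starts from the estimate $|1-\langle z,w\rangle|\le C_\tau(1-|w|)$ for $z\in B_\tau(w)$, but then uses the direct triangle inequality
\[
|1-\langle z,\zeta\rangle|\le |1-\langle z,w\rangle|+|\langle z,w-\zeta\rangle|\le C_\tau(1-|w|)+|z|\,(1-|w|)\le (C_\tau+1)(1-|w|),
\]
since $w=|w|\zeta$ gives $|w-\zeta|=1-|w|$. Combined with part (1), this yields the Koranyi inclusion \emph{uniformly for all} $w\in L_\zeta$, with no case split. Your decomposition $1-\langle z,\zeta\rangle=|w|^{-1}\bigl[(|w|-1)+(1-\langle z,w\rangle)\bigr]$ introduces the factor $|w|^{-1}$, which blows up near the origin and forces you to treat small $|w|$ separately; the paper's additive splitting avoids this entirely. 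Both arguments are short and elementary, but the paper's buys a one-line uniform estimate where yours spends a paragraph on the dichotomy.
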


\begin{proof} $(1)$ See \cite{Zhu05}, Lemma 2.20, Lemma 1.23.

$(2)$ The Bergman volume form is
$$
{\rm const}_n(1-|z|^2)^{-n-1}dV.
$$
Thus
$$
{\rm vol}_B\left({\mathbb B}_r(0)\right)={\rm const}_n\int_0^r(1-s^2)^{-n-1}s^{2n-1}ds,
$$
from which the assertion immediately follows.

$(3)$ By \cite{Zhu05}, Lemma 2.20, there is a constant $C_\tau>0$ such that
$$
|1-z\cdot\bar{w}|<C_\tau (1-|w|),\ \ \ \ \ \forall\,z\in {B}_\tau(w).
$$
Thus
$$
|1-z\cdot\bar{\zeta}|\le |1-z\cdot\bar{w}|+\left|z\cdot\overline{(w-\zeta)}\right|\le (C_\tau+1)(1-|w|)\le t(1-|z|)
$$
for suitable $t\gg 1$ by $(i)$.  Q.E.D.
\end{proof}

\begin{definition} 
 
 (see e.g., \cite{Kanai84}). 
 A subset $\Gamma=\{w_j\}_{j=1}^\infty$ of ${\mathbb B}^n$ is said to be $\tau$-{\it separated} for $\tau>0$, if $d(w_j,w_k)\ge \tau$ for all $j\neq k$, and a $\tau-$separated subset is called maximal if no more points can be added to $\Gamma$ without breaking the condition.
\end{definition}

A  basic observation is the following

\begin{lemma}\label{lem:ball1}
 Let $\Gamma=\{w_j\}_{j=1}^\infty$ be a $\tau-$separated sequence such that $0\notin \Gamma$. For any $\varepsilon>0$, 
$$
\sum_{j=1}^\infty\frac{(1-|w_j|)^n}{\left(\log \frac1{1-|w_j|}\right)^{1+\varepsilon}} <\infty.
$$
\end{lemma}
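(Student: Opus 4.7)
The plan is to exploit the $\tau$-separation via the standard volume-packing trick: the Bergman balls $B_{\tau/2}(w_j)$ are pairwise disjoint, so the sum in question will be dominated by an integral over $\mathbb{B}^n$ that turns out to be convergent.

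First, since $\Gamma$ is $\tau$-separated, the balls $B_{\tau/2}(w_j)$ are pairwise disjoint. By Proposition~\ref{prop:elementary}(1), on each such ball we have $1-|z|\asymp 1-|w_j|$ (with constants depending only on $\tau$), and the Euclidean volume satisfies ${\rm vol}_E(B_{\tau/2}(w_j))\asymp (1-|w_j|)^{n+1}$. Consequently
$$
\frac{(1-|w_j|)^n}{\left(\log \frac{1}{1-|w_j|}\right)^{1+\varepsilon}}\asymp \int_{B_{\tau/2}(w_j)}\frac{dV(z)}{(1-|z|)\left(\log \frac{1}{1-|z|}\right)^{1+\varepsilon}},
$$
provided $1-|w_j|$ is bounded away from $1$; the condition $0\notin \Gamma$, together with the fact that separation forces only finitely many $w_j$ with $|w_j|\le 1/2$, means the finitely many exceptional terms are harmless and may be absorbed at the end.

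Summing over $j$ and using disjointness gives
$$
\sum_{j=1}^\infty \frac{(1-|w_j|)^n}{\left(\log \frac{1}{1-|w_j|}\right)^{1+\varepsilon}}\lesssim \int_{\mathbb{B}^n}\frac{dV(z)}{(1-|z|)\left(\log \frac{1}{1-|z|}\right)^{1+\varepsilon}}+O(1).
$$
Passing to polar coordinates, the right-hand integral equals a constant times $\int_0^1 \frac{r^{2n-1}\,dr}{(1-r)\left(\log \frac{1}{1-r}\right)^{1+\varepsilon}}$. The substitution $u=\log\frac{1}{1-r}$ (so $du=\frac{dr}{1-r}$) reduces the singular tail near $r=1$ to $\int_C^\infty u^{-(1+\varepsilon)}\,du$, which converges since $\varepsilon>0$. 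Near $r=0$ the integrand is bounded, so the whole integral is finite.

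There is no serious obstacle here; the only mild point requiring care is bookkeeping of the comparability constants on the Bergman balls (supplied directly by Proposition~\ref{prop:elementary}(1)) and handling of points $w_j$ near the origin, which is dispatched by the $\tau$-separation bound on their number.
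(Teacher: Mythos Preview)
Your argument is correct in spirit and takes a somewhat different route from the paper's. The paper introduces the counting function $n_r=\#\{j:|w_j|<r\}$, bounds it by $n_r\le {\rm const}_{n,\tau}(1-r)^{-n}$ via the same disjoint-balls packing (using Proposition~\ref{prop:elementary}(2)), and then evaluates the sum through a Stieltjes integration by parts against $dn_s$. Your direct comparison of the sum to a volume integral is more streamlined and bypasses the Abel-summation step; both approaches rest on the identical packing idea, so neither is deeper, just a different bookkeeping device.

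One slip does need correcting. Your claim that the integrand $\dfrac{r^{2n-1}}{(1-r)\bigl(\log\frac{1}{1-r}\bigr)^{1+\varepsilon}}$ is bounded near $r=0$ is false: since $\log\frac{1}{1-r}\sim r$ there, the integrand behaves like $r^{2n-2-\varepsilon}$, and the integral over $(0,1/2)$ actually diverges whenever $\varepsilon\ge 2n-1$ (so for every $\varepsilon\ge 1$ when $n=1$). The fix is already implicit in your own setup: after discarding the finitely many $w_j$ with $|w_j|\le r_0$, choose $r_0$ close enough to $1$ (namely $r_0>1-\frac{1}{2C_{\tau/2}}$) so that Proposition~\ref{prop:elementary}(1) forces $B_{\tau/2}(w_j)\subset\{|z|>1/2\}$ for the remaining $j$. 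Then the comparison integral is only over $\{|z|>1/2\}$, i.e.\ $\int_{1/2}^1$, and your substitution $u=\log\frac{1}{1-r}$ finishes the job without any issue at $r=0$.
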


\begin{proof} The argument is standard (compare \cite{Tsuji59}, Theorem XI.\,7 and Theorem XI.\,8). For each $0<r<1$, let $n_r$ denote the number of points $w_j$ which are contained in the ball ${\mathbb B}_r(0)=B_{\frac12\log\frac{1+r}{1-r}}(0)$. Since $\{B_{\tau/2}(w_j)\}_{j=1}^\infty$ do not overlap, we have
$$
n_r {\rm vol}_B \left(B_{\tau/2}(0)\right)\le {\rm vol}_B\left( B_{\frac12\log\frac{1+r}{1-r}+\frac{\tau}2}(0)\right)={\rm vol}_B\left({\mathbb B}_{\frac{e^{\tau}(1+r)-(1-r)}{e^{\tau}(1+r)+(1-r)}}(0)\right)\le {\rm const}_{n,\tau}(1-r)^{-n}
$$
by Proposition $7.1/(2)$. Take $r_0>0$ such that $|w_j|\ge r_0$ for each $j$. Thus
\begin{eqnarray*}
&& \sum_{|w_j|<r<1}\frac{(1-|w_j|)^n}{\left(\log \frac1{1-|w_j|}\right)^{1+\varepsilon}}   =  \int_{r_0}^r \frac{(1-s)^n}{\left(\log\frac1{1-s}\right)^{1+\varepsilon}}dn_s\\
& \le & \frac{(1-r)^n}{\left(\log\frac1{1-r}\right)^{1+\varepsilon}}n_r+\int_{r_0}^r \left(\frac{(1-s)^n}{\left(\log\frac1{1-s}\right)^{1+\varepsilon}}\right)' n_s ds\\
& \le & \frac{{\rm const}_{n,\tau}}{\left(\log\frac1{1-r}\right)^{1+\varepsilon}}+{\rm const}_{n,\tau,\varepsilon}\int_{r_0}^r\frac{1}{(1-s)\left(\log\frac1{1-s}\right)^{1+\varepsilon}}ds={\rm O}(1)
\end{eqnarray*}
as $r\rightarrow 1-$. Q.E.D.
\end{proof}

\begin{lemma}\label{lem:ball2}
There is a constant $C_n>0$ such that for each $\alpha<1$, $\varepsilon>0$ and each $2\tau-$separated sequence $\Gamma=\{w_j\}_{j=1}^\infty$ with $0\notin \Gamma$ and $\tau\ge \frac{C_n}{\sqrt{1-\alpha}}$, there exists a function $f\in A^2_\alpha({\mathbb B}^n)$ such that
$$
f(w_j)=(1-|w_j|)^{-\frac{1-\alpha}2}\left(\log \frac1{1-|w_j|}\right)^{-\frac{1+\varepsilon}2},\ \ \ \ \ \forall\,j.
$$
\end{lemma}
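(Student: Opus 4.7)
The plan is to realize $f$ by the familiar cut-off plus $\bar\partial$-correction scheme: first construct a smooth quasi-interpolant $F$ with $F(w_j)=a_j$ using bump functions around each $w_j$, then correct by a solution $u$ of $\bar\partial u=\bar\partial F$ that vanishes at every $w_j$. Set $a_j:=(1-|w_j|)^{-(1-\alpha)/2}(\log\tfrac{1}{1-|w_j|})^{-(1+\varepsilon)/2}$, pick a cutoff $\chi_0\in C^\infty([0,\infty);[0,1])$ with $\chi_0\equiv 1$ on $[0,\tau/4]$ and $\chi_0\equiv 0$ on $[\tau/2,\infty)$, put $\chi_j(z):=\chi_0(d(z,w_j))$, and define
$$F(z):=\sum_j a_j\chi_j(z),\qquad v:=\bar\partial F=\sum_j a_j\bar\partial\chi_j.$$
The $2\tau$-separation of $\Gamma$ makes the $\chi_j$ have pairwise disjoint supports, so $F(w_j)=a_j$ and $v$ is supported in the disjoint Bergman annuli $\Sigma_j=\{z:\tau/4\le d(z,w_j)\le\tau/2\}$.

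I would next apply Theorem~\ref{th:main2} to $v$ with a psh weight $\varphi$ carrying a logarithmic pole of strength $2n$ at every $w_j$, so that $e^{-\varphi(z)}\asymp(1-|w_j|)^{4n}|z-w_j|^{-4n}$ near $w_j$ automatically forces any $L^2(e^{-\varphi})$-solution to vanish there to order strictly greater than $n$. The natural candidate is $\varphi:=2n\sum_j G_j$ with $G_j(z):=\log|\phi_{w_j}(z)|^2$, where $\phi_{w_j}$ is the M\"obius involution of $\mathbb{B}^n$ exchanging $0$ and $w_j$: each $G_j$ is non-positive and psh on $\mathbb{B}^n$, satisfies $G_j(z)=2\log|z-w_j|-2\log(1-|w_j|^2)+O(1)$ near $w_j$, and its complex Hessian is the pullback of the Fubini-Study form and hence comparable to the Bergman metric $g_B$ near the pole. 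Since the direct sum need not converge, the summands must be replaced by a regularized truncation supported in $B_\tau(w_j)$ that preserves the pole at $w_j$; this is the technical core of the construction. On $\Sigma_j$ the resulting weight satisfies $e^{-\varphi}\le C_{n,\tau}$ (bounded independently of $j$), $i\partial\bar\partial\varphi\gtrsim g_B$, while Bergman-Lipschitz control of $\chi_j$ gives $|\bar\partial\chi_j|^2_{i\partial\bar\partial\varphi}\le C/\tau^2$.

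Theorem~\ref{th:main2}, tracking the Donnelly-Fefferman factor $(1+\alpha)/(1-\alpha)$ visible in (2.2), then produces $u$ with
$$\int_{\mathbb{B}^n}|u|^2e^{-\varphi}\delta_\Omega^{-\alpha}\,dV\;\le\;\frac{C_n}{(1-\alpha)\tau^2}\sum_j a_j^2\int_{\Sigma_j}\delta_\Omega^{-\alpha}\,dV.$$
By Proposition~\ref{prop:elementary}(1) we have $\int_{\Sigma_j}\delta_\Omega^{-\alpha}\,dV\asymp(1-|w_j|)^{n+1-\alpha}$; direct substitution yields the key algebraic identity $a_j^2(1-|w_j|)^{n+1-\alpha}=(1-|w_j|)^n(\log\tfrac{1}{1-|w_j|})^{-(1+\varepsilon)}$, so the sum is finite by Lemma~\ref{lem:ball1}. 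The hypothesis $\tau\ge C_n/\sqrt{1-\alpha}$ is exactly what lets the $1/\tau^2$ gain from the Bergman-Lipschitz cutoffs absorb the $1/(1-\alpha)$ loss of Theorem~\ref{th:main2} and ensures that the truncation of $\varphi$ remains globally psh in the sense required. Setting $f:=F-u$ gives a holomorphic function with $f(w_j)=a_j$ and $f\in A^2_\alpha(\mathbb{B}^n)$, since $F\in L^2_\alpha$ by the same sum and $u\in L^2_\alpha$ because $e^{-\varphi}$ is bounded below on compact subsets of $\mathbb{B}^n\setminus\Gamma$ while the near-pole contribution is controlled by the forced high-order vanishing of $u$.

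The principal obstacle is the construction of $\varphi$: one must simultaneously achieve (global) plurisubharmonicity of the singular sum, local finiteness, a uniform upper bound on $e^{-\varphi}$ on $\bigcup_j\Sigma_j$, and a lower curvature bound $i\partial\bar\partial\varphi\gtrsim g_B$ there. This is the standard and delicate weight construction in $\bar\partial$-interpolation proofs, and it is precisely what dictates the size condition $\tau\ge C_n/\sqrt{1-\alpha}$.
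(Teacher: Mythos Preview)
Your scheme is exactly the paper's: a smooth interpolant $F=\sum_j a_j\chi(d_j/\tau)$ corrected by a $\bar\partial$-solution $u$ forced to vanish at each $w_j$ by a weight with $2n$-fold logarithmic poles there. What you leave open---and correctly flag as the ``principal obstacle''---is precisely the point that carries the argument, and your account of why $\tau\ge C_n/\sqrt{1-\alpha}$ enters is not quite the paper's mechanism.

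The paper resolves the weight construction as follows. With $d_j(z)=d(z,w_j)$ the Bergman distance, set
\[
\psi(z)=\sum_j \chi(d_j(z)/\tau)\,\log\bigl(d_j(z)/\tau\bigr),\qquad
\varphi(z)=-\tfrac{1-\alpha}{2}\log(1-|z|^2)+2n\,\psi(z).
\]
Because the Bergman metric has negative sectional curvature, Greene--Wu gives that $\log d_j$ (hence $d_j$) is plurisubharmonic; expanding $\partial\bar\partial\psi$ one finds that the truncation produces only a bounded negative error, $i\partial\bar\partial\psi\ge -\tfrac{C_n^2}{8n\tau^2}\,ds^2_{\mathbb B^n}$. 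The extra term $-\tfrac{1-\alpha}{2}\log(1-|z|^2)$ contributes exactly $\tfrac{1-\alpha}{2}\,ds^2_{\mathbb B^n}$ to the Hessian, so the hypothesis $\tau\ge C_n/\sqrt{1-\alpha}$ is what makes $\varphi$ plurisubharmonic with $i\partial\bar\partial\varphi\ge\tfrac{1-\alpha}{4}\,ds^2_{\mathbb B^n}$. This is where the $\tau$-condition is actually used---to make the singular weight psh---rather than to cancel a $1/(1-\alpha)$ constant coming from Theorem~\ref{th:main2}.

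A second point: since $e^{-\varphi}$ already carries the factor $(1-|z|^2)^{(1-\alpha)/2}$, the paper applies Theorem~\ref{th:main} with the boundary exponent $\tfrac{1+\alpha}{2}$ rather than $\alpha$; the product $e^{-\varphi}(1-|z|)^{-(1+\alpha)/2}$ then dominates $(1-|z|)^{-\alpha}$ (using $\psi\le 0$), which is how one reads off $u\in L^2_\alpha$ directly. Your proposed $G_j=\log|\phi_{w_j}|^2$ is a perfectly good substitute for $\log d_j$ (it is the pluricomplex Green function, hence psh and $\le 0$), but without the compensating term $-\tfrac{1-\alpha}{2}\log(1-|z|^2)$ you have no mechanism to absorb the negative curvature created by the truncation, and the weight fails to be psh. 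Once you insert that term, the rest of your estimates (the $\Sigma_j$-integrals, the identity $a_j^2(1-|w_j|)^{n+1-\alpha}=(1-|w_j|)^n(\log\tfrac{1}{1-|w_j|})^{-1-\varepsilon}$, and the appeal to Lemma~\ref{lem:ball1}) match the paper exactly.
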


\begin{proof} Take a $C^\infty$ cut-off function $\chi:{\mathbb R}\rightarrow [0,1]$ such that $\chi|_{(-\infty,1/4)}=1$, $\chi|_{(1/2,+\infty)}=0$ and $\chi'\le 0$. Put $d_j(z)=d(z,w_j)$ and
\begin{eqnarray*}
\psi(z) & = & \sum_j \chi(d_j(z)/\tau)\log d_j(z)/\tau\\
\varphi(z) & = & -\frac{1-\alpha}2\log(1-|z|^2)+2n\psi(z).
\end{eqnarray*}
A straightforward calculation shows
\begin{eqnarray}
\partial\bar{\partial} \psi & = & \sum_j \chi''(\cdot)\frac{\partial d_j\wedge \bar{\partial}d_j}{\tau^2} \log d_j/\tau+2\chi'(\cdot)\frac{\partial d_j\wedge \bar{\partial}d_j}{\tau d_j}\nonumber\\
&& +\chi'(\cdot)\frac{\partial\bar{\partial}d_j}\tau \log d_j/\tau+\chi(\cdot)\partial\bar{\partial}\log d_j.
\end{eqnarray}
Since $ds^2_{{\mathbb B}^n}$ has negative Riemannian sectional curvature, it follows from \cite{GreenWu79} that $\log d_j$ is psh (so is $d_j$) on ${\mathbb B}^n$. Neglecting the last two semipositive terms in (8), we get
$$
\partial\bar{\partial}\psi\ge -\frac{C_n^2}{8n\tau^2}ds^2_{{\mathbb B}^n}
$$
for suitable constant $C_n>0$. If $\tau\ge C_n/\sqrt{1-\alpha}$, then
$$
\partial\bar{\partial}\varphi\ge \frac{1-\alpha}{4} ds^2_{{\mathbb B}^n}.
$$
By Theorem 1.1, we may solve the equation
$$
\bar{\partial}u=\sum_j (1-|w_j|)^{-\frac{1-\alpha}2}\left(\log \frac1{1-|w_j|}\right)^{-\frac{1+\varepsilon}2}\bar{\partial}\chi(d_j/\tau)=:v
$$
such that
\begin{eqnarray*}
 && \int_{{\mathbb B}^n} |u|^2 e^{-\varphi} (1-|z|)^{-\frac{1+\alpha}2}dV  \le  {\rm const}_{n,\alpha} \int_{{\mathbb B}^n} |v|^2_{i\partial\bar{\partial}\varphi}e^{-\varphi}(1-|z|)^{-\frac{1+\alpha}2}dV\\
& \le & {\rm const}_{n,\alpha,\tau} \sum_j (1-|w_j|)^{-1+\alpha}\left(\log \frac1{1-|w_j|}\right)^{-1-\varepsilon}\int_{ B_\tau(w_j)}(1-|z|)^{-\alpha}dV\\
& \le & {\rm const}_{n,\alpha,\tau} \sum_{j=1}^\infty\frac{(1-|w_j|)^n}{\left(\log \frac1{1-|w_j|}\right)^{1+\varepsilon}}<\infty
\end{eqnarray*}
where the last inequality follows from Proposition $7.1/(1)$. To get the desired function, we only need to take
$$
f:=\sum_j \chi(d_j/\tau)(1-|w_j|)^{-\frac{1-\alpha}2}\left(\log \frac1{1-|w_j|}\right)^{-\frac{1+\varepsilon}2}-u.
$$
Q.E.D.
\end{proof}

Now we prove Theorem~\ref{th:ball}. Take $\tau=C_n/\sqrt{1-\alpha}$ as in Lemma~\ref{lem:ball2}. Pick a maximal $2\tau-$separated sequence $\Gamma=\{w_j\}_{j=1}^\infty$ with $0\notin \Gamma$. It is easy to see that the geodesic balls ${B}_\tau(w_j)$ are disjoint and $\{{B}_{3\tau}(w_j)\}_{j=1}^\infty$ forms a cover of ${\mathbb B}^n$. In particular,
$$
{B}_{4\tau}(w)\cap \Gamma\neq \emptyset,\ \ \ \ \ \forall\,w\in {\mathbb B}^n.
$$
By Proposition $7.1/(3)$ and completeness of $ds^2_{{\mathbb B}^n}$, we conclude that there is a constant $t>1$ such that for each $\zeta\in {\mathbb S}^n$, the set ${\mathcal A}_{t}(\zeta)$ contains a sequence of disjoint geodesic balls of radius $4\tau$ whose centers approach $\zeta$. Consequently, this set contains a subsequence of $\Gamma$. On the other hand, there is a function $f\in A^2_\alpha({\mathbb B}^n)$ such that
$$
f(w_j)=(1-|w_j|)^{-\frac{1-\alpha}2}\left(\log \frac1{1-|w_j|}\right)^{-\frac{1+\varepsilon}2},\ \ \ \forall\,j
$$
by virtue of Lemma~\ref{lem:ball2}. Thus the proof is complete. Q.E.D.

\section{Proof of Theorem~\ref{th:vanish}, \ref{th:hyperconvex}}
Let $dz=dz_1\wedge \cdots \wedge dz_n$ and $\widehat{d\bar{z}_j}=d\bar{z}_1\wedge \cdots \wedge d\bar{z}_{j-1}\wedge d\bar{z}_{j+1}\wedge \cdots \wedge d\bar{z}_n$. The Bochner-Martinelli kernel is defined to be
$$
K_{BM}(\zeta-z)=\frac{(n-1)!}{(2\pi i)^n}\sum_{j=1}^n \frac{(-1)^{j-1}(\bar{\zeta}_j-\bar{z}_j)}{|\zeta-z|^{2n}} \widehat{d\bar{\zeta}_j}\wedge d\zeta.
$$

\textbf{Bochner-Martinelli Formula.} {\it Let $D\subset {\mathbb C}^n$ be a bounded domain with $C^1-$boundary. Let $f\in C^1(\overline{D})$. Then for each $z\in D$,}
$$
f(z)=\int_{\partial D}f(\zeta) K_{BM}(\zeta-z)-\frac{(n-1)!}{(2\pi i)^n} \int_D \sum_{j=1}^n (\bar{\zeta}_j-\bar{z}_j)\frac{\partial f}{\partial \bar{\zeta}_j}\frac{d\bar{\zeta}\wedge d\zeta}{|\zeta-z|^{2n}}.
$$

First we prove Theorem~\ref{th:vanish}. Without loss of generality, we assume that the diameter $d(\Omega)$ of $\Omega$ is less than $1/2$.

(a) Put $\delta(z):=d(z,\partial \Omega),\,z\in {\mathbb C}^n$. Clearly,
$
|\delta(z)-\delta(w)|\le |z-w|
$
for all $z,w\in {\mathbb C}^n$. To apply the B-M formula, we need to approximate $\delta(z)$ first by $C^1-$smooth functions with uniformly bounded gradients by a standard argument as follows. Let $\kappa\ge 0$ be a $C^\infty$ function in ${\mathbb C}^n$ satisfying the following properties: $\kappa$ depends only on $|z|$, ${\rm supp\,}\kappa\subset {\mathbb B}^n$ and $\int_{{\mathbb C}^n}\kappa(z)dV=1$. For each $\varepsilon>0$, we put $\kappa_\varepsilon(z)=\varepsilon^{-2n}\kappa(z/\varepsilon)$ and $\delta_\varepsilon=\delta\ast \kappa_\varepsilon$. Clearly, $\delta_\varepsilon$ converges uniformly on $\overline{\Omega}$ to $\delta$, and the gradient $\bigtriangledown \delta_\varepsilon$ of $\delta_\varepsilon$ verifies
$$
\bigtriangledown \delta_\varepsilon(z)=\int_{{\mathbb C}^n}\delta(\zeta)\bigtriangledown_z \kappa_\varepsilon(\zeta-z)dV_\zeta=\int_{{\mathbb C}^n}(\delta(\zeta)-\delta(z))\bigtriangledown_z\kappa_\varepsilon(\zeta-z)dV_\zeta
$$
because $\int_{{\mathbb C}^n}\kappa_\varepsilon(\zeta-z)dV_\zeta=1$. Thus
$$
\left|\bigtriangledown\delta_\varepsilon(z)\right|\le \int_{{\mathbb C}^n}|\delta(\zeta)-\delta(z)|\cdot\left|\bigtriangledown_z \kappa_\varepsilon(\zeta-z)\right|dV_\zeta\le {\rm const}_n.
$$
Let $f\in {\mathcal O}(\Omega)$ and $z_0\in \Omega$ arbitrarily fixed. For any sufficiently small $\varepsilon>0$, there is a positive number $\varepsilon_1$ such that
$$
\left\{z\in \Omega:\varepsilon\le \delta_{\varepsilon_1}(z)\le \sqrt{\varepsilon}\right\}\subset \Omega_{\frac{\varepsilon}2}\backslash \Omega_{2\sqrt{\varepsilon}}
$$
and $\delta_{\varepsilon_1}\asymp \delta_\Omega$ holds on $\Omega_{\frac{\varepsilon}2}\backslash \Omega_{2\sqrt{\varepsilon}}$ (with implicit constants independent of $\varepsilon,\varepsilon_1$).
Now take a cut-off function $\chi$ on ${\mathbb R}$ such that $\chi|_{(-\infty,-\log 2)}=1$ and $\chi|_{(0,\infty)}=0$. Applying the B-M formula to the function
$$
\chi(\log \log 1/\delta_{\varepsilon_1}-\log\log 1/\varepsilon)f^2
$$
 with $\varepsilon$ sufficiently small, we obtain
$$
f^2(z_0)=-\frac{(n-1)!}{(2\pi i)^n} \int_\Omega \frac{f^2(\zeta)\chi'(~\cdot~)}{\delta_{\varepsilon_1}(\zeta)\log \delta_{\varepsilon_1}(\zeta)}\sum_{j=1}^n (\bar{\zeta}_j-\bar{z}_{0,j})\frac{\partial \delta_{\varepsilon_1}}{\partial \bar{\zeta}_j}(\zeta)\frac{d\bar{\zeta}\wedge d\zeta}{|\zeta-z_0|^{2n}}.
$$
Thus
$$
|f(z_0)|^2 \le  {\rm const}_{n,z_0} \int_{\Omega_{\frac{\varepsilon}2}\backslash \Omega_{2\sqrt{\varepsilon}}}|f|^2\delta_\Omega^{-1}|\log \delta_\Omega|^{-1} dV\rightarrow 0\ \ \ (\varepsilon\rightarrow 0+)
$$
provided
$$
\int_{\Omega}|f|^2\delta_\Omega^{-1}|\log \delta_\Omega|^{-1} dV<\infty.
$$

(b) Recall first that for each compact set $M \subset \Omega$, the capacity of $M$ in $\Omega$ is defined by
$$
{\rm cap}\left(M,\Omega\right)=\inf \int_\Omega \left|\bigtriangledown\phi\right|^2 dV
$$
where the infimum is taken over all $\phi\in C^\infty_0(\Omega)$ such that $0\le \phi\le 1$ and $\phi=1$ in a neighborhood of $M$. For each $j$, we may choose a function $\phi_j\in C_0^\infty(\Omega)$ with $0\le \phi_j\le 1$, $\phi_j=1$ in a neighborhood of $\overline{\Omega}_{\varepsilon_j}$, so that
$$
\int_\Omega \left|\bigtriangledown\phi_j\right|^2 dV\le 2c(\varepsilon_j).
$$
Let $f\in A^2_\alpha(\Omega)$ and $z_0\in \Omega$ arbitrarily fixed. Applying the B-M formula to the function $\phi_j f$ with $j$ sufficiently large, we get
$$
f(z_0)=-\frac{(n-1)!}{(2\pi i)^n} \int_\Omega f(\zeta)\sum_{k=1}^n (\bar{\zeta}_k-\bar{z}_{0,k})\frac{\partial \phi_j}{\partial \bar{\zeta}_k}(\zeta)\frac{d\bar{\zeta}\wedge d\zeta}{|\zeta-z_0|^{2n}}
$$
so that
\begin{eqnarray*}
|f(z_0)| & \le & {\rm const}_{n,z_0} \int_\Omega \left|\bigtriangledown \phi_j\right| |f|\,dV\\
& \le & {\rm const}_{n,z_0} \left(\int_{\Omega\backslash \Omega_{\varepsilon_j}} \left|\bigtriangledown \phi_j\right|^2\delta_\Omega^\alpha\, dV\right)^{1/2} \left(\int_{\Omega\backslash \Omega_{\varepsilon_j}} |f|^2\delta_\Omega^{-\alpha} dV\right)^{1/2}\\
& \le & {\rm const}_{n,z_0}\, c(\varepsilon_j)^{1/2}\varepsilon_j^{\alpha/2} \left(\int_{\Omega\backslash \Omega_{\varepsilon_j}} |f|^2\delta_\Omega^{-\alpha}dV\right)^{1/2}\rightarrow 0
\end{eqnarray*}
as $j\rightarrow \infty$. Q.E.D.

\medskip

On the other side, we have

\begin{proposition}\label{prop:bound}
 Let $\Omega\subset {\mathbb C}^n$ be a bounded domain and put $V(\varepsilon)={\rm vol}_E\left(\Omega\backslash \Omega_\varepsilon\right)$. If
 $$
 \alpha<\liminf_{\varepsilon\rightarrow 0+}\frac{\log V(\varepsilon)}{\log \varepsilon},
 $$
  then $H^\infty(\Omega)\subset A^2_\alpha(\Omega)$.
  \end{proposition}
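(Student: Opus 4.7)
The plan is to reduce the claim to showing $\int_\Omega \delta_\Omega^{-\alpha}\,dV<\infty$, since for $f\in H^\infty(\Omega)$ one has immediately
$$
\int_\Omega |f|^2\delta_\Omega^{-\alpha}\,dV\le \|f\|_\infty^2\int_\Omega \delta_\Omega^{-\alpha}\,dV.
$$
If $\alpha\le 0$ then $\delta_\Omega^{-\alpha}$ is bounded on $\Omega$ (as $\Omega$ is bounded), so there is nothing to prove; hence I may assume $\alpha>0$.

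Next I would unwind the $\liminf$ hypothesis. Pick any $\beta$ with $\alpha<\beta<\liminf_{\varepsilon\to 0+}\log V(\varepsilon)/\log\varepsilon$. By definition there exists $\varepsilon_0>0$ such that $\log V(\varepsilon)/\log\varepsilon\ge\beta$ for all $\varepsilon\in(0,\varepsilon_0]$; since $\log\varepsilon<0$ on this range, multiplying through flips the inequality to give
$$
V(\varepsilon)\le \varepsilon^\beta,\qquad \varepsilon\in(0,\varepsilon_0].
$$

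Finally, decompose $\Omega$ into the interior piece $\Omega_{\varepsilon_0}$ and the dyadic boundary shells $S_j=\{z\in\Omega:2^{-j-1}\le \delta_\Omega(z)<2^{-j}\}$ for $j\ge j_0$, where $j_0$ is chosen so that $2^{-j_0}\le\varepsilon_0$. On $\Omega_{\varepsilon_0}$ the integrand is bounded. On $S_j$ we have $\delta_\Omega^{-\alpha}\le 2^{(j+1)\alpha}$ and ${\rm vol}_E(S_j)\le V(2^{-j})\le 2^{-j\beta}$, whence
$$
\int_\Omega \delta_\Omega^{-\alpha}\,dV\le {\rm const}_\Omega + \sum_{j\ge j_0} 2^{(j+1)\alpha}\cdot 2^{-j\beta}= {\rm const}_\Omega + 2^\alpha \sum_{j\ge j_0} 2^{-j(\beta-\alpha)},
$$
and the geometric series converges because $\beta-\alpha>0$.

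I do not anticipate a real obstacle: the argument is elementary once the $\liminf$ condition is converted to the polynomial bound $V(\varepsilon)\le \varepsilon^\beta$. The only minor subtlety is getting the sign right when rearranging the logarithmic inequality, which is handled by observing $\log\varepsilon<0$ for small $\varepsilon$. An equivalent route is to write the layer-cake formula $\int_\Omega \delta_\Omega^{-\alpha}\,dV=\alpha\int_0^\infty V(s)s^{-\alpha-1}\,ds$ and split at $\varepsilon_0$, which yields the same conclusion via $\int_0^{\varepsilon_0} s^{\beta-\alpha-1}\,ds<\infty$.
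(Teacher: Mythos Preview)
Your proof is correct and follows essentially the same route as the paper: reduce to $\int_\Omega \delta_\Omega^{-\alpha}\,dV<\infty$, convert the $\liminf$ hypothesis into a bound $V(\varepsilon)\le C\varepsilon^\beta$ for some $\beta>\alpha$, and sum over dyadic shells $\{2^{-j-1}\le\delta_\Omega<2^{-j}\}$. The only cosmetic difference is that the paper normalizes to $\delta_\Omega<1$ and absorbs the small-$\varepsilon$ restriction into a constant so the bound holds for all $\varepsilon>0$, whereas you split off the interior piece $\Omega_{\varepsilon_0}$ explicitly.
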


\begin{proof}  It suffices to show that $1\in A^2_\alpha(\Omega)$. Fix $\beta$ such that $\alpha<\beta<\liminf_{\varepsilon\rightarrow 0+}\frac{\log V(\varepsilon)}{\log \varepsilon}$. Note that
$$
{\rm vol}_{E}(\Omega\backslash \Omega_\varepsilon)<{\rm const}_\beta\,\varepsilon^\beta
$$
for all $\varepsilon>0$. Without loss of generality, we assume $\delta_\Omega<1$ on $\Omega$ and $\alpha\ge 0$. Then we have
\begin{eqnarray*}
\int_\Omega \delta_\Omega^{-\alpha} dV & \le & \sum_{j=0}^\infty \int_{\Omega_{2^{-j-1}}\backslash \Omega_{2^{-j}}}2^{\alpha(j+1)}dV\le \sum_{j=0}^\infty 2^{\alpha(j+1)}{\rm vol}_{E}(\Omega\backslash \Omega_{2^{-j}})\\
&\le & {\rm const}_{\alpha,\beta} \sum_{j=0}^\infty 2^{-(\beta-\alpha)j}<\infty.
\end{eqnarray*}
Q.E.D.
\end{proof}

\medskip

It is reasonable to introduce the following

\begin{definition}
Let $\Omega$ be a bounded domain in ${\mathbb C}^n$. The critical exponent $\alpha(\Omega)$ of $\Omega$ for weighted Bergman spaces $A^2_\alpha(\Omega)$ is defined to be
$$
\alpha(\Omega):=\sup\left\{\alpha: A^2_\alpha(\Omega)\neq \{0\}\right\}=\inf\left\{\alpha: A^2_\alpha(\Omega)=\{0\}\right\}.
$$
\end{definition}

From Proposition 8.1 and Theorem~\ref{th:vanish}, we know that
$$
\beta(\Omega):=\liminf_{\varepsilon\rightarrow 0+}\frac{\log V(\varepsilon)}{\log \varepsilon}
\le \alpha(\Omega)\le \min\left\{1,\liminf_{\varepsilon\rightarrow 0+} \frac{\log c(\varepsilon)}{\log 1/\varepsilon}\right\}=:\gamma(\Omega).
$$
Note that $2n-\beta(\Omega)$ is nothing but the classical Minkowski dimension of $\partial \Omega$. Thus $\alpha(\Omega)=1$ in case $\partial \Omega$ is non-fractal, i.e., $\beta(\Omega)=1$. This is the case for instance, when $\Omega$ is a bounded domain in ${\mathbb C}^n$ with Lipschitz boundary or a domain in ${\mathbb C}$ whose boundary is a rectifiable Jordan curve. Unfortunately, the author is unable to find an example with $\alpha(\Omega)<1$.

\medskip

Finally we prove Theorem~\ref{th:hyperconvex}. Without loss of generality, we may assume that $\rho> -e^{-1}$ and $d(\Omega)\le 1/2$. Suppose on the contrary there is a continuous psh function $\rho<0$ on $\Omega$ such that
 $$
 -\rho\le {\rm const}_\varepsilon \delta_\Omega\left|\log \delta_\Omega\right|^{-\varepsilon}.
 $$
 Then we have
\begin{equation}
(-\rho)(-\log(-\rho))^{1+\varepsilon/2}\le {\rm const}_{\varepsilon}\delta_\Omega|\log \delta_\Omega|.
\end{equation}
By Richberg's theorem, we may also assume that $\rho$ is $C^\infty$ and strictly psh on $\Omega$. Fix $z_0\in \Omega$. Put $\phi=-\log(-\rho)$ and
   $$
\varphi(z)=2n\log |z-z_0|,\ \ \  \psi=\phi-\frac{\varepsilon}2 \log\phi.
$$
Note that $\bar{\partial}\psi=\bar{\partial}\phi-\frac{\varepsilon}2\frac{\bar{\partial}\phi}{\phi}$ and
$$
i\partial\bar{\partial} \psi=\left(1-\frac{\varepsilon}{2\phi}\right)i\partial\bar{\partial}\phi+\frac{\varepsilon}2\frac{i\partial\phi\wedge \bar{\partial}\phi}{\phi^2}\ge \left(1-\frac{\varepsilon}{2\phi}+\frac{\varepsilon}{2\phi^2}\right)i\partial\phi\wedge \bar{\partial}\phi,
$$
so that
\begin{equation}
|\bar{\partial} \psi|^2_{i\partial\bar{\partial}\psi}\le \frac{1-\frac{\varepsilon}{\phi}+\frac{\varepsilon^2}{4\phi^2}}{1-\frac{\varepsilon}{2\phi}+\frac{\varepsilon}{2\phi^2}}.
\end{equation}
Let $\chi$ be as in the proof of Theorem~\ref{th:vanish} and put $v=\bar{\partial}\chi(2|z-z_0|/\delta_\Omega(z_0)-1)$.
We need to solve the equation $\bar{\partial}u=v$ on $\Omega$ together with a Donnelly-Fefferman type estimate by using a trick from Berndtsson-Charpentier \cite{BerndtssonCharpentier00} essentially as \cite{Chen11}. Let $m>0$ be sufficiently large and $u_m$ the minimal solution of $\bar{\partial}u=v$ in $L^2(\Omega_{1/m},\varphi)$.  Then we have $u_m e^{\psi}\bot {\rm Ker\,}\bar{\partial}$ in $L^2(\Omega_{1/m},\varphi+\psi)$. Thus by H\"ormander's estimate (1.1),
\begin{eqnarray*}
\int_{\Omega_{1/m}} |u_m|^2 e^{-\varphi+\psi} dV & \le & \int_{\Omega_{1/m}} |\bar{\partial}(u_m e^{\psi})|^2_{i\partial\bar{\partial}(\varphi+\psi)} e^{-\varphi-\psi}dV\\
& \le & \int_{\Omega_{1/m}} |v+\bar{\partial}\psi\wedge u_m|^2_{i\partial\bar{\partial}\psi}e^{-\varphi+\psi}dV\\
& \le & \int_{\Omega_{1/m}} \left(1+\frac{4\phi}{\varepsilon}\right)|v|^2_{i\partial\bar{\partial}\psi}e^{-\varphi+\psi}dV+\int_{\Omega_{1/m}} \left(1+\frac{\varepsilon}{4\phi}\right)|\bar{\partial}\psi|^2_{i\partial\bar{\partial}\psi } |u_m|^2e^{-\varphi+\psi}dV.
\end{eqnarray*}
Together with (8.2), we get
\begin{equation}
\int_{\Omega_{1/m}} |u_m|^2 \phi^{-1} e^{-\varphi+\psi}dV\le {\rm const}_{\varepsilon}\int_{\Omega} \left(1+\frac{4\phi}{\varepsilon}\right)|v|^2_{i\partial\bar{\partial}\psi}e^{-\varphi+\psi}dV<\infty,
\end{equation}
for we can make $\phi$ sufficiently large if $\rho$ is replaced by $\rho/C$ with $C\gg 1$.

Now put $f_m(z):=\chi(2|z-z_0|/\delta_\Omega(z_0)-1)-u_m(z)$. Let $f$ be a weak limit of $\{f_m\}_{m=1}^\infty$. Clearly, $f\in {\mathcal O}(\Omega)$, $f(z_0)=1$ and by (8.1), (8.3),
$$
\int_\Omega |f|^2\delta_\Omega^{-1}|\log\delta_\Omega|^{-1}dV\le {\rm const}_\varepsilon \int_\Omega |f|^2 \phi^{-1} e^{\psi}dV<\infty.
$$
This contradicts with Theorem~\ref{th:vanish}. Q.E.D.

\medskip

\textbf{Acknowledgement.} The author thanks Dr. Xu Wang for pointing out several inaccuracies in a draft of this paper. He also thanks the referee and Professor Peter Pflug for valuable comments.

\end{document}